\tikzstyle arrowstyle=[scale=1]
\tikzstyle directed=[postaction={decorate,decoration={markings,
    mark=at position .55 with {\arrow[arrowstyle]{stealth}}}}]
\tikzstyle ddirected=[postaction={decorate,decoration={markings,
    mark=at position .45 with {\arrow[arrowstyle]{stealth}},
    mark=at position .55 with {\arrow[arrowstyle]{stealth}}}}]
\tikzstyle reverse directed=[postaction={decorate,decoration={markings,
    mark=at position .55 with {\arrowreversed[arrowstyle]{stealth};}}}]
\tikzstyle reverse ddirected=[postaction={decorate,decoration={markings,
    mark=at position .55 with {\arrowreversed[arrowstyle]{stealth};},
    mark=at position .65 with {\arrowreversed[arrowstyle]{stealth};}}}]
\newcommand{\co}{\colon \thinspace}
\newcommand\be{\mathbf{e}}
\newcommand\bx{\mathbf{x}}
\newcommand\by{\mathbf{y}}
\newcommand\bu{\mathbf{u}}
\newcommand\bv{\mathbf{v}}
\newcommand\bw{\mathbf{w}}
\newcommand\bz{\mathbf{z}}
\theoremstyle{plain}
\newtheorem{para}{}[section]
\newtheorem{proposition}[para]{Proposition}
\newtheorem{lemma}[para]{Lemma}
\newtheorem{fact}[para]{Fact}
\theoremstyle{definition}
\newtheorem{definition}[para]{Definition}
\theoremstyle{remark}
\newtheorem*{remark}{Remark}
\newtheorem{remarks}{Remarks}
\begin{document}

\title[Trig of truncated triangles \& tetrahedra]{Trigonometry of partially truncated\\ hyperbolic triangles and tetrahedra}
\author{Jason DeBlois}
\address{Department of Mathematics, University of Pittsburgh}
\email{jdeblois@pitt.edu}

\begin{abstract} The first main results of this note establish forms of the hyperbolic laws of cosines and sines for certain classes of quadrilaterals and pentagons in the hyperbolic plane, having at least one ideal vertex and right angles at non-ideal vertices, in which the length of a horocyclic cross-section at an ideal vertex plays the role filled by the dihedral angle in the usual versions of these laws. The second set of main results concern transversal length, meaning the distance from a designated internal edge to its opposite, of partially truncated tetrahedra in three-dimensional hyperbolic space whose non-truncated vertices are ideal. Transversal lengths of such tetrahedra are proved to depend only on the entire collection of internal edge lengths (interpreted at ideal vertices in terms of horospherical cross-sections), and bounds on these lengths are established. The case of ideal tetrahedra (no truncated vertices) is also considered. All main results are established using the unifying perspective of the hyperboloid model and Lorentzian geometry. A thorough introduction to this perspective is provided, with references as appropriate.
\end{abstract}

\maketitle

This paper proves a family of trigonometric results about truncated and partially truncated hyperbolic triangles (in $\mathbb{H}^2$) and tetrahedra (in $\mathbb{H}^3$). Here in the two-dimensional setting  we produce an $n$-gon, for $n=4$, $5$ or $6$, by removing open neighborhoods of one, two, or all three vertices, respectively, of a triangle. Geometrically, we require each edge resulting from truncation to be at right angles to the others that it intersects. Similarly, a partially truncated \emph{tetrahedron} is obtained by truncating some or all vertices of a tetrahedron; and geometrically, we require each triangular face resulting from truncation to intersect other faces at right angles.

The results of Section \ref{PT} give lower bounds, or in some cases formulas, for transversal lengths of partially truncated tetrahedra whose non-truncated vertices are ideal.  A \textit{transversal length} of such a tetrahedron is the minimum distance between a specified pair of opposite \emph{internal} edges, meaning edges not produced by truncation. In order to capture the flavor of these results we restate the final one here, in slightly more accessible form. It pertains to (fully) ideal tetrahedra.

\theoremstyle{plain}
\newtheorem*{IdealTetTransProp}{Proposition \ref{ideal tet trans}}
\begin{IdealTetTransProp} For an ideal tetrahedron $\Delta\subset\mathbb{H}^3$ and a choice of horoballs $B_1,B_2,B_3,B_4$, one centered at each ideal vertex of $\Delta$, let $d_{ij}$ be the signed distance between $B_i$ and $B_j$ for each $i<j$ (with a negative sign if the horoballs overlap). If $\tilde\lambda_{12}$ is the geodesic joining the center of $B_1$ to that of $B_2$, and $\tilde\lambda_{34}$ is the geodesic joining the centers of $B_3$ to $B_4$, the length of the transversal of $\Delta$ joining $\tilde\lambda_{12}$ to $\tilde\lambda_{34}$ is given by
\[ \cosh T_4(x,y;a,b,c,d) = \frac{\sqrt{ad} + \sqrt{bc}}{\sqrt{xy}}, \]
where $x = e^{d_{12}}$, $y = e^{d_{34}}$, $a = e^{d_{13}}$, $b = e^{d_{14}}$, $c = e^{d_{23}}$, and $d = e^{d_{24}}$.
\end{IdealTetTransProp}

One sees from this that the transversal length depends only on the internal edge lengths, that it decreases with the length of either of the internal edges that it joins, and that it increases with the other four internal edge lengths. It is also invariant under swapping the index $B_1$ with $B_2$, and/or $B_3$ with $B_4$---these preserve the roles of $\tilde\lambda_{12}$ and $\tilde\lambda_{13}$ and act on the set $\{a,b,c,d\}$ of inputs by even involutions. Finally, it is invariant under changing the choice of horoball centered at any ideal vertex of $\Delta$, which rescales the distances along edges incident to that vertex, cf.~Remarks \ref{rescale horoball}. Thus although computing the transversal length requires choosing a set of horoballs, the length computed does not depend on the particular set chosen.

The prior results of Section \ref{PT}, which address transversal lengths of partially truncated tetrahedra with one, two, or three ideal vertices, as well as those of Section \ref{three} concerning \emph{fully} truncated tetrahedra in $\mathbb{H}^3$, have the same set of qualitative properties. In these results, the length of an edge joining two faces resulting from truncation, or the distance from a truncation plane to a horoball, replace the signed distance between horoballs as appropriate. In some cases, like that of fully truncated tetrahedra, we give lower bounds rather than explicit formulas for the transversal length, due to the complexity of the formulas involved. See eg.~Proposition \ref{tetra transversals}.

The results of Section \ref{three} here are applied in \cite{DeSha2} as one tool contributing to that paper's lower bounds on volumes of hyperbolic $3$-manifolds with totally geodesic boundary, and we expect those of Sections \ref{two} and \ref{PT} to be similarly useful in forthcoming work.

To prove these results we use the \textit{hyperboloid model} for $\mathbb{H}^n$, where it is taken as a subset of $\mathbb{R}^{n+1}$ equipped with the \textit{Lorentzian inner product}, a certain non positive-definite bilinear form. Vectors of the ambient $\mathbb{R}^{n+1}$ carry information about different objects of $\mathbb{H}^n$, depending on the sign of their self-pairing, and the Lorentzian inner product of two such vectors carries information about the hyperbolic distance between the objects the vectors encode. This property was exploited by Ratcliffe in \cite[Ch.~3]{Ratcliffe} to prove trigonometric formulas, and also in previous work, eg.~by Epstein--Penner \cite{EpstePen}. We describe it in Section \ref{background} and subsequently leverage it to encode partially truncated triangles using just three vectors, and truncated tetrahedra using just four, using the pairings of these vectors to prove our trigonometric formulas.

%Multiple versions of the hyperbolic laws of sines and cosines are proved for such polygons in Chapter VI of Fenchel's \textit{Elementary geometry in hyperbolic space} \cite{Fenchel}. 

In Section \ref{two} we prove a set of hyperbolic trigonometric laws for partially truncated triangles in $\mathbb{H}^2$ that we do not know of in the standard references for such results, eg.~written by Fenchel \cite{Fenchel} (Chapter VI there has a vast collection), nor by Beardon \cite{Beardon} or Ratcliffe \cite{Ratcliffe}. In the results that we prove, the length of a horospherical cross-section at an ideal vertex plays the role that would be played by the dihedral angle at an ordinary vertex, and the distance to the cross-section plays the role of edge length. Proposition \ref{queue} pertains to hyperbolic quadrilaterals, and \ref{pee} to pentagons, in which the non-truncated vertices are \textit{ideal} (see Section \ref{background} below). There is even a set of laws for ideal triangles, reproduced below, for which we do not know a full reference.

\newtheorem*{IdealTriProp}{Proposition \ref{ideal tri}}
\newcommand\IdealTri{Let $\Delta$ be an ideal triangle in $\mathbb{H}^2$ and $B_1,B_2,B_3$ be horoballs, one centered at each ideal vertex of $\Delta$.  For $i\in\{1,2,3\}$, let $\theta_i$ be the length of the horocyclic arc $S_i\cap \Delta$, where $S_i = \partial B_i$, and for each $i<j$ let $d_{ij}$ be the signed distance between $B_i$ and $B_j$ along the geodesic joining their centers (with a negative sign if the horoballs overlap). Then:\begin{itemize}
	\item (Law of Sines) $\displaystyle{ \frac{\theta_1}{e^{d_{23}}} = \frac{\theta_2}{e^{d_{13}}} = \frac{\theta_3}{e^{d_{12}}} }$
	\item (First Law of Cosines) $\displaystyle{ \theta_1 = \sqrt{\frac{e^{d_{23}}}{e^{d_{12}}e^{d_{13}}} } }$
	\item (Second Law of Cosines) $\displaystyle{ e^{d_{23}} = \frac{1}{\theta_2\theta_3} }$ [This was proved as Lemma 3.3 of \cite{HoffPur}.] \end{itemize} }
\begin{IdealTriProp}\IdealTri\end{IdealTriProp}

The names above reference the corresponding results for \emph{compact} hyperbolic triangles, see eg. \cite[Theorems 3.5.2, 3.5.3, 3.5.4]{Ratcliffe}. We are not aware of other references in the literature for the main results of Sections \ref{two}, \ref{three}, and \ref{PT}, save for the ``Second Law of Cosines'' above. Section 2 of \cite{FrigePet} also considers the geometry of partially truncated tetrahedra, but tracks them using dihedral angles and is focused on different questions; eg.~existence and moduli.

%%%%%%%%%%%%%%%%%%%%%%%%%%%%%%%%%%%
\section{Background: the meaning of vectors in the hyperboloid model}\label{background}
%%%%%%%%%%%%%%%%%%%%%%%%%%%%%%%%%%%

We begin by reviewing Ratcliffe's notation from Chapter 3 of \cite{Ratcliffe}, which we will generally follow in describing the hyperboloid model of hyperbolic space. The \textit{Lorentzian inner product} of $\bx = (x_1,\hdots,x_{n+1})$ and $\by=(y_1,\hdots,y_{n+1})\in\mathbb{R}^{n+1}$ is defined as
\[ \bx\circ\by = -x_1y_1+x_2y_2+\hdots+x_{n+1}y_{n+1}, \]
and $\bx\ne\mathbf{0}$ is said to be \textit{space-like}, \textit{light-like}, or \textit{time-like} respectively as $\bx\circ\bx$ is positive, zero, or negative.  The \textit{Lorentzian norm} of $\bx$ is $\|\bx\| = \sqrt{\bx\circ\bx}$, where the square root is taken to be positive, zero, or positive imaginary in the respective cases above.  The \textit{light cone} is the set of light-like vectors, and its \textit{interior} is the set of time-like vectors.  A time-like or light-like vector is \textit{positive} if its first entry is. We note that the following version of the Cauchy-Schwartz inequality follows from the usual one, see eg.~formula (1.0.2) of \cite{DeB_Delaunay}: 

\begin{fact}\label{CS} For positive vectors $\bx$ and $\by$ with $\bx\circ\bx\leq 0$ and $\by\circ\by\leq 0$, $\bx\circ\by \leq -\sqrt{(\bx\circ\bx)(\by\circ\by)}$, with equality if and only if they are linearly dependent. 
\end{fact}

The \textit{hyperboloid model} $\mathbb{H}^{n}$ of hyperbolic space is the set of positive vectors with Lorentzian norm $i$ in $\mathbb{R}^{n+1}$, equipped with the distance $d_H$ defined by
\[ \cosh d_H(\bu,\bv) = -\bu\circ\bv. \]
It follows from Fact \ref{CS} that this formula is well-defined. It is the distance function determined by the Riemannian metric on $\mathbb{H}^{n}$ given, at each $\bx\in\mathbb{H}^n$, by restricting the Lorentzian inner product to $T_{\bx}\mathbb{H}^{n} = \bx^{\perp}\doteq \{\bv\,|\,\bv\circ\bx = 0\}$.  (This restriction is positive-definite since $\bx$ is time-like, see \cite[Theorem 3.1.5]{Ratcliffe}.) The isometry group of $\mathbb{H}^n$ is the group $O^+(1,n)$ of matrices preserving the Lorentzian inner product and the sign of time-like vectors, see \cite[\S 3.1]{Ratcliffe}, acting on $\mathbb{H}^n$ by restriction.

Given $\bx \in \mathbb{H}^n$ and a \textit{unit} space-like vector $\by$, ie.~with $\by \circ \by = 1$, if $\by \in T_{\bx} \mathbb{H}^n$ (recall that this means $\bx\circ\by = 0$) then defining $\gamma_{\by}(t) = \cosh t\ \bx + \sinh t\,\by$ determines a (unit-speed) geodesic in $\mathbb{H}^n$ with $\gamma_{\by}(0) = \bx$ and $\gamma_{\by}'(0)=\by$. For an arbitrary $\by\in T_{\bx}\mathbb{H}^n$,\begin{align}\label{arbitrary}
	\gamma_{\by}(t) \doteq \cosh \left(\|\by\| t\right)\,\bx + \frac{1}{\|\by\|}\sinh \left(\|\by\| t\right)\, \by \end{align}
is a constant-speed geodesic with $\gamma_{\by}(0) = \bx$ and $\gamma_{\by}'(0)=\by$. (This can be directly checked.) The exponential map of $\mathbb{H}^n$ based at $\bx$, a diffeomorphism $T_{\bx}\mathbb{H}^n \to \mathbb{H}^n$, is then given by $\by\mapsto\gamma_{\by}(1)$.

The most useful feature of the hyperboloid model for us is that vectors of $\mathbb{R}^{n+1}$ which are not time-like also encode geometric features of $\mathbb{H}^n$.  

%%%%%%%%%%%%%%%%%%%%%%
\subsection{The meaning of light-like vectors}\label{light-like} Recall that $\bx\in\mathbb{R}^{n+1}$ is \textit{light-like} if $\bx\circ \bx = 0$. Any positive light-like vector $\bx$ is approached by a sequence of positive time-like vectors (for instance we can take $t\,\bx + (1-t)\,\be_1$ for $t$ approaching $1$ from below); hence its projective class $[\bx]$ in $\mathbb{R}P^n$ is approached by a sequence in the projectivization of $\mathbb{H}^n$. Conversely, the projectivization of the light cone is the frontier of the projectivization of $\mathbb{H}^n$ in $\mathbb{R}P^n$. In this sense we regard projectivized members of the light cone as \textit{ideal points} of $\mathbb{H}^n$.

Individual vectors in the positive light cone carry more specific information.

\begin{definition}\label{horosphere} The \textit{horosphere determined by} a positive light-like vector $\bx\in\mathbb{R}^{n+1}$ is $S = \{\bv\in\mathbb{H}^n\,|\,\bv\circ\bx = -1\}$.  The \textit{horoball bounded by $S$} is the set $B= \{\bv\in\mathbb{H}^n\,|\,\bv\circ\bx \geq -1\}$. We say that the projective class $[\bx]$ of $\bx$ is the \textit{ideal point} of $S$ or of $B$.
\end{definition}

A little multivariable calculus shows that the horosphere $S$ determined by a positive light-like vector $\bx\in\mathbb{R}^{n+1}$ is the smooth submanifold $f^{-1}(-1)$ of $\mathbb{H}^n$, where $f(\bu) = \bu\circ\bx$, and its tangent space at any $\bu_0\in S$ is $T_{\bu_0} S = \{\bv\in\mathbb{R}^{n+1}\,|\,\bv\circ\bu_0 = 0 = \bv\circ\bx\}$.  For any such $\bu_0$ one may check directly that the formula $F(\bv) = \bu_0 + \bv + \left(\frac{\bv\circ\bv}{2}\right)\bx$ defines a Riemannian isometry from $T_{\bu_0} S$, equipped with the restriction of the Lorentzian inner product, to $S\subset\mathbb{H}^n$.  Since the inner product's restriction is positive-definite on $T_{\bu_0} S$, this explicitly confirms the well known fact that $S$ is an isometrically embedded copy of the Euclidean space $\mathbb{R}^{n-1}$.  It also yields the following formula for the Euclidean distance $d_S(\bu_0,\bu_1)$ in $S$ between vectors $\bu_0$ and $\bu_1$:\begin{align}\label{euc dist}
	d_S(\bu_0,\bu_1) = \sqrt{-2(1+\bu_0\circ\bu_1)} \end{align}
To see this, set $F(\bv)$ equal to $\bu_1$ and solve for $\bv\circ\bv$ by taking the Lorentzian inner product of both sides with $\bu_0$.  Using the formula for $d_H(\bu_0,\bu_1)$ given above we obtain the comparison equation $d_S(\bu_0,\bu_1)/2 = \sinh (d_H(\bu_0,\bu_1)/2)$.  This implies in particular that the isometric embedding $F$ is proper; that is, $S$ has compact intersection with any compact set of $\mathbb{H}^n$.

\begin{lemma}\label{to the horoball!}  For $\bv\in\mathbb{H}^n$ and a positive light-like vector $\bx$, the signed hyperbolic distance $d$ from $\bv$ to the horosphere $S$ determined by $\bx$ satisfies $e^d = -\bv\circ\bx$, where the sign of $d$ is positive if $\bv$ lies outside the horoball $B$ bounded by $S$.  This distance is realized at $t = d$ on the geodesic
\[ \gamma(t) = e^{-t}\bv - \frac{\sinh t}{\bx\circ\bv} \bx = e^{-t}\bv + e^{-d}\sinh t\,\bx\quad \in\mathbb{H}^n, \]
which has $\gamma(0) = \bv$. We call $\gamma$ the \mbox{\rm geodesic through} $\bv$ \mbox{\rm in the direction of} $\bx$.\end{lemma}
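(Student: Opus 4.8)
The plan is to set $c = -\bv\circ\bx$ and argue in three stages: (i) the displayed $\gamma$ is a unit-speed geodesic through $\bv$; (ii) it meets $S$ at exactly one parameter value $t_0 = \ln c$; (iii) $|t_0|$ is the distance from $\bv$ to $S$, with the claimed sign. For (i), first note $c>0$: the reversed Cauchy--Schwarz inequality recalled in Section~\ref{background} gives $\bv\circ\bx\le -\sqrt{(\bv\circ\bv)(\bx\circ\bx)}=0$, with equality only if $\bv,\bx$ are linearly dependent, which is impossible as $\bv$ is time-like and $\bx$ light-like. Writing $\gamma(t)=e^{-t}\bv-\tfrac{\sinh t}{\bx\circ\bv}\bx$ and differentiating at $0$ gives $\gamma'(0)=-\bv+c^{-1}\bx=:\bw$, and $\bv\circ\bv=-1$, $\bx\circ\bx=0$, $\bv\circ\bx=-c$ yield $\bw\circ\bv=0$, $\bw\circ\bw=1$. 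Since $\cosh t\,\bv+\sinh t\,\bw=(\cosh t-\sinh t)\bv+c^{-1}\sinh t\,\bx=\gamma(t)$, the geodesic description in Section~\ref{background} identifies $\gamma$ as the unit-speed geodesic with $\gamma(0)=\bv$, $\gamma'(0)=\bw$; in particular $\gamma$ stays in $\mathbb{H}^n$.

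For (ii), from $\bw\circ\bx=-\bv\circ\bx+c^{-1}(\bx\circ\bx)=c$ one gets $\gamma(t)\circ\bx=\cosh t\,(\bv\circ\bx)+\sinh t\,(\bw\circ\bx)=-c\,e^{-t}$, so $\gamma(t)\in S$ (i.e. $\gamma(t)\circ\bx=-1$) exactly when $t=t_0:=\ln c=\ln(-\bv\circ\bx)$.

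Step (iii) is the one I expect to require the most care. Rather than appeal to a general first-variation principle, I would exploit the explicit Riemannian isometry $F(\bv')=\bu_0+\bv'+\tfrac12(\bv'\circ\bv')\,\bx$ of $T_{\bu_0}S$ onto $S$ from Section~\ref{light-like}, taking $\bu_0=\gamma(t_0)$. Rearranging the formula for $\gamma(t_0)$ and using $e^{t_0}=c$ gives $\bv=e^{t_0}\bu_0-\sinh t_0\,\bx$; hence $\bv\circ\bv'=0$ for all $\bv'\in T_{\bu_0}S$ (where $\bv'\circ\bu_0=\bv'\circ\bx=0$), while $\bv\circ\bx=-c$ and $-\bv\circ\bu_0=\cosh d_H(\bv,\bu_0)=\cosh t_0$ (the last equality being a one-line check from the distance formula applied to $\gamma(0)$ and $\gamma(t_0)$). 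Therefore, for every $\bu=F(\bv')\in S$,
\[ \cosh d_H(\bv,\bu)=-\bv\circ F(\bv')=\cosh t_0+\frac{c}{2}\,(\bv'\circ\bv')\ \ge\ \cosh t_0, \]
with equality iff $\bv'=0$, since the Lorentzian product is positive-definite on $T_{\bu_0}S$. So the distance from $\bv$ to $S$ equals $|t_0|$ and is realized uniquely at $\bu_0=\gamma(t_0)$. Finally, $\bv$ lies outside $B$ iff $\bv\circ\bx<-1$ iff $c>1$ iff $t_0>0$, and $\bv\in B$ iff $t_0\le 0$, so the signed distance is $d=t_0=\ln(-\bv\circ\bx)$, i.e. $e^d=-\bv\circ\bx$; substituting $\bx\circ\bv=-e^d$ into the first formula for $\gamma$ gives the second, $\gamma(t)=e^{-t}\bv+e^{-d}\sinh t\,\bx$. (Positivity of $\gamma(t)$, needed so that it lies on the correct sheet, is immediate from the geodesic description in (i).)
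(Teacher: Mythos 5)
Your proof is correct, but it takes a genuinely different route from the paper's. The paper locates the nearest point of $S$ to $\bv$ by applying Lagrange multipliers to the restriction of $\bu\mapsto\bu\circ\bv$ to $S$, solving for the unique critical point, and then certifying it as the global maximizer via a separate properness argument showing that $\bu\circ\bv\to-\infty$ as $\bu$ escapes compact subsets; only afterwards does it verify that the displayed curve is a geodesic, by checking $\gamma(t)\circ\gamma(t)\equiv -1$ and $\gamma''=\gamma$ and citing Ratcliffe. You instead recognize $\gamma$ at the outset as $\cosh t\,\bv+\sinh t\,\bw$ for the unit space-like vector $\bw=-\bv+c^{-1}\bx\in T_{\bv}\mathbb{H}^n$ (with $c=-\bv\circ\bx>0$ by the reversed Cauchy--Schwarz inequality), locate its single intersection with $S$ at $t_0=\ln c$, and then use the explicit isometric parametrization $F$ of $S$ from Section \ref{light-like} to obtain the exact identity $\cosh d_H(\bv,F(\bv'))=\cosh t_0+\tfrac{c}{2}(\bv'\circ\bv')$. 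That identity makes global minimality and uniqueness of the foot point immediate from positive-definiteness of the Lorentzian form on $T_{\bu_0}S$, so you dispense with both the critical-point classification and the escape-to-infinity estimate. The trade-off is that your argument leans on the bijectivity of $F$ onto $S$ (which the paper does assert, since $F$ is an isometry onto $S$), whereas the paper's Lagrange-multiplier template is reused nearly verbatim in Lemmas \ref{to the other horoball!} and \ref{to the hyperplane!}; for this particular statement your computation is the cleaner and more self-contained of the two.
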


\begin{remark} For any $t\in\mathbb{R}$, $\gamma'(t) = -\gamma(t) + e^{-d}e^t\bx$ is a linear combination of $\gamma(t)$ and $\bx$. Therefore by the discussion below Definition \ref{horosphere} it is normal to the horosphere through $\gamma(t)$ with ideal point $[\bx]$. 
\end{remark}

\begin{proof}  A vector $\bu\in\mathbb{R}^{n+1}$ lies in $S$ if and only if $\bu\circ\bu = -1$, so it lies in $\mathbb{H}^n$, and $\bu\circ\bx = -1$. By the theory of Lagrange multipliers, the restriction of $f(\bu)\doteq \bu\circ \bv$ to $S$ may attain a local extremum at $\bu\in S$ only if the gradient of $f$ at $\bu$ is a linear combination of the gradients of the constraint functions $g_1(\bu) \doteq \bu\circ\bx$ and $g_2(\bu)\doteq\bu\circ\bu$.  By a direct computation, $\nabla f(\bu) = \bar{\bv}$, $\nabla g_1(\bu) = \bar\bx$, and $\nabla g_2(\bu) = \bar\bu$, where $\bar\bv$ is obtained from $\bv$ by switching the sign of first entry, and similarly for the others.  It follows that at any local extremum of the restriction of $f$ to $S$, $\bv$ is a linear combination of $\bx$ and $\bu$.  

Since $\bv$, which is time-like, is not a multiple of $\bx$, which is light-like, this implies that we can express $\bu$ in terms of $\bv$ and $\bx$.  Upon plugging $\bu = a\bx + b\bv$ into the constraints and solving for $a, b\in\mathbb{R}$ we obtain the unique solution\begin{align}\label{at the horoball}
	\bu = \frac{1}{2}\left(1 - \frac{1}{(\bv\circ\bx)^2}\right)\bx -\frac{1}{\bv\circ\bx}\bv. \end{align}
The value of $f$ at $\bu$ is thus $\bu\circ\bv = \frac{1}{2}\left(\bv\circ\bx+\frac{1}{\bv\circ\bx}\right)$, so by the definition of the hyperbolic distance $d_H$ we have
\[ \cosh d_H(\bu,\bv) = \frac{1}{2}\left(-\bv\circ\bx + \frac{1}{-\bv\circ\bx}\right). \]
Therefore $e^{d_H(\bu,\bv)}$ is either $-\bv\circ\bx$ or its reciprocal, whichever is at least $1$ since $d_H(\bu,\bv)$ is non-negative.  If we take $d$ to be the \textit{signed} distance, with non-negative sign if $\bv$ is outside the horoball $B$, then by the definition of $B$ we have $e^d = -\bv\circ\bx$ in all cases.

We finally note that $d$ really is the (signed) distance from $\bv$ to $S$; that is, the unique critical point $\bu$ of $f$ described above is the global maximizer for the values of $f$ on $S$, so $d_H(\bx,\bu)$ is the global minimizer of distances from $\bv$ to points of $S$.  This follows from uniqueness and the fact that as $\bu\in\mathbb{H}^n$ escapes compact sets, $f(\bu)\to-\infty$.  Toward the latter point, note for an arbitrary $\bu = (u_1,\hdots,u_{n+1})\in\mathbb{H}^n$ that $u_1 = \sqrt{1+u_2^2+\hdots+u_{n+1}^2}$, so we can rewrite $f(\bu)$ as\begin{align*}
	f(\bu) & = -\sqrt{(1+u_2^2+\hdots+u_{n+1}^2)(1+v_2^2+\hdots+v_{n+1}^2)} + u_2v_2+\hdots+u_{n+1}v_{n+1} \\
	& = \frac{(u_2v_2+\hdots+u_{n+1}v_{n+1})^2 - (1+u_2^2+\hdots+u_{n+1}^2)(1+v_2^2+\hdots+v_{n+1}^2)}{\sqrt{(1+u_2^2+\hdots+u_{n+1}^2)(1+v_2^2+\hdots+v_{n+1}^2)} + u_2v_2+\hdots+u_{n+1}v_{n+1}}. \end{align*}
In passing from the first to the second line above we use the fact that $\sqrt{a}-\sqrt{b} = (a-b)/(\sqrt{a}+\sqrt{b})$.  Expanding the numerator, canceling certain terms, and rearranging yields:
\[ -1 - (u_2^2+\hdots+u_{n+1}^2) - (v_2^2 + \hdots + v_{n+1}^2) - \sum_{i\neq j} (u_i - v_j)^2. \]
The denominator is at most some fixed multiple of $\sqrt{1+u_2^2+\hdots+u_{n+1}^2}$, by the Cauchy-Schwarz inequality, whereas the numerator is at most the opposite of the square of this quantity.  So as claimed, $f(\bu)\to-\infty$ as $\bu$ escapes compact sets.

For the parametrized curve $\gamma$ defined in the statement, direct computation reveals that $\gamma(t)\circ\gamma(t) = -1$ for all $t$, so $\gamma$ maps into $\mathbb{H}^n$, and that $\gamma''(t) = \gamma(t)$.  Therefore $\gamma$ is a hyperbolic geodesic, by \cite[Theorem 3.2.4]{Ratcliffe}.  More direct computation shows that $\gamma(0) = \bv$ and $\gamma(d)$ is the nearest point $\bu$ to $\bv$ on $S$ described in (\ref{at the horoball}).
\end{proof}

\begin{lemma}\label{to the other horoball!}  For linearly independent positive light-like vectors $\bx_0$ and $\bx_1$ of $\mathbb{R}^{n+1}$, the minimum signed distance $d$ from points on $S_1$ to $S_0$ satisfies $e^d = -\frac{1}{2}\bx_0\circ\bx_1$, where $S_i$ is the horosphere of $\mathbb{H}^n$ determined by $\bx_i$ for $i=0,1$.  This distance is uniquely attained by points at $t = \pm d/2$ on the geodesic
\[ \gamma(t) = \frac{1}{\sqrt{-2(\bx_0\circ\bx_1)}}\left( e^t\,\bx_0 + e^{-t}\,\bx_1 \right) = \frac{1}{2}e^{-d/2} \left( e^t\,\bx_0 + e^{-t}\,\bx_1 \right) \]
from $\bx_1$ to $\bx_0$.\end{lemma}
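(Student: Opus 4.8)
The plan is to reduce the computation of the minimum distance to Lemma~\ref{to the horoball!} and then carry out a constrained-optimization argument paralleling the one used there. Write $c = \bx_0\circ\bx_1$; since $\bx_0$ and $\bx_1$ are positive, light-like, and linearly independent, the Lorentzian Cauchy--Schwarz inequality recalled in Section~\ref{background} gives $c<0$, so $-2c>0$ and every expression in the statement is well defined. For a point $\bv\in S_1$, Lemma~\ref{to the horoball!} identifies the signed distance from $\bv$ to $S_0$ as the number $d(\bv)$ with $e^{d(\bv)} = -\bv\circ\bx_0$. Since $x\mapsto e^x$ is increasing, minimizing $d(\bv)$ over $\bv\in S_1$ is equivalent to maximizing $f(\bv)\doteq\bv\circ\bx_0$ over $S_1$, so I would first show that this maximum is attained at a unique point.

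To see that $f|_{S_1}$ attains its maximum, note that the computation at the end of the proof of Lemma~\ref{to the horoball!} (applied with $\bx=\bx_0$ in place of the $\bx$ there) shows $f(\bv) = \bv\circ\bx_0\to-\infty$ as $\bv$ escapes compact subsets of $\mathbb{H}^n$; since $S_1$ is properly embedded in $\mathbb{H}^n$, as observed after Definition~\ref{horosphere}, it follows that $f|_{S_1}$ is bounded above and attains its supremum, necessarily at a critical point of the restriction. The Lagrange multiplier computation of Lemma~\ref{to the horoball!} then applies essentially verbatim, with constraint functions $\bu\mapsto\bu\circ\bu$ and $\bu\mapsto\bu\circ\bx_1$: at any such critical point $\bv$, the vector $\bx_0$ is a linear combination of $\bx_1$ and $\bv$, and since $\bx_0$ is not a multiple of the light-like vector $\bx_1$ this forces $\bv = a\bx_0+b\bx_1$ for some $a,b\in\mathbb{R}$. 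Substituting into $\bv\circ\bx_1 = -1$ and $\bv\circ\bv=-1$ and using $\bx_0\circ\bx_0 = \bx_1\circ\bx_1 = 0$ yields $ac=-1$ and $2abc=-1$, hence the unique solution $a=-1/c$, $b=1/2$; one checks directly that $\bv^*\doteq -\tfrac{1}{c}\bx_0+\tfrac{1}{2}\bx_1$ is a positive vector lying in $S_1\subset\mathbb{H}^n$, so it is the unique maximizer of $f|_{S_1}$. Since $f(\bv^*) = \bv^*\circ\bx_0 = \tfrac{1}{2}c$, the minimal value of $-\bv\circ\bx_0$ over $\bv\in S_1$ is $-\tfrac{1}{2}c$, so the minimum signed distance $d$ satisfies $e^d = -\tfrac{1}{2}\bx_0\circ\bx_1$, and in particular $e^{d/2} = \sqrt{-c/2} = \tfrac{1}{2}\sqrt{-2c}$, which reconciles the two displayed forms of $\gamma$.

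It then remains to identify the common perpendicular as the given geodesic $\gamma$. Direct computation using $\bx_0\circ\bx_0=\bx_1\circ\bx_1=0$ gives $\gamma(t)\circ\gamma(t) = -1$ for all $t$ (so $\gamma$ maps into $\mathbb{H}^n$, positivity being clear as $\gamma(t)$ is a positive combination of positive vectors) and $\gamma''(t)=\gamma(t)$, so $\gamma$ is a geodesic by \cite[Theorem~3.2.4]{Ratcliffe}; a further computation gives $\gamma'(t)\circ\gamma'(t)=1$, so $\gamma$ has unit speed, and its projective class tends to $[\bx_1]$ as $t\to-\infty$ and to $[\bx_0]$ as $t\to+\infty$, so it runs from $\bx_1$ to $\bx_0$. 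Evaluating, $\gamma(t)\circ\bx_1 = e^{t}c/\sqrt{-2c}$ and $\gamma(t)\circ\bx_0 = e^{-t}c/\sqrt{-2c}$, which equal $-1$ exactly at $t=-d/2$ and $t=d/2$ respectively; moreover $\gamma(-d/2)=\bv^*$. Hence the sub-arc of $\gamma$ from $t=-d/2$ (on $S_1$) to $t=d/2$ (on $S_0$) has signed length $d$ and realizes the minimum. For the uniqueness clause: $\bv^*$ is the unique point of $S_1$ nearest to $S_0$ by the previous paragraph, and by Lemma~\ref{to the horoball!} the point of $S_0$ nearest $\bv^*$ is unique and lies on the geodesic through $\bv^*$ in the direction of $\bx_0$, which a short computation identifies with $t\mapsto\gamma(t-d/2)$; so that nearest point is $\gamma(d/2)$. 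The sign convention on $d$ is inherited from Lemma~\ref{to the horoball!}.

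The main obstacle is the same one present in Lemma~\ref{to the horoball!}: promoting the Lagrange critical point to a genuine global minimizer, which needs the properness input, and then carefully matching the $S_1$-side uniqueness and the $S_0$-side uniqueness along the single geodesic $\gamma$. Everything else is routine bookkeeping with the Lorentzian inner product.
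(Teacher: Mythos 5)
Your overall strategy matches the paper's almost exactly: Lagrange multipliers on $S_1$ for $f(\bu)=\bu\circ\bx_0$, the unique critical point $\bv^*=-\tfrac{1}{c}\bx_0+\tfrac12\bx_1$, reduction to Lemma \ref{to the horoball!} for the distance and the geodesic, and a reparametrization to get the symmetric form. All of that checks out. But there is a genuine gap in the step you yourself flag as the main obstacle: promoting the critical point to a global maximizer. You assert that the computation at the end of the proof of Lemma \ref{to the horoball!} shows $\bv\circ\bx_0\to-\infty$ as $\bv$ escapes compact subsets of $\mathbb{H}^n$, and then invoke properness of $S_1\hookrightarrow\mathbb{H}^n$. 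That assertion is false. The computation in Lemma \ref{to the horoball!} is for $f(\bu)=\bu\circ\bv$ with $\bv$ \emph{time-like}; it uses $v_1=\sqrt{1+v_2^2+\cdots+v_{n+1}^2}$, and the $+1$ under that radical is what makes the numerator grow like $-(1+u_2^2+\cdots+u_{n+1}^2)$ while the denominator grows only like its square root. If you replace the time-like vector by the light-like $\bx_0$, the numerator is instead bounded (it equals $(\bu'\cdot\bx_0')^2-|\bu'|^2|\bx_0'|^2-|\bx_0'|^2\ge -$const when $\bu'$ is parallel to $\bx_0'$), and indeed $\bu\circ\bx_0\to 0^-$ as $\bu$ tends to the ideal point $[\bx_0]$ inside the horoball $B_0$; more simply, $\bu\circ\bx_0\equiv -1$ on all of the noncompact set $S_0$. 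So $\bu\mapsto\bu\circ\bx_0$ does \emph{not} tend to $-\infty$ on $\mathbb{H}^n$, and your attainment argument does not go through as written.

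What is true, and what you actually need, is that $f$ restricted to $S_1$ tends to $-\infty$; this requires an argument that uses $S_1$ (equivalently, that $[\bx_1]\ne[\bx_0]$), not just properness of the embedding. The paper does this by showing the superlevel sets are compact: for fixed $r<0$ and $\bv\in S_1$ with $f(\bv)\ge r$, expanding $\bv\circ\bv^*$ in terms of $\bv\circ\bx_0$ and $\bv\circ\bx_1=-1$ gives $\bv\circ\bv^*=-f(\bv)/(\bx_0\circ\bx_1)-1/2\ge -r/(\bx_0\circ\bx_1)-1/2$, which confines $\bv$ to a compact metric ball about $\bv^*$. Substituting that (or any correct proof that $\{f\ge r\}\cap S_1$ is compact) for your false claim repairs the proof; the remainder of your argument, including the identification of the two realizing points at $t=\pm d/2$ on $\gamma$ and the uniqueness bookkeeping, is correct.
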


\begin{proof}  A vector $\bu\in\mathbb{R}^{n+1}$ lies in $S_1$ if and only if $\bu\circ\bu = -1$, $\bu$ is positive, and $\bu\circ\bx_1 = -1$.  By the theory of Lagrange multipliers, the restriction of $f(\bu) = \bu\circ\bx_0$ to $B_1$ may attain a local extremum at $\bu\in S_1$ only if the gradient of $f$ at $\bu$ is a linear combination of the constraint gradients $\nabla g_1(\bu)$ and $\nabla g_2(\bu)$, where $g_1(\bu) = \bu\circ\bx_1$ and $g_2(\bu) = \bu\circ\bu$.  Direct computation yields $\nabla f(\bu) = \bar\bx_0$, $\nabla g_1(\bu) = \bar\bx_1$, and $\nabla g_2(\bu)=2\bar\bu$, where $\bar\bx_0$ is obtained from $\bx_0$ by multiplying the first entry by $-1$ and similarly for the others.  It thus follows that at such a local extremum $\bu$, $\bx_0$ is a linear combination of $\bx_1$ and $\bu$ so, since $\bx_0$ is not a multiple of $\bx_1$, $\bu$ is a linear combination of the $\bx_i$.

Plugging $\bu = a\bx_0 + b\bx_1$ into the constraint equations and solving for $a,b\in\mathbb{R}$ yields\begin{align}\label{at the other horoball}
	\bu = \frac{-1}{\bx_0\circ\bx_1}\bx_0 + \frac{1}{2}\bx_1 \end{align}
This is a positive vector since it is a positive linear combination of the positive vectors $\bx_0$ and $\bx_1$.  By Lemma \ref{to the horoball!} and a direct computation, the signed distance $d$ from $\bu$ to $S_0$ satisfies $e^d = -\frac{1}{2}\bx_0\circ\bx_1$.

Substituting $\bu$ for $\bv$ in the formula for the geodesic $\gamma(t)$ defined in Lemma \ref{to the horoball!} and simplifying yields
\[ \gamma(t) = \frac{e^t}{-\bx_0\circ\bx_1}\bx_0 + \frac{e^{-t}}{2}\bx_1. \]
Note that $\gamma(0) =\bu\in S_1$ and $\gamma(d)\in S_0$.  The more-symmetric formula given in the statement is obtained by translating the parametrization, replacing $t$ by $t+d/2$.

It remains to show for $\bu$ from the formula (\ref{at the other horoball}) that $f(\bu)$ is a global maximum of $f$ on $S_1$, hence that $d$ is a global minimum of the signed distance to $S_0$ on $S_1$.  This follows from the fact that $\bu$ is the unique critical point of $f$ on $S_1$, together with the fact that $f(\bv)\to-\infty$ as $\bv\in S_1$ escapes compact sets.  Indeed, for any fixed $r<0$, and any $\bv\in S_1$ such that $f(\bv) \geq r$, we have $\bv\circ\bu = -f(\bv)/\bx_0\circ\bx_1 - 1/2 \geq -r/\bx_0\circ\bx_1 - 1/2$, so $\bv$ is contained in the closed ball of radius $\cosh^{-1}(r/\bx_0\circ\bx_1+1/2)$ around $\bu$.  This ball is compact.\end{proof}

%%%%%%%%%%%%%%%%%%%%%%%
\subsection{The meaning of space-like vectors} Recall that $\by\in\mathbb{R}^{n+1}$ is \textit{space-like} if $\by\circ\by > 0$. We note that the orthogonal subspace $V = \{\bx\circ\by = 0\}$ to a space-like vector $\by$ is time-like, ie. containing a time-like vector, since if this were not so then $\mathbb{R}^{n+1}$ would have no time-like vectors. This motivates:

\begin{definition}\label{polar} The \textit{polar hyperplane} to a space-like vector $\by$ is $P = \{\bx\in\mathbb{H}^n\,|\, \bx\circ\by = 0\}$.\end{definition}

As defined in \cite[\S 3.2]{Ratcliffe}, a \textit{hyperplane} of $\mathbb{H}^n$ is its intersection with a time-like, codimension-one vector subspace of $\mathbb{R}^{n+1}$. Corollary 4 of \cite[\S 3.2]{Ratcliffe} implies that the group of hyperbolic isometries acts transitively on the set of hyperplanes. Thus each hyperplane is the polar hyperplane to a space-like vector, since for instance $(\mathbb{R}^n\times \{0\})\cap\mathbb{H}^n$ is the polar hyperplane to $\be_{n+1} = (0,\hdots,0,1)$. 

Every hyperplane $P = V\cap\mathbb{H}^n$ is a totally geodesic copy of $\mathbb{H}^{n-1}$ in $\mathbb{H}^n$, being, for any $\bx\in P$, the image of the restriction of the exponential map based at $\bx$ to $T_{\bx} P = V\cap\bx^{\perp}$. Conversely, the exponential map's explicit description shows that any $(n-1)$-dimensional totally geodesic subspace $P$ of $\mathbb{H}^{n}$ is contained in $V = \mathrm{span}\{\bx,T_{\bx}P\}$ for any $\bx\in P$, and hence is a hyperplane.

We define a \textit{half-space} to be the closure of one component of $\mathbb{H}^n - P$, for a hyperplane $P$. We call $P$ the \textit{boundary} of $H$ and $H-P$ the \textit{interior}. From eg.~the model case above we see that each hyperplane bounds exactly two distinct half-spaces, which have disjoint interiors.

\begin{lemma}\label{space-like corresp} There is a bijective correspondence between half-spaces of $\mathbb{H}^n$ and unit space-like vectors of $\mathbb{R}^{n+1}$ that sends $\by\in\mathbb{R}^{n+1}$ to $H = \{\bx\in\mathbb{H}^n\,|\,\bx\circ\by\le 0\}$. In the other direction, it sends a half-space $H$ to the unit outward normal $\by$ to $H$ at any point of its boundary.
\end{lemma}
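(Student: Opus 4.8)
The plan is to verify both directions of the correspondence essentially by hand, using the exponential-map description of hyperplanes and the geodesic formulas from the preamble. First I would check that the assignment $\by \mapsto H_{\by} := \{\bx \in \mathbb{H}^n : \bx \circ \by \le 0\}$ is a well-defined map into the set of half-spaces and that $\by$ is recoverable from $H_{\by}$. For a unit space-like $\by$, the polar hyperplane $P_{\by} = \by^{\perp} \cap \mathbb{H}^n$ of Definition \ref{polar} is indeed a hyperplane, since $\by^{\perp}$ is time-like (as observed just before Definition \ref{polar}); so $\mathbb{H}^n \setminus P_{\by}$ has exactly two components. The nonempty open sets $\{\bx \circ \by < 0\}$ and $\{\bx \circ \by > 0\}$ --- nonempty because the geodesic $t \mapsto \cosh t\,\bx + \sinh t\,\by$ through a point $\bx \in P_{\by}$ pairs with $\by$ to give $\sinh t$ --- partition $\mathbb{H}^n \setminus P_{\by}$, hence are its two components; thus $H_{\by}$ is the closure of the first one, a half-space with $\partial H_{\by} = P_{\by}$. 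Moreover, for every $\bx \in P_{\by}$ the vector $\by$ lies in $\bx^{\perp} = T_{\bx}\mathbb{H}^n$ (because $\bx \circ \by = 0$), is a unit vector, is orthogonal to $T_{\bx} P_{\by} = \bx^{\perp} \cap \by^{\perp}$, and the geodesic above leaves $H_{\by}$ for $t > 0$; so $\by$ is the unit outward normal to $H_{\by}$ at each of its boundary points. In particular $H_{\by} = H_{\by'}$ forces $\by = \by'$, so the map is injective.

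For surjectivity, let $H$ be an arbitrary half-space, put $P = \partial H$, fix $\bx \in P$, and let $\by$ be the unit outward normal to $H$ at $\bx$; being a unit vector in $T_{\bx}\mathbb{H}^n = \bx^{\perp}$, it is space-like and satisfies $\bx \circ \by = 0$. Since $\by$ is orthogonal to the codimension-one subspace $T_{\bx} P$ of $T_{\bx}\mathbb{H}^n$, we have $T_{\bx} P = \bx^{\perp} \cap \by^{\perp}$; and since $P = \exp_{\bx}(T_{\bx} P)$, formula (\ref{arbitrary}) shows that every point of $P$ pairs to zero with $\by$, i.e.\ $P \subseteq P_{\by}$. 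As $P$ and $P_{\by}$ are both hyperplanes, this forces $P = P_{\by}$. Now $H$ and $H_{\by}$ are two half-spaces sharing the boundary $P = P_{\by}$, so $H$ is either $H_{\by}$ or the opposite half-space $H_{-\by} = \{\bx \circ \by \ge 0\}$; but by the previous paragraph their unit outward normals at $\bx$ are $\by$ and $-\by$ respectively, while $H$'s is $\by$, so $H = H_{\by}$. Hence $\by \mapsto H_{\by}$ is a bijection onto the set of half-spaces, and its inverse carries a half-space $H$ to the unit outward normal to $H$ at any of its boundary points, as claimed.

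The step I expect to require the most care is the surjectivity argument: identifying $T_{\bx} P$ with $\bx^{\perp} \cap \by^{\perp}$ and then upgrading $P \subseteq P_{\by}$ to $P = P_{\by}$ --- for which one uses that a hyperplane spans its codimension-one defining subspace, since it is an $(n-1)$-dimensional submanifold containing a point together with the corresponding $(n-1)$-dimensional tangent space. The use of the \emph{outward} normal, as opposed to the inward one, is exactly what distinguishes $H_{\by}$ from $H_{-\by}$, so it is the point at which to keep track of signs. One could instead shortcut the whole proof by reducing to the model case $\by = \be_{n+1}$, $H = \{x_{n+1} \le 0\}$, in which everything above is transparent, using that $O^+(1,n)$ acts transitively on half-spaces --- which follows from its transitivity on hyperplanes together with the coordinate reflection negating the last entry --- and that the assignment $\by \mapsto H_{\by}$ is equivariant.
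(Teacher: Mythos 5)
Your proposal is correct and follows essentially the same route as the paper: identify the boundary of a half-space with the polar hyperplane of its outward unit normal via the exponential map, and pin down which of the two half-spaces is $\{\bx\circ\by\le 0\}$ by evaluating $\bx\circ\by$ along the normal geodesic $\gamma_{\by}$ and using that the components of $\mathbb{H}^n\setminus P$ map to fixed signs. The reorganization into well-definedness/injectivity followed by surjectivity, and the remark about the $O^+(1,n)$-equivariant shortcut, are cosmetic differences only.
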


Above, given a hyperplane $P$ and any $\bx\in P$, a \textit{normal} vector to $P$ at $\bx$---and to a half-space $H$ bounded by $P$---is an element of $T_{\bx}\mathbb{H}^n$ orthogonal to the codimension-one subspace $T_{\bx} P$. A unit normal vector $\by$ to $P$ determines a geodesic $\gamma_{\by}(t) = \cosh t\, \bx + \sinh t\, \by$ that intersects $P$ transversely, and we say $\by$ is \textit{outward} to $H$ if $\gamma(t) \in H$ for all $t<0$. 

\begin{proof} For a hyperplane $P$ and any $\bx\in P$, since the orthogonal subspace to $T_{\bx} P$ in $T_{\bx}\mathbb{H}^n$ is one-dimensional there are exactly two unit normals to $P$. If $\by$ is one of these, the other is $-\by$, and exactly one of them is outward to a given half-space $H$ bounded by $P$. Take this to be $\by$. Any $\bx'\in P$ is of the form $\gamma_{\bz}(1)$ for some $\bz\in T_{\bx} P$, with $\gamma_{\bz}$ as in (\ref{arbitrary})---ie.~$\bx'$ is the exponential image of $\bz$---and hence $\by\circ \bx'$ also equals $0$. Thus $P$ is the polar hyperplane of $\by$.

For this $\by$, we claim that $H = \{\bx\in\mathbb{H}^n \,|\,\bx\circ\by < 0\}$. Defining $f\co\mathbb{H}^n\to\mathbb{R}$ by $f(\bx) = \bx\circ\by$, note that since the interior of $H$ is a connected component of the complement of $P = f^{-1}(0)$, it maps into one of $(-\infty,0)$ or $(0,\infty)$ under $f$. Since it contains $\gamma_{\by}(t)$ for $t<0$, it is the former. Similarly, the other component of $\mathbb{H}^n-P$ maps into $(0,\infty)$, so the claim holds.

Conversely, a unit space-like vector $\by$ belongs to $T_{\bx}\mathbb{H}^n = \bx^{\perp}$ at any point $\bx$ of its polar hyperplane $P$, and it is normal to $T_{\bx} P = V\cap\bx^{\perp}$ for $V = \{\bv\in\mathbb{R}^{n+1}\,|\,\bv\circ\bx = 0\}$. A computation shows that it is also the outward normal to the half-space $H = \{\bx\in\mathbb{H}^n\,|\,\bx\circ\by\le 0\}$. \end{proof}

We use this to give a series of geometric interpretations on the Lorentz pairing between vectors of various types and space-like vectors. The first follows directly from Theorem 3.2.12 of \cite{Ratcliffe}.

\begin{lemma} For $\bv\in\mathbb{H}^n$ and a unit space-like vector $\by$, the signed distance $d$ from $\bv$ to the polar hyperplane of $\by$ satisfies $\sinh d = \bv \circ \by$, where the sign is negative if and only if $\bv$ is contained in the interior of the half-space bounded by $P$ with outward normal $\by$. \end{lemma}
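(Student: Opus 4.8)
The plan is to combine Ratcliffe's formula for the distance from a point to a hyperplane with a careful bookkeeping of signs via Lemma \ref{space-like corresp}. Applied to the polar hyperplane $P$ of $\by$, whose two unit normals at any of its points are $\pm\by$, \cite[Theorem 3.2.12]{Ratcliffe} already gives the magnitude $|\sinh d| = |\bv\circ\by|$, so the only real work is to locate the foot of the perpendicular from $\bv$ to $P$ explicitly and then read off the sign. For completeness I would also record the short self-contained derivation sketched below, which parallels the Lagrange-multiplier computations in Lemmas \ref{to the horoball!} and \ref{to the other horoball!} but is cleaner here because $P$ is linear.

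First I would resolve $\bv$ into components along and orthogonal to $\by$: set $s = \bv\circ\by$ and $\bw = \bv - s\by$, so that $\bw\circ\by = s - s(\by\circ\by) = 0$ and hence $\bw$ lies in the time-like subspace $V = \{\bx : \bx\circ\by = 0\}$ spanning $P$. A one-line computation gives $\bw\circ\bw = \bv\circ\bv - s^2 = -(1+s^2) < 0$, so $\bw$ is a nonzero time-like vector; let $\epsilon\in\{\pm 1\}$ be the sign for which $\epsilon\bw$ is positive and put $\bu = \epsilon\bw/\sqrt{1+s^2}$, so that $\bu\circ\bu = -1$, $\bu$ is positive, and $\bu\circ\by = 0$, i.e. $\bu\in P$. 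Taking the Lorentzian product of $\bv = s\by + \epsilon\sqrt{1+s^2}\,\bu$ with $\bu$ yields $\bv\circ\bu = -\epsilon\sqrt{1+s^2}$; since the reverse Cauchy--Schwarz inequality quoted in Section \ref{background} forces $\bv\circ\bu \le -1 < 0$, we must have $\epsilon = +1$. Choosing $t_0$ with $\sinh t_0 = s$ (so $\cosh t_0 = \sqrt{1+s^2}$) we then obtain
\[ \bv = \cosh t_0\,\bu + \sinh t_0\,\by = \gamma_{\by}(t_0), \]
where $\gamma_{\by}$ is the unit-speed geodesic through $\bu\in P$ in the direction of the normal $\by$, as in the discussion preceding Lemma \ref{space-like corresp}.

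Next I would check that $\bu$ realizes the distance from $\bv$ to $P$. For any $\bz\in P$ we have $\bz\circ\by = 0$, hence $-\bv\circ\bz = -\sqrt{1+s^2}\,(\bu\circ\bz)$, and reverse Cauchy--Schwarz gives $\bu\circ\bz \le -1$ with equality iff $\bz = \bu$; therefore $\cosh d_H(\bv,\bz) = -\bv\circ\bz \ge \sqrt{1+s^2} = \cosh t_0$, with equality iff $\bz = \bu$. Thus the distance from $\bv$ to $P$ equals $|t_0|$ and is uniquely attained at $\bu$. Finally, to fix the sign, recall from Lemma \ref{space-like corresp} that the half-space with outward normal $\by$ is $H = \{\bx : \bx\circ\by \le 0\}$, whose interior is $\{\bx : \bx\circ\by < 0\}$; since $\gamma_{\by}(t)\circ\by = \sinh t$, the point $\bv = \gamma_{\by}(t_0)$ lies in the interior of $H$ precisely when $t_0 < 0$. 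Setting the signed distance $d$ equal to $t_0$ therefore gives $|d| = d_H(\bv,P)$, makes $d < 0$ exactly when $\bv$ is in the interior of $H$, and yields $\sinh d = \sinh t_0 = s = \bv\circ\by$, as claimed.

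The only step needing any care is the assertion $\epsilon = +1$ — equivalently, that the orthogonal projection of $\bv$ into $V$ is a positive vector, so that after rescaling it is a genuine point of $\mathbb{H}^n$ — which is not evident from comparing first coordinates but falls out immediately from the reverse Cauchy--Schwarz inequality. Everything else is a direct computation or an appeal to \cite[Theorem 3.2.12]{Ratcliffe}.
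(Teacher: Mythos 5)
Your proof is correct, but it is genuinely different from what the paper does: the paper offers no argument at all for this lemma beyond the remark that it ``follows directly from Theorem 3.2.12 of Ratcliffe,'' whereas you supply a complete, self-contained derivation. Your route --- decomposing $\bv = s\by + \bw$ with $s = \bv\circ\by$ and $\bw\in\by^{\perp}$, computing $\bw\circ\bw = -(1+s^2)$, normalizing to get the foot $\bu\in P$, and using the reverse Cauchy--Schwarz inequality both to pin down the sign $\epsilon=+1$ and to verify that $\bu$ uniquely minimizes distance --- is clean and entirely algebraic, avoiding even the Lagrange-multiplier calculus the paper uses for the analogous Lemmas \ref{to the horoball!} and \ref{to the hyperplane!}. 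It also produces, as a byproduct, the explicit formula $\bu = (\bv - (\bv\circ\by)\by)/\sqrt{1+(\bv\circ\by)^2}$ for the nearest point, in the same spirit as the displayed formulas (\ref{at the horoball}) and (\ref{at the hyperplane}) elsewhere in Section \ref{background}; the sign bookkeeping via $\gamma_{\by}(t)\circ\by = \sinh t$ and Lemma \ref{space-like corresp} correctly recovers the stated sign convention. What the citation buys the paper is brevity; what your argument buys is independence from Ratcliffe and uniformity with the paper's other ``meaning of the pairing'' lemmas. I see no gaps.
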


In the next result and below, the \textit{ideal boundary} of a hyperplane $P = V\cap\mathbb{H}^n$ (respectively, a half-space $H$ bounded by $P$) is the intersection of $V$ (resp.~the closure of the component of $\mathbb{R}^{n+1}-V$ containing the interior of $H$) with the positive light cone.

\begin{lemma}\label{to the hyperplane!}  For a positive light-like vector $\bx\in\mathbb{R}^{n+1}$, let $S$ be the horosphere determined by $\bx$. Suppose $P\subset\mathbb{H}^n$ is a hyperplane with ideal boundary not containing $\bx$, and let $\by\in\mathbb{R}^{n+1}$ be the outward-pointing normal to the half-space $H$ bounded by $P$ with ideal boundary containing $\bx$.  Then $\bx\circ\by < 0$, and the minimal signed distance $h$ from $P$ to $S$ satisfies $e^h = -\bx\circ\by$, uniquely realized by $\gamma(0)\in P$ and $\gamma(h)\in S$ for
\[ \gamma(t) =  e^{-h}\cosh t\, \bx + e^{-t}\,\by. \]
This is the unique geodesic perpendicular to $P$ in the direction of $\bx$, in the sense of Lemma \ref{to the horoball!}.\end{lemma}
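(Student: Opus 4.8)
The plan is to mimic the Lagrange-multiplier arguments of Lemmas \ref{to the horoball!} and \ref{to the other horoball!}, but now with one light-like constraint vector $\bx$ (cutting out $S$) and one space-like constraint vector $\by$ (cutting out $P$), and to organize the computation around the two-dimensional subspace $W = \mathrm{span}\{\bx,\by\}$. First I would establish the sign claim $\bx\circ\by<0$: since $\bx$ lies in the ideal boundary of the half-space $H$ with outward normal $\by$, it is approached by positive time-like vectors $\bv_t\in H$, for which $\bv_t\circ\by\le 0$ by Lemma \ref{space-like corresp}; passing to the limit gives $\bx\circ\by\le 0$, and equality would force $\bx\in\by^\perp$, which together with $\bx$ being light-like would put $\bx$ in the ideal boundary of $P$, contrary to hypothesis. (One should also note $W$ is nondegenerate and indefinite—equivalently $\bx\circ\by\neq0$—so that $\{\bx,\by\}$ spans a Lorentzian plane meeting $\mathbb{H}^n$ in a geodesic; this is exactly the $\bx\circ\by<0$ just proved.)

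Next I would set up the optimization. To find the minimal signed distance from $P$ to $S$, note by Lemma \ref{to the horoball!} that the signed distance from a point $\bv\in P$ to $S$ is $\log(-\bv\circ\bx)$, so I want to minimize $f(\bv)=\bv\circ\bx$ over $\bv\in P$, i.e. subject to $g_1(\bv)=\bv\circ\by=0$ and $g_2(\bv)=\bv\circ\bv=-1$ (with $\bv$ positive). As in the earlier lemmas, the gradient computation $\nabla f=\bar\bx$, $\nabla g_1=\bar\by$, $\nabla g_2=2\bar\bv$ shows that at a critical point $\bx\in\mathrm{span}\{\by,\bv\}$, hence $\bv\in W=\mathrm{span}\{\bx,\by\}$; writing $\bv=a\bx+b\by$ and imposing the two constraints gives a finite list of candidate critical points, and solving yields a unique positive one—this is where I expect the bookkeeping to concentrate, and I would double-check signs using $\bx\circ\by<0$, $\by\circ\by=1$, $\bx\circ\bx=0$. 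Evaluating $f$ there should give $-\bv\circ\bx = -\bx\circ\by$, hence $e^h=-\bx\circ\by$ for the minimal signed distance $h$.

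To see this critical point is a genuine global minimum of the distance (not merely a critical point), I would argue as in Lemma \ref{to the other horoball!}: if $\bv\in P$ has $-\bv\circ\bx$ bounded above, then $\bv\circ\bu$ is bounded (using $\bv\circ\by=0$ to kill the $\by$-component of $\bu$), so $\bv$ stays in a compact ball of $\mathbb{H}^n$ around the critical point $\bu$; since $P$ is closed, the infimum is attained, and by uniqueness of the critical point it is attained only at $\bu$. Finally, for the geodesic: I would take the formula $\gamma(t)=e^{-h}\cosh t\,\bx + e^{-t}\,\by$ from the statement and verify directly that $\gamma(t)\circ\gamma(t)=-1$ for all $t$ (using $e^h=-\bx\circ\by$, $\by\circ\by=1$, $\bx\circ\bx=0$), that $\gamma$ is positive, and that $\gamma''(t)=\gamma(t)$, so $\gamma$ is a geodesic by \cite[Theorem 3.2.4]{Ratcliffe}; then check $\gamma(0)=\by\in P$ (indeed $\gamma(0)\circ\by = e^{-h}\bx\circ\by + 1 = 0$) and that $\gamma(h)$ is the point $\bu$ of the previous paragraph lying on $S$. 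That $\gamma$ is perpendicular to $P$ follows since $\gamma'(0) = e^{-h}\,\bx - \by$ and one checks $\gamma'(0)\circ\by = 0$ while $\gamma'(0)\in T_{\gamma(0)}\mathbb{H}^n$, so $\gamma'(0)$ is normal to $T_{\gamma(0)}P$; and it is "in the direction of $\bx$" in the sense of Lemma \ref{to the horoball!} because, comparing with that lemma's formula $e^{-t}\bv + e^{-d}\sinh t\,\bx$ applied at $\bv=\gamma(0)$, the two parametrizations agree. The main obstacle is simply carrying the signs correctly through the quadratic solve for $\bv=a\bx+b\by$; everything else is a direct computation of the type already performed twice in the excerpt.
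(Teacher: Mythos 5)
Your proposal follows essentially the same route as the paper: Lagrange multipliers locate the unique critical point of $\bv\mapsto\bv\circ\bx$ on $P$ inside $\mathrm{span}\{\bx,\by\}$, a properness argument (the set of $\bv\in P$ with $-\bv\circ\bx$ bounded above is compact) upgrades it to the global extremum, and the geodesic is obtained by feeding the critical point into the formula of Lemma \ref{to the horoball!}. Three small corrections. First, you should \emph{maximize} $f(\bv)=\bv\circ\bx$, not minimize it, since the signed distance to $S$ is $\log(-\bv\circ\bx)$; your compactness argument is in fact aimed at the correct direction, so nothing downstream breaks. Second, the step you defer---deciding which of the two candidates $\pm\left(\frac{-1}{\bx\circ\by}\bx+\by\right)$ is the positive one---is exactly where the paper spends most of its effort (an explicit coordinate computation using Cauchy--Schwarz), and it is genuinely needed: the wrong choice would give $e^h=\bx\circ\by<0$. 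It can be dispatched faster than the paper does it: the positive candidate $\bv$ lies in $\mathbb{H}^n$ while $\bx$ is positive light-like, so $\bv\circ\bx<0$ by the reverse Cauchy--Schwarz inequality quoted in Section \ref{background}, which forces $\bv\circ\bx=\bx\circ\by$ and hence the ``$+$'' sign. Third, your perpendicularity check is garbled: $\gamma(0)=e^{-h}\bx+\by$ (not $\by$), $\gamma'(0)=-\by$ (not $e^{-h}\bx-\by$), and perpendicularity to $P$ means $\gamma'(0)$ is a \emph{multiple} of the normal $\by$, not that $\gamma'(0)\circ\by=0$---the latter would make $\gamma$ tangent to $P$. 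Since $\gamma'(0)=-\by$, perpendicularity does hold; the paper itself never verifies it explicitly and, like you, simply identifies $\gamma$ with the geodesic through $\bv$ in the direction of $\bx$ from Lemma \ref{to the horoball!}.
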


\begin{remark} In the complementary case to Lemma \ref{to the hyperplane!} in which $\bx$ as above lies in the ideal boundary of $P$, then for any $\bv\in P$ the entire geodesic $\gamma$ from $\bv$ in the direction of $\bx$ from Lemma \ref{to the horoball!} lies in $P$. Thus $P$ contains points at arbitrarily small signed distance from $B$; in particular, it intersects it.
\end{remark}

\begin{proof}  A vector $\bv\in\mathbb{R}^{n+1}$ lies in $P$ if and only if $\bv\circ\bv = -1$, $\bv$ is positive,  and $\bv\circ\by = 0$.  By the theory of Lagrange multipliers, the restriction of $f(\bv)\doteq \bv\circ \bx$ to $P$ may attain a local extremum at $\bv\in P$ only if the gradient of $f$ at $\bv$ is a linear combination of the constraint gradients $\nabla g_1(\bv)$ and $\nabla g_2(\bv)$, where $g_1(\bv) = \bv\circ\by$ and $g_2(\bv) = \bv\circ\bv$.  Direct computation yields $\nabla f(\bv) = \bar\bx$, $\nabla g_1(\bv) = \bar\by$, and $\nabla g_2(\bv) = 2\bar\bv$, where $\bar\bx$ is obtained from $\bx$ by multiplying the first entry by $-1$ and similarly for the others.  It thus follows that $\bx$ is a linear combination of $\by$ and $\bv$ for such a point $\bv$, so since $\bx$ is not a multiple of $\by$ we can express $\bv$ in terms of $\bx$ and $\by$.

Plugging $\bv = a\bx+b\by$ into the constraint equations and solving for $a,b\in\mathbb{R}$ yields:\begin{align}\label{at the hyperplane}
	\bv = \pm\,\left( \frac{-1}{\bx\circ\by}\bx + \by \right) \end{align}
Only one of these two solutions is positive.  We claim that $\bv$ is positive and hence is the unique critical point of the restriction of $f$ to $H$.  By Lemma \ref{to the horoball!} its signed distance $h$ to $B$ will then satisfy $e^h = -\bx\circ\by$, and the geodesic through $\bv$ in the direction of $\bx$ will be given by:
\[ \gamma(t) = e^{-t}\bv -\frac{\sinh t}{\bx\circ\bv}\bx = \frac{\cosh t}{-\bx\circ\by}\bx + e^{-t}\by=  e^{-h}\cosh t\, \bx + e^{-t}\,\by. \]

To prove the claim, we first note that $\bx\circ\by < 0$: this follows from the fact that the half-space $H$ whose ideal boundary contains $\bx$ is characterized as $H = \{\bv\in\mathbb{H}^n\,|\,\bv\circ\by \le 0\}$. We then write $\bx = (x_1,\bx_0)$ and $\by = (y_1,\by_0)$ for vectors $\bx_0,\by_0\in\mathbb{R}^n$, so the first entry of $\bv$ is $x_1/(-\bx\circ\by) + y_1$. The hypothesis that $\bx$ is positive means that $x_1 > 0$, so since $\bx\circ\by < 0$, the first entry of $\bv$ is certainly positive if $y_1 \ge 0$. We therefore suppose that $y_1 < 0$. Since $\bx$ is light-like and $\by$ is unit space-like, we can write $x_1 = \|\bx_0\|$ and $y_1 = -\sqrt{\|\by_0\|^2-1}$, and hence
\[ \bx\circ\by = \|\bx_0\|\sqrt{\|\by_0\|^2-1} + \bx_0\cdot\by_0, \]
where $\bx_0\cdot\by_0$ is the ordinary dot product of $\bx_0$ and $\by_0$. Since $\bx\circ\by < 0$ we must have $\bx_0\cdot\by_0 < 0$; by the Cauchy Schwarz inequality, $-\bx_0\cdot\by_0 \leq \|\bx_0\|\|\by_0\|$. 
Thus we have:\begin{align*}
	\frac{-1}{\bx\circ\by}x_1 + y_1 & = \frac{\|\bx_0\|}{-\bx_0\cdot\by_0-\|\bx_0\|\sqrt{\|\by_0\|^2 -1}} - \sqrt{\|\by_0\|^2-1} \\
	& \ge \frac{\|\bx_0\|}{ \|\bx_0\|\|\by_0\|-\|\bx_0\|\sqrt{\|\by_0\|^2 -1}} - \sqrt{\|\by_0\|^2-1} \end{align*}
Simplifying the above and using the fact that $1/(\|\by_0\| - \sqrt{\|\by_0\|^2-1}) = \|\by_0\| + \sqrt{\|\by_0\|^2-1}$, we obtain in this case that $x_1/(-\bx\circ\by) + y_1\ge \|\by_0\| > 0$. This proves the claim.

%For all $t\in\mathbb{R}$, it is clear from this description that $\gamma(t)$ is a positive linear combination of $\bx$ and $\by$.  Moreover, $\gamma'(0) = -\by$ is not a scalar multiple of $\bv$, so the geodesic parametrized by $\gamma$ intersects the Euclidean line through $\bv$ and the origin transversely in the plane spanned by $\bx$ and $\by$.  It therefore has points on either side of this line, so since these points are all positive linear combinations of $\bx$ and $\by$, the line separates $\bx$ from $\by$ in the plane they span.

%The codimension-one subspace of $\mathbb{R}^{n+1}$ containing $H$ intersects the plane spanned by $\bx$ and $\by$ in a one-dimensional subspace containing $\bv$; that is, in the line through $\bv$ and the origin.  It thus follows from the above that this subspace separates $\bx$ from $\by$ in $\mathbb{R}^{n+1}$.  Note also that since $\by$ is by construction perpendicular to $H$, so also is $\gamma'(0) = -\by$.

It remains to show that $\bv$ is the global maximizer for the restriction of $f$ to $P$, hence that it is the minimizer for the signed distance to $S$.  This follows from the fact that $\bv$ is the unique critical point of the restriction of $f$ to $P$, together with the fact that $f(\bu)\to-\infty$ as $\bu\in P$ escapes compact sets.  Indeed, for any fixed $r<0$ and $\bu\in P$ such that $\bu\circ \bx > r$, we have $\bu\circ\bv = (-1/\bx\circ\by)\bu\circ\bx > -r/\bx\circ\by$; hence $\bu$ lies in the closed ball of radius $\cosh^{-1}(r/\bx\circ\by)$ about $\bv$. 
\end{proof}

The result below combines a few recorded by Ratcliffe in \cite{Ratcliffe}.

\begin{lemma}[cf.~\cite{Ratcliffe}, pp.~65--69]\label{geometry of inner product}  Let $\by_1, \by_2 \in \mathbb{R}^{n+1}$ be linearly independent space-like vectors, with polar hyperplanes $P_1$ and $P_2$ in $\mathbb{H}^n$, contained in $n$-dimensional subspaces $V_1$ and $V_2$ of $\mathbb{R}^{n+1}$, respectively.  Exactly one of the following holds:  \begin{enumerate}
\item $P_1$ and $P_2$ intersect in $\mathbb{H}^n$, and $|\by_1\circ\by_2| < \|\by_1\|\|\by_2\|$. Hence for some $\eta(\by_1,\by_2) \in (0,\pi)$:
$$\by_1\circ \by_2 = \|\by_1\| \|\by_2\| \cos \eta(\by_1,\by_2).$$  
For any $\bv \in P_1 \cap P_2$, $\eta(\by_1,\by_2)$ is the angle in $T_{\bv} \mathbb{H}^n$ between the normal vectors $\by_1$ and $\by_2$ to $P_1$ and $P_2$, respectively, at $\bv$. 
\item The distance between points of $P_1$ and $P_2$ attains a non-zero minimum, and $|\by_1\circ\by_2| > \|\by_1\|\|\by_2\|$. Hence for some $\eta(\by_1,\by_2) \in (0,\infty)$:
$$ |\by_1\circ \by_2| = \|\by_1\| \|\by_2\| \cosh \eta(\by_1,\by_2).  $$
In this case $\eta(\by_1,\by_2)$ is the (minimum) distance in $\mathbb{H}^n$ between $P_1$ and $P_2$, and $\by_1\circ \by_2 < 0$ if and only if $\by_1$ and $\by_2$ are oppositely oriented tangent vectors to the hyperbolic geodesic intersecting each of  $P_1$ and $P_2$ perpendicularly.
\item $P_1\cap P_2 = \emptyset$ but their ideal boundaries intersect, and $|\by_1\circ\by_2| = \|\by_1\|\|\by_2\|$.\end{enumerate} 
\end{lemma}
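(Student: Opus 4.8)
The plan is to route everything through the two–dimensional subspace $W\doteq\mathrm{span}\{\by_1,\by_2\}$ and its Lorentzian-orthogonal complement $W^{\perp}$. First I would observe that $V_1\cap V_2=W^{\perp}$, so $P_1\cap P_2=W^{\perp}\cap\mathbb{H}^n$ and the intersection of the ideal boundaries of $P_1$ and $P_2$ is $W^{\perp}$ intersected with the positive light cone. The Gram matrix of $\by_1,\by_2$ has positive diagonal entries $\|\by_1\|^2,\|\by_2\|^2$ and determinant $D\doteq\|\by_1\|^2\|\by_2\|^2-(\by_1\circ\by_2)^2$, so the restriction of $\circ$ to $W$ is positive-definite if $D>0$, has signature $(1,1)$ if $D<0$, and is positive-semidefinite of rank one if $D=0$; and these are exactly the cases in which $|\by_1\circ\by_2|$ is less than, greater than, or equal to $\|\by_1\|\|\by_2\|$. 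Since these sign conditions on $D$ are mutually exclusive and exhaustive, and the three geometric alternatives in the statement are mutually exclusive, it suffices to show $D>0$ forces alternative (1), $D<0$ forces (2), and $D=0$ forces (3), identifying $\eta(\by_1,\by_2)$ along the way.

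Suppose $D>0$. Then $\mathbb{R}^{n+1}=W\oplus W^{\perp}$ orthogonally, so $W^{\perp}$ has signature $(1,n-2)$ and contains a positive unit time-like vector $\bv$, which lies in $P_1\cap P_2$. At such a $\bv$ each $\by_i$ lies in $\bv^{\perp}=T_{\bv}\mathbb{H}^n$ and is normal there to $P_i$; since the metric on $T_{\bv}\mathbb{H}^n$ is the restriction of $\circ$, the angle between $\by_1$ and $\by_2$ is $\arccos\bigl((\by_1\circ\by_2)/(\|\by_1\|\|\by_2\|)\bigr)$, which lies in $(0,\pi)$ precisely because $|\by_1\circ\by_2|<\|\by_1\|\|\by_2\|$ and which depends only on $\by_1,\by_2$; call it $\eta(\by_1,\by_2)$. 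Suppose instead $D=0$. Then the radical $W\cap W^{\perp}$ of $\circ|_W$ is a line spanned by a light-like vector $\ell$, so $\ell\in W^{\perp}$; and since $\ell$ is light-like and nonzero while the orthogonal complement of any time-like vector is positive-definite, $W^{\perp}$ contains no time-like vector. Hence $P_1\cap P_2=\emptyset$, while whichever of $\pm\ell$ is positive is a light-like vector in $W^{\perp}$, hence a common point of the ideal boundaries of $P_1$ and $P_2$.

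The case $D<0$ is the substantial one. Here $\circ|_W$ has signature $(1,1)$, so $g\doteq W\cap\mathbb{H}^n$ is a geodesic, while $W^{\perp}$ has signature $(0,n-1)$ and misses $\mathbb{H}^n$, so $P_1\cap P_2=\emptyset$. For each $i$, the line $\by_i^{\perp}\cap W$ is the orthogonal complement inside $W$ of the space-like vector $\by_i$, hence is time-like and meets $\mathbb{H}^n$ in a point $\bv_i\in P_i\cap g$; the tangent line to $g$ at $\bv_i$ equals $\bv_i^{\perp}\cap W$, which also contains the unit normal $\by_i/\|\by_i\|$ to $P_i$, so $g$ meets $P_i$ perpendicularly at $\bv_i$. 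Any geodesic meeting both $P_1$ and $P_2$ perpendicularly has its spanning plane containing $\by_1/\|\by_1\|$ and $\by_2/\|\by_2\|$, hence equal to $W$, so $g$ is the unique common perpendicular. That $d(\bv_1,\bv_2)$ is the minimal distance between $P_1$ and $P_2$ I would deduce either from the corresponding statement in \cite[pp.~65--69]{Ratcliffe} or as follows: a geodesic segment from $\bq\in P_2$ meeting $P_1$ perpendicularly realizes $d(\bq,P_1)$, which by the signed-distance formula $\sinh d=\bq\circ(\by_1/\|\by_1\|)$ recorded above is, up to the sign-constant quantity $\bq\circ(\by_1/\|\by_1\|)$ (sign-constant since $P_1\cap P_2=\emptyset$), a linear functional on $P_2$; a Lagrange-multiplier computation as in Lemmas \ref{to the horoball!}--\ref{to the hyperplane!} shows this functional has a unique critical point on $P_2$, lying in $W\cap\mathbb{H}^n=g$, namely $\bv_2$, and escapes to $-\infty$ off compact sets, so $\bv_2$ is the global minimizer. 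Finally, expressing $\bv_1$ and $\bv_2$ in an orthonormal basis of $W$ adapted to $\by_1$ gives $-\bv_1\circ\bv_2=|\by_1\circ\by_2|/(\|\by_1\|\|\by_2\|)=\cosh d(\bv_1,\bv_2)$; this exceeds $1$ since $D<0$, so it equals $\cosh\eta(\by_1,\by_2)$ for a unique $\eta(\by_1,\by_2)\in(0,\infty)$. Writing each $\by_i/\|\by_i\|$ as a signed multiple of the unit tangent to $g$ at $\bv_i$ and comparing these signs relative to one orientation of $g$, the same computation shows they are opposite exactly when $\by_1\circ\by_2<0$, which is the orientation statement.

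I expect the case $D<0$ to be the main obstacle: both the identification of $g=\mathrm{span}\{\by_1,\by_2\}\cap\mathbb{H}^n$ as the common perpendicular and, more delicately, the passage from ``length of the common perpendicular'' to ``minimal distance between the hyperplanes,'' for which the cleanest route is the Lagrange-multiplier-plus-properness pattern already used above, together with enough sign bookkeeping to pin down the final orientation equivalence. If the cited pages of \cite{Ratcliffe} already contain the distance formula of case (2), the remaining work is mostly the signature trichotomy of the first paragraph and the ideal-point analysis of case (3).
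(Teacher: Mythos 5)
Your proposal is correct, but it cannot be compared line-by-line with an argument in the paper, because the paper does not prove this lemma: it is stated as ``cf.~\cite{Ratcliffe}, pp.~65--69,'' with the remark that it ``combines a few [results] recorded by Ratcliffe.'' What you have written is a self-contained proof of the cited material, organized around the signature trichotomy for the restriction of the Lorentzian form to $W=\mathrm{span}\{\by_1,\by_2\}$, detected by the sign of the Gram determinant $D=\|\by_1\|^2\|\by_2\|^2-(\by_1\circ\by_2)^2$. This is the standard route (and essentially the one Ratcliffe takes), and all the key steps check out: $V_1\cap V_2=W^{\perp}$ correctly reduces the intersection pattern of $P_1,P_2$ and of their ideal boundaries to the signature of $W^{\perp}$; in the case $D>0$ the angle computation at $\bv\in P_1\cap P_2=W^{\perp}\cap\mathbb{H}^n$ is immediate from positive-definiteness of $\circ$ on $T_{\bv}\mathbb{H}^n$; in the case $D=0$ the radical argument correctly produces a common ideal point and excludes a common finite point; and in the case $D<0$ the identification of $g=W\cap\mathbb{H}^n$ as the unique common perpendicular, together with the computation $-\bv_1\circ\bv_2=|\by_1\circ\by_2|/(\|\by_1\|\|\by_2\|)$ in a basis of $W$ adapted to $\by_1$, is correct (I verified the latter: writing $\by_2/\|\by_2\|=c\,\be_1+\bw$ with $\be_1=\by_1/\|\by_1\|$ and $\bw\in W\cap\by_1^{\perp}$ gives $\bv_1\circ\bv_2=-|c|$). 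The only place where you are sketching rather than proving is the passage from ``length of the common perpendicular'' to ``minimum distance between $P_1$ and $P_2$'': the properness claim---that the functional $f(\bu)=\bu\circ\by_1$ on $P_2$ has $|f|\to\infty$ off compact sets---does need an argument, but a clean one is available in your framework (decompose $\bu=\bu_W+\bu_{\perp}$ with $\bu_W\in W$ determined by $f(\bu)$ and $\bu\circ\by_2=0$, and use that $\circ$ is positive-definite on $W^{\perp}$ to see that the sublevel sets of $|f|$ on $P_2$ are compact), and it matches the Lagrange-multiplier-plus-properness pattern the paper uses in Lemmas \ref{to the horoball!} and \ref{to the hyperplane!}. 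What your approach buys over the paper's citation is a uniform treatment of all three cases from a single algebraic invariant, consistent with the vector-based perspective of Section \ref{background}; the cost is the extra bookkeeping in case (2), which is exactly where you predicted the difficulty would lie.
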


In case (3) above we say that $P_1$ and $P_2$ are \textit{parallel}. One can show in this case that there are sequences in $P_1$ and $P_2$ such that the infimum of distances from points of the first sequence to points of the second is $0$. We now expand on case (2) above.

\begin{lemma}\label{perp point} Suppose $\by_1$ and $\by_2$ are linearly independent space-like vectors such that the distance between points of their polar hyperplanes $P_1$ and $P_2$ attains a non-zero minimum. This distance  is realized as $d(\bv_1,\bv_2)$ for unique $\bv_1 \in P_1$ and $\bv_2 \in P_2$, with $\bv_1$ given by: \begin{align*}
  \bv_1 = \pm \frac{\left(\by_1\circ \by_2/\|\by_1\|\right)\by_1 - \|\by_1\| \by_2}{\sqrt{(\by_1\circ \by_2)^2 - \|\by_1\|^2\|\by_2\|^2}}, \end{align*}
where the sign of ``$\pm$'' above is negative if $\bv_1$ belongs to the half-space $H_2$ bounded by $P_2$ with $\by_2$ as outward normal vector, and positive otherwise.\end{lemma}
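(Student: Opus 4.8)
The plan is to mimic the Lagrange-multiplier arguments of Lemmas~\ref{to the horoball!}--\ref{to the hyperplane!}. I would view $P_1$ as the positive sheet of the solution set of $g_1(\bv)\doteq\bv\circ\by_1 = 0$ and $g_2(\bv)\doteq\bv\circ\bv = -1$, and find the critical points on $P_1$ of $f(\bv)\doteq\bv\circ\by_2$. As before $\nabla f(\bv) = \bar\by_2$, $\nabla g_1(\bv) = \bar\by_1$ and $\nabla g_2(\bv) = 2\bar\bv$, the bar denoting a sign change in the first coordinate, so at a critical point $\by_2\in\mathrm{span}\{\by_1,\bv\}$; linear independence of $\by_1$ and $\by_2$ then forces $\bv\in\mathrm{span}\{\by_1,\by_2\}$. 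Writing $\bv = a\by_1 + b\by_2$ and imposing $g_1(\bv) = 0$ gives $a = -b(\by_1\circ\by_2)/\|\by_1\|^2$, with $b\neq 0$ (else $a=0$ too, contradicting $g_2(\bv) = -1$); imposing $g_2(\bv) = -1$ then gives
\[ b^2 = \frac{\|\by_1\|^2}{(\by_1\circ\by_2)^2 - \|\by_1\|^2\|\by_2\|^2}, \]
with positive denominator by Lemma~\ref{geometry of inner product}(2). So $\bv$ is one of the two vectors $\pm\bigl[(\by_1\circ\by_2/\|\by_1\|)\by_1 - \|\by_1\|\by_2\bigr]/\sqrt{(\by_1\circ\by_2)^2-\|\by_1\|^2\|\by_2\|^2}$ appearing in the displayed formula; as $\bv$ is time-like, exactly one of these is a positive vector, and that is the $\bv_1\in P_1$ of the statement.

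Next I would show $\bv_1$ is the foot on $P_1$ of the common perpendicular. By Lemma~\ref{geometry of inner product}(2) the minimum distance between $P_1$ and $P_2$ is attained, say at $(\bv_1^\star,\bv_2^\star)$; since every $\bv\in P_1$ has $d(\bv,P_2)$ at least this minimum while $d(\bv_1^\star,P_2)\le d(\bv_1^\star,\bv_2^\star)$, the point $\bv_1^\star$ minimizes $\bv\mapsto d(\bv,P_2)$ over $P_1$. Because $P_1\cap P_2=\emptyset$, $f$ is nowhere zero on the connected set $P_1$; together with $d(\bv,P_2) = \operatorname{arcsinh}(|\bv\circ\by_2|/\|\by_2\|)$ (the Lemma just before~\ref{to the hyperplane!}, applied to $\by_2/\|\by_2\|$) and monotonicity of $\operatorname{arcsinh}$, this makes $\bv_1^\star$ an extremum---hence, by Lagrange, a critical point---of $f|_{P_1}$, so $\bv_1^\star = \bv_1$. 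For uniqueness, write an arbitrary point of $P_1$ as $\gamma(s) = \cosh s\,\bv_1 + \sinh s\,\bw$, $s\ge 0$, with $\bw$ a unit vector of $T_{\bv_1}P_1 = \{\bw:\bw\circ\by_1 = 0 = \bw\circ\bv_1\}$; since $\bv_1 = a\by_1 + b\by_2$ with $b\neq 0$, the relations $\bw\circ\by_1 = 0 = \bw\circ\bv_1$ force $\bw\circ\by_2 = 0$, so $f(\gamma(s)) = \cosh s\,(\bv_1\circ\by_2)$ with $\bv_1\circ\by_2\ne 0$. Hence $|f\circ\gamma|$ is strictly increasing in $s$, so $\bv_1$ is the \emph{unique} minimizer over $P_1$ of distance to $P_2$; the symmetric argument gives a unique $\bv_2\in P_2$, and $(\bv_1,\bv_2)$ is the unique minimizing pair, realizing the minimum as $d(\bv_1,\bv_2)$.

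Finally I would pin down the sign by computing $\bv_1\circ\by_2$ directly. If the displayed formula holds with the ``$+$'' sign then
\[ \bv_1\circ\by_2 = \frac{(\by_1\circ\by_2)^2/\|\by_1\| - \|\by_1\|\|\by_2\|^2}{\sqrt{(\by_1\circ\by_2)^2-\|\by_1\|^2\|\by_2\|^2}} = \frac{\sqrt{(\by_1\circ\by_2)^2-\|\by_1\|^2\|\by_2\|^2}}{\|\by_1\|} > 0 \]
(again using Lemma~\ref{geometry of inner product}(2)), and with the ``$-$'' sign it is the negative of this, hence $<0$. By the Lemma just before~\ref{to the hyperplane!} (for $\by_2/\|\by_2\|$), $\bv_1\circ\by_2 < 0$ holds precisely when $\bv_1$ lies in the interior of the half-space $H_2$ bounded by $P_2$ with outward unit normal $\by_2$, which---since $\bv_1\notin P_2$---is the same as $\bv_1\in H_2$. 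So the ``$\pm$'' is ``$-$'' exactly when $\bv_1\in H_2$, as asserted.

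I expect the Lagrange computation and the sign bookkeeping to be routine; the one step needing a genuine idea is the uniqueness/global-minimality claim, and the observation that makes it painless is that every tangent vector to $P_1$ at $\bv_1$ is Lorentz-orthogonal to $\by_2$ (forced by $\bv_1\in\mathrm{span}\{\by_1,\by_2\}$ and $b\ne 0$), so $f$ along any geodesic of $P_1$ issuing from $\bv_1$ is just $\cosh s$ times $\bv_1\circ\by_2$. I would also be careful to keep ``distance between the hyperplanes $P_1$ and $P_2$'' distinct from ``distance from a point to $P_2$''; bridging the two is where the attainment clause of Lemma~\ref{geometry of inner product}(2) and the monotonicity of $\operatorname{arcsinh}$ come in.
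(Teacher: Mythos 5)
Your proof is correct, and its computational core---the ansatz $\bv_1 = a\by_1 + b\by_2$, solving $\bv_1\circ\by_1 = 0$ and $\bv_1\circ\bv_1 = -1$ for $a$ and $b$, and fixing the sign by computing $\bv_1\circ\by_2$---coincides with the paper's. The difference lies in how each argument arrives at $\bv_1\in\mathrm{Span}\{\by_1,\by_2\}$ and at uniqueness. The paper cites ``standard facts of hyperbolic geometry'' for the existence and uniqueness of the feet and for the perpendicularity of the connecting geodesic $\gamma$, then observes that $\gamma$ therefore has tangent vectors $\by_1$ at $\bv_1$ and $\by_2$ at $\bv_2$, whence $\gamma = \mathrm{Span}\{\by_1,\by_2\}\cap\mathbb{H}^n$; the sign is then read off via Lemma \ref{space-like corresp}. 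You instead obtain the span condition from the Lagrange-multiplier scheme of Lemmas \ref{to the horoball!}--\ref{to the hyperplane!} applied to $f(\bv)=\bv\circ\by_2$ on $P_1$, and you prove uniqueness outright from the observation that every $\bw\in T_{\bv_1}P_1$ satisfies $\bw\circ\by_2=0$ (forced by $\bv_1=a\by_1+b\by_2$ with $b\ne 0$), so that $|f|=\cosh s\,|\bv_1\circ\by_2|$ strictly increases along every geodesic of $P_1$ leaving $\bv_1$; your sign determination uses the unlabeled lemma $\sinh d = \bv\circ\by$ preceding Lemma \ref{to the hyperplane!}, which is equivalent to the paper's use of Lemma \ref{space-like corresp}. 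Your route is longer but self-contained and stylistically uniform with the rest of Section \ref{background}, replacing an appeal to unproved facts about common perpendiculars by explicit verification; the paper's route is shorter and makes visible the geometric fact that $\by_1$ and $\by_2$ are the tangent directions of the transversal at its feet, the picture exploited again in Section \ref{three}. (Incidentally, your reading of the sign clause---``$-$'' exactly when $\bv_1\in H_2$---is the intended one; the statement's ``negative \dots and negative otherwise'' is a typo.)
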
 

\begin{proof} Standard facts of hyperbolic geometry imply the uniqueness of $\bv_1\in P_1$ and $\bv_2\in P_2$, and furthermore that the geodesic $\gamma$ joining $\bv_1$ and $\bv_2$ intersects each of $P_1$ and $P_2$ perpendicularly. Therefore $\gamma$ has tangent vector $\by_1$ at $\bv_1$ and $\by_2$ at $\bv_2$, and it follows that $\gamma = \mathrm{Span}\{\by_1,\by_2\} \cap \mathbb{H}^n$.  Taking $\bv_1 = a\by_1 + b\by_2$ and solving the equations $\bv_1 \circ \by_1 = 0$ and $\bv_1 \circ \bv_1 = -1$ (necessary for $\bv_1 \in \mathbb{H}^n$) for $a$ and $b$ yields the two solutions above. Taking an inner product with $\by_2$ now yields
\[ \bv_1\circ\by_2 = \pm \frac{1}{\|\by_1\|} \frac{\left(\by_1\circ \by_2\right)^2 - \|\by_1\|^2 \|\by_2\|^2}{\sqrt{(\by_1\circ \by_2)^2 - \|\by_1\|^2\|\by_2\|^2}} \]
By Lemma \ref{space-like corresp}, $\bv_1$ belongs to the half-space $H_2$ with $\by_2$ as outward normal if and only if $\bv_1\circ\by_2 < 0$, hence if and only if the ``$\pm$'' above is negative.\end{proof}

%%%%%%%%%%%%%
\section{Dimension two}\label{two}
%%%%%%%%%%%%%

Here we prove trigonometric formulas for a hyperbolic quadrilateral with two ideal vertices and a hyperbolic pentagon with one ideal vertex, each with right angles at all finite vertices.

\begin{proposition}\label{queue}  Let $Q\subset\mathbb{H}^2$ be a convex quadrilateral with a single compact side of length $\ell$ and right angles at its endpoints, and let $B_0$ and $B_1$ be horoballs centered at the two ideal vertices of $Q$.  If $a_i$ is the signed distance to $B_i$ from the other endpoint of the half-open edge of $Q$ containing the ideal point of $B_i$, $i=0,1$, and $d$ is the signed distance from $B_0$ to $B_1$, then
\[ \sinh(\ell/2) = e^{(d-a_0-a_1)/2}. \]
If $\theta_i$ is the length of the horocyclic arc $S_i\cap Q$, $i=0,1$, where $S_i = \partial B_i$, then for each $i$,
\begin{align*}  \frac{\theta_0}{e^{a_1}} = \frac{\theta_1}{e^{a_0}} = \frac{\sinh\ell}{2e^d} \end{align*}
\end{proposition}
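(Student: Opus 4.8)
The plan is to encode $Q$ with three vectors of $\mathbb{R}^3$: positive light-like vectors $\bx_0,\bx_1$ determining the horospheres $S_0, S_1$ in the sense of Definition \ref{horosphere}, so that $[\bx_i]$ is the ideal vertex at which $B_i$ is centered, and a unit space-like vector $\by$ whose polar hyperplane is the geodesic line $L$ carrying the compact side, oriented by Lemma \ref{space-like corresp} so that $Q\subseteq\{\bx\in\mathbb{H}^2 : \bx\circ\by\le 0\}$. Since $Q$ has a right angle at each endpoint of its compact side, the edge of $Q$ from such an endpoint $v_i$ to $[\bx_i]$ is the geodesic ray perpendicular to $L$ at $v_i$ limiting to $[\bx_i]$; thus it is a subray of the geodesic of Lemma \ref{to the hyperplane!} through $L$ perpendicular to it in the direction of $\bx_i$, and $v_i$ is the point of $L$ closest to $S_i$, with $a_i$ the resulting minimal signed distance. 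Lemma \ref{to the hyperplane!} then supplies $e^{a_i} = -\bx_i\circ\by$ together with the formula $v_i = -(\bx_i\circ\by)^{-1}\bx_i + \by$ from (\ref{at the hyperplane}), while Lemma \ref{to the other horoball!} supplies $e^d = -\tfrac12\,\bx_0\circ\bx_1$. (These are consistent: $v_i\circ\bx_i = \bx_i\circ\by$ since $\bx_i\circ\bx_i = 0$, matching the value $e^{a_i} = -v_i\circ\bx_i$ given by Lemma \ref{to the horoball!}.)

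The first identity is then a short computation. Using $\bx_i\circ\bx_i = 0$, $\by\circ\by = 1$ and $\by\circ\bx_i = \bx_i\circ\by$, the formulas for $v_0,v_1$ give $\cosh\ell = -v_0\circ v_1 = 1 - (\bx_0\circ\bx_1)/\big((\bx_0\circ\by)(\bx_1\circ\by)\big)$. Substituting the three pairings above makes the right-hand side $1 + 2e^{d-a_0-a_1}$, and since $\cosh\ell - 1 = 2\sinh^2(\ell/2)$ we conclude $\sinh(\ell/2) = e^{(d-a_0-a_1)/2}$.

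For the second identity I would first name the two endpoints of the arc $S_0\cap Q$. Its two bounding edges are the edge $v_0[\bx_0]$ above and the edge $[\bx_0][\bx_1] = \mathrm{Span}\{\bx_0,\bx_1\}\cap\mathbb{H}^2$ joining the ideal vertices; assuming $B_0$ is small enough that $S_0\cap Q$ is a single arc meeting only these two edges, its endpoints are $p_0 = e^{-a_0}\cosh(a_0)\,\bx_0 + e^{-a_0}\by$ (where the geodesic of Lemma \ref{to the hyperplane!} crosses $S_0$, namely at parameter $t=a_0$) and $q_0 = \tfrac12\bx_0 + \tfrac12 e^{-d}\bx_1$ (where the common-perpendicular geodesic of Lemma \ref{to the other horoball!} crosses $S_0$; this is the positive solution (\ref{at the other horoball}) with $\bx_0,\bx_1$ interchanged, and $q_0\circ\bx_0 = -1$ is a one-line check). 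Because $S_0$ is an isometric copy of $\mathbb{R}$, the arclength is $\theta_0 = d_{S_0}(p_0,q_0) = \sqrt{-2(1+p_0\circ q_0)}$, and expanding $p_0\circ q_0$ with $\bx_0\circ\bx_0 = 0$, $\bx_0\circ\bx_1 = -2e^d$, $\bx_0\circ\by = -e^{a_0}$, $\bx_1\circ\by = -e^{a_1}$ yields $\theta_0^2 = e^{-2a_0} + e^{a_1-a_0-d}$. Expanding $(e^{a_1}\sinh\ell/2e^d)^2$ with $\sinh^2\ell = \cosh^2\ell - 1$ and the value of $\cosh\ell$ found above gives the same expression, so $\theta_0/e^{a_1} = \sinh\ell/(2e^d)$; the substitution $0\leftrightarrow1$, which swaps $a_0\leftrightarrow a_1$ and fixes $d$, gives $\theta_1/e^{a_0} = \sinh\ell/(2e^d)$.

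The algebra is routine; the step requiring genuine care is the geometric bookkeeping for the second identity---pinning down exactly which two edges of $Q$ bound the arc $S_0\cap Q$ and confirming that $p_0$ and $q_0$ are precisely the points where $S_0$ crosses them (which is where convexity of $Q$ and smallness of $B_0$ enter), and tracking the signs of $a_0$, $a_1$, $d$ consistently across Lemmas \ref{to the horoball!}, \ref{to the other horoball!} and \ref{to the hyperplane!}. Everything else reduces to the three inner products $\bx_0\circ\bx_1$, $\bx_0\circ\by$, $\bx_1\circ\by$.
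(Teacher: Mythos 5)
Your proposal is correct and follows essentially the same route as the paper: it encodes $Q$ by the two positive light-like vectors $\bx_0,\bx_1$ and the unit space-like normal $\by$, reads off $e^{a_i}=-\bx_i\circ\by$ and $e^d=-\tfrac12\bx_0\circ\bx_1$ from Lemmas \ref{to the hyperblane!}\hspace{-\the\fontdimen2\font}\ref{to the hyperplane!} and \ref{to the other horoball!}, and computes $\cosh\ell=-\bv_0\circ\bv_1$ and the horocyclic endpoints exactly as in the paper's proof (your $p_0,q_0$ coincide with the paper's $\bu_0,\bu_0'$). The only difference is cosmetic: you substitute the exponentials before comparing $\theta_0^2$ with $(e^{a_1}\sinh\ell/2e^d)^2$, whereas the paper keeps everything in terms of the inner products; your explicit flagging of the positional assumptions on $S_0\cap Q$ is, if anything, slightly more careful than the original.
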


\begin{proof} For a quadrilateral $Q\subset\mathbb{H}^2$ with a single compact edge $\gamma$ and right angles at the endpoints of this edge, let $\bx_0$ and $\bx_1$ be positive light-like vectors determining the horobolls $B_0$ and $B_1$ centered at the ideal vertices of $Q$.  Using the fact that the geodesic containing $\gamma$ is a codimension-one hyperplane of $\mathbb{H}^2$, let $\by$ be the space-like vector Lorentz-orthogonal to this geodesic with the property that $\bx_i\circ\by<0$ for $i=0,1$.  (Since the ideal vertices of $Q$ are on the same side of this geodesic, the inner products with $\by$ have the same sign by Lemma \ref{to the hyperplane!}.)

Let $\bv_0$ and $\bv_1$ be the finite vertices of $Q$, numbered so that $\bv_i$ is an endpoint of the half-open edge of $Q$ with its other endpoint at the center of $B_i$, for $i=0,1$.  Since $Q$ is right-angled, $\bv_i$ is described in terms of $\bx_i$ and $\by$ by the formula (\ref{at the hyperplane}) for each $i$.  (Note that there is a unique geodesic ray perpendicular to the geodesic containing $\gamma$ with its ideal endpoint at the center of $B_i$, since there is no hyperbolic triangle with two right angles.)  That is:\begin{align*}
	& \bv_0 = \frac{-1}{\bx_0\circ\by}\bx_0 + \by && \bv_1 = \frac{-1}{\bx_1\circ\by}\bx_1 + \by \end{align*}
By Lemma \ref{to the hyperplane!} their signed distances $a_i$ to the $B_i$ satisfy $e^{a_i} = -\bx_i\circ\by$ for $i=0,1$.  If $\ell$ is the length of $\gamma$ then from the distance formula we obtain
\[ \cosh\ell = -\bv_1\circ\bv_2 = \frac{-\bx_0\circ\bx_1}{(\bx_0\circ\by)(\bx_1\circ\by)} + 1 \]
It follows from Lemma \ref{to the other horoball!} that the minimal signed distance $d$ from $S_0$ to $S_1$ satisfies $e^d = -\frac{1}{2}\bx_0\circ\bx_1$, hence by a half-angle formula $\sinh(\ell/2) = e^{(d-a_0-a_1)/2}$ as claimed.

Let $\bu_0$ and $\bu_0'$ be the points of intersection between the horosphere $S_0 = \partial B_0$ and the edges of $Q$ joining the class of $\bx_0$ to $\bv_0$ and the class of $\bx_1$, respectively.  We obtain an explicit description for $\bu_0$ by plugging in $t = a_0$ to the parametrized geodesic $\gamma(t)$ starting at $\bv_0$ given in Lemma \ref{to the hyperplane!}, and for $\bu_0'$ by plugging in $t=d/2$ to the parametrized geodesic $\lambda(t)$ from $\bx_1$ given in Lemma \ref{to the other horoball!}.  These yield:\begin{align*}
	& \bu_0 = \frac{1}{2}\left(1+\frac{1}{(\bx_0\circ\by)^2}\right)\bx_0 + \frac{-1}{\bx_0\circ\by} \by &
	& \bu_0' = \frac{1}{2}\bx_0 + \frac{-1}{\bx_0\circ\bx_1}\bx_1 \end{align*}
From the horospherical distance formula we thus have
\[ \theta_0 = d_{S_0}(\bu_0,\bu_0') = \sqrt{-2(1+\bu_0\circ\bu_0')} = \sqrt{\frac{1}{(\bx_0\circ\by)^2} - \frac{2(\bx_1\circ\by)}{(\bx_0\circ\bx_1)(\bx_0\circ\by)}} \]
A similar computation yields an analogous formula for $\theta_1$, and we observe that\begin{align*}
	\theta_0 e^{-a_1} & = \theta_1 e^{-a_0} = \sinh\ell/(2e^d) \\
		& = \frac{1}{(\bx_0\circ\by)(\bx_1\circ\by)}\sqrt{\frac{2(\bx_0\circ\by)(\bx_1\circ\by)-\bx_0\circ\bx_1}{-\bx_0\circ\bx_1}} \end{align*}
The latter assertion in the statement follows.
\end{proof}

\begin{proposition}\label{pee}  Let $P\subset\mathbb{H}^2$ be a convex pentagon with four right angles and one ideal vertex, and let $B$ be a horoball centered at the ideal vertex of $P$.  Let $d$ be the length of the side of $P$ opposite its ideal vertex, let $\bw_0$ and $\bw_1$ be its endpoints, and for $i=0,1$ let $\ell_i$ be the length of the other side containing $\bw_i$.  If $\bv_i$ is the other endpoint of this side and $a_i$ is its signed distance to $B$, for $i=0,1$, then
\[ \cosh \ell_i = \frac{e^{a_i}\cosh d + e^{a_{1-i}}}{e^{a_i}\sinh d}\quad \mbox{for}\ i=0,1. \]
Moreover, if $\theta$ is the length of the horocyclic arc $S\cap P$, where $S=\partial B$, then
\[ \frac{\theta}{\sinh d} = \frac{\sinh\ell_0}{e^{a_1}} = \frac{\sinh\ell_1}{e^{a_0}}. \] \end{proposition}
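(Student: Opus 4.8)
The plan is to follow the proof of Proposition~\ref{queue}, encoding $P$ with three vectors: a positive light-like $\bx$ determining $B$, and unit space-like $\by_0,\by_1$ taken to be the outward normals to the half-spaces $H_0$ and $H_1$ bounded by the geodesics $P_0\supset\bv_0\bw_0$ and $P_1\supset\bv_1\bw_1$ and containing $P$. First I would extract the content of the four right angles. Since the angle at $\bv_i$ is right, the edge from $\bv_i$ to the ideal vertex is the geodesic perpendicular to $P_i$ in the direction of $\bx$; Lemma~\ref{to the hyperplane!} shows this geodesic meets $P_i$ at the point of $P_i$ closest to $B$, so $e^{a_i}=-\bx\circ\by_i$ and, by formula~(\ref{at the hyperplane}), $\bv_i=e^{-a_i}\bx+\by_i$. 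Since the angles at $\bw_0$ and $\bw_1$ are right, the edge $\bw_0\bw_1$ is perpendicular to both $P_0$ and $P_1$, hence is their common perpendicular, of length $d>0$; so $\by_0,\by_1$ fall under case~(2) of Lemma~\ref{geometry of inner product} with $|\by_0\circ\by_1|=\cosh d$. Tracking which direction along $\bw_0\bw_1$ points into $P$ at each endpoint shows that $-\by_0$ and $\by_1$ are equally oriented tangent vectors of the directed geodesic from $\bw_0$ to $\bw_1$, so $\by_0\circ\by_1=-\cosh d$, and Lemma~\ref{perp point} (with the minus sign, as $\bw_i$ lies in the half-space $H_{1-i}$) gives $\bw_i=(\cosh d\,\by_i+\by_{1-i})/\sinh d$.

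Granting these formulas, the first displayed identity is just the evaluation of $\cosh\ell_i=-\bv_i\circ\bw_i$: expanding the inner product and substituting $\bx\circ\bx=0$, $\by_i\circ\by_i=1$, $\bx\circ\by_j=-e^{a_j}$, and $\by_0\circ\by_1=-\cosh d$ collapses it to $(\cosh d+e^{a_{1-i}-a_i})/\sinh d$, which equals the asserted $(e^{a_i}\cosh d+e^{a_{1-i}})/(e^{a_i}\sinh d)$.

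For the horocyclic arc, $S\cap P$ joins the points $\bu_0$ and $\bu_1$ at which $S$ crosses the two edges through the ideal vertex. Feeding $t=a_i$ into the geodesic $\gamma(t)=e^{-a_i}\cosh t\,\bx+e^{-t}\by_i$ of Lemma~\ref{to the hyperplane!} gives $\bu_i=e^{-a_i}\cosh a_i\,\bx+e^{-a_i}\by_i$, so by the horospherical distance formula $\theta^2=-2(1+\bu_0\circ\bu_1)$, and a short computation using $e^{-a}\cosh a=(1+e^{-2a})/2$ yields $\theta^2=e^{-2a_0}+e^{-2a_1}+2e^{-a_0-a_1}\cosh d$. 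I would finish by comparing this with $\sinh^2\ell_i=\cosh^2\ell_i-1$ from the first part: both $\theta^2/\sinh^2 d$ and $\sinh^2\ell_0/e^{2a_1}$ simplify to $(e^{2a_0}+e^{2a_1}+2e^{a_0+a_1}\cosh d)/(e^{2(a_0+a_1)}\sinh^2 d)$, and likewise $\sinh^2\ell_1/e^{2a_0}$ gives the same; since all quantities are positive, taking square roots delivers $\theta/\sinh d=\sinh\ell_0/e^{a_1}=\sinh\ell_1/e^{a_0}$.

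No step is deep; the work is in the sign bookkeeping. The places that need care are: confirming $\by_0\circ\by_1$ is negative rather than merely $\pm\cosh d$; picking the correct branch in Lemma~\ref{perp point} for $\bw_i$; and verifying that the hypotheses of Lemmas~\ref{to the hyperplane!} and~\ref{geometry of inner product}(2) really hold, namely that $P_0$ and $P_1$ are disjoint with positive-length common perpendicular (forced by $d>0$) and that the ideal vertex lies in the ideal boundary of each $H_i$ (it is a vertex of the convex pentagon $P\subset\overline{H_i}$). As in Proposition~\ref{queue}, I would also take for granted that $B$ is small enough that $S\cap P$ is the arc $\bu_0\bu_1$.
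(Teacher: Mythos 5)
Your proposal is correct and follows essentially the same route as the paper: encode the pentagon by the light-like vector $\bx$ and the two unit normals $\by_0,\by_1$, obtain $\bv_i$ from formula (\ref{at the hyperplane}), $\bw_i$ from the common-perpendicular formula (the paper cites Ratcliffe's Theorems 3.2.7--3.2.8 where you invoke Lemma \ref{perp point}, but these give the same expression), and then read off $\cosh\ell_i=-\bv_i\circ\bw_i$ and $\theta=\sqrt{-2(1+\bu_0\circ\bu_1)}$ with $\bu_i$ on the geodesic of Lemma \ref{to the hyperplane!}. Your sign bookkeeping ($\by_0\circ\by_1=-\cosh d$, the choice of branch for $\bw_i$) and the final algebraic verification match the paper's computation.
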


\begin{proof}  Let $P$ be a pentagon with four right angles and a single ideal vertex, and let $\bx$ be a positive light-like vector that determines a horosphere $S$ centered at the ideal vertex of $P$.  Labeling the endpoints of the edge $\delta$ of $P$ opposite its ideal vertex as $\bw_0$ and $\bw_1$, for $i=0,1$ let $\gamma_i$ be the other edge of $P$ containing $\bw_i$, and let $\by_i$ be a unit space-like vector in $\mathbb{R}^3$ orthogonal to the geodesic containing $\gamma_i$.  Choose the $\by_i$ so that $\by_i\circ\bx<0$ for each $i$.  Equivalently, by Lemma \ref{to the hyperplane!}, $\by_i$ is the outward normal to the half-space $H_i$ bounded by $\gamma_i$ and containing $\bx$ in its ideal boundary.  This half-space also contains $\gamma_{1-i}$ in its interior; therefore by construction, $\by_0$ and $\by_1$ are oppositely-pointing tangent vectors to $\delta$, so by Lemma \ref{geometry of inner product}(2), $\by_0\circ\by_1 < 0$.

%As in the proofs of Theorem 3.2.7 and 3.2.8 of \cite{Ratcliffe} we have the following explicit formula for $\bw_i$:
Lemma \ref{perp point} gives the formula below for the $\bw_i$, bearing in mind that for each $i$, $\bw_i\in H_{1-i}$.
\[ \bw_i = \frac{-(\by_0\circ\by_1)\by_i + \by_{1-i}}{\sqrt{(\by_0\circ\by_1)^2-1}}, \]
%where ``$+$'' or ``$-$'' is chosen so that $\bw_i$ is a positive vector.  
Let us call $\bv_i$ the endpoint of $\gamma_i$ not equal to $\bw_i$, for $i=0,1$.  An explicit formula for $\bv_i$ is given by (\ref{at the hyperplane}), with $\by$ there replaced by $\by_i$.  For, say, $i=0$ we thus have
\[ \bw_0\circ\bv_0 = \frac{\by_0\circ\by_1 - (\bx\circ\by_1)/(\bx\circ\by_0)}{\pm\sqrt{(\by_0\circ\by_1)^2-1}} = \frac{-(\bx\circ\by_0)(\by_0\circ\by_1)+\bx\circ\by_1}{-(\bx\circ\by_0)\sqrt{(\by_0\circ\by_1)^2-1}} \]
%In passing from the first to the second equality we have fixed the sign choice ``$+$'' for the radical.  This is the right choice since $\by_0\circ\by_1$ and the $\bx\circ\by_i$ are all negative, and $\bw_0\circ\bv_0$ is as well.

If $\ell_i$ is the length of $\gamma_i$ and $a_i$ is the distance from $\bv_i$ to $S$, for $i=0,1$, and $d = d_H(\bw_0,\bw_1)$ is the length of the side opposite the ideal vertex, then the above equation becomes
\[ \cosh \ell_0 = \frac{e^{a_0}\cosh d +e^{a_1}}{e^{a_0}\sinh\ell} \]
This is because $\cosh\ell = -\bw_0\circ\bv_0$ by definition, $d_H(\bv_i,S) = -\bx\circ\by_i$ by Lemma \ref{to the horoball!}, and as can be explicitly checked, $\cosh d = -\bw_0\circ\bw_1 = -\by_0\circ\by_1$. The derivation of the formula for $\cosh\ell_1$ is analogous, and we have proved the hyperbolic law of cosines.

For the law of sines we first note that the point of intersection $\bu_i$ between $S$ and the geodesic from $\bv_i$ in the direction of $\bx$ is given by the formula (\ref{at the horoball}), with $\bv$ there replaced by $\bv_i$, for $i=0,1$.  From direct calculation and/or Lemma \ref{to the hyperplane!} we have $\bv_i\circ\bx = \by_i\circ\bx$, whence for each $i$ we have\begin{align*}
	 \bu_i = \frac{1}{2}\left(1+\frac{1}{(\bx\circ\by_i)^2}\right)\bx + \frac{-1}{\bx\circ\by_i} \by_0 \end{align*}
From this we obtain the following formula for the length $\theta$ of the horocyclic arc $S\cap P$:
\[ \theta = \sqrt{-2(1+\bu_0\circ\bu_1)} = \frac{\sqrt{(\bx\circ\by_0)^2+(\bx\circ\by_1)^2-2(\by_0\circ\by_1)(\bx\circ\by_0)(\bx\circ\by_1)}}{(\bx\circ\by_0)(\bx\circ\by_1)} \]
Direct computation now establishes this case of the hyperbolic law of sines.
\end{proof}

\begin{proposition}\label{ideal tri}\IdealTri\end{proposition}

\begin{proof} Let $\bx_1$, $\bx_2$, and $\bx_3$ be positive light-like vectors respectively determining $B_1$, $B_2$, and $B_3$, and for each $i< j$ let $\lambda_{ij}(t) = \frac{1}{2}e^{-d_{ij}/2}\left(e^t \bx_i + e^{-t}\bx_j\right)$ be the geodesic from $\bx_j$ to $\bx_i$ as in Lemma \ref{to the other horoball!}. By that result, the points of intersection $\bu_{12} = \lambda_{12}\cap S_1$ and $\bu_{13} = \lambda_{13}\cap S_1$ are given as:
\[ \bu_{12} = \frac{1}{2}\left(\bx_1 + e^{-d_{12}}\bx_2\right)\quad\mbox{and}\quad \bu_{13} = \frac{1}{2}\left(\bx_1 + e^{-d_{13}}\bx_3\right). \]
Using the fact that for $i\ne j$, $\bx_i\circ\bx_j = -2e^{d_{ij}}$ (again by Lemma \ref{to the other horoball!}), we obtain 
\[ \bu_{12}\circ\bu_{13} = -1 - \frac{e^{d_{23}}}{2e^{d_{12}}e^{d_{13}}} \quad\Rightarrow\quad \theta_1 = \sqrt{\frac{e^{d_{23}}}{e^{d_{12}}e^{d_{13}}} }. \]
The formula for $\theta_1$ above comes from (\ref{euc dist}). It is the ``First Law of Cosines" above. Formulas for $\theta_2$ and $\theta_3$ are completely analogous, and from these we obtain the ``Law of Sines'':
\[ \frac{\theta_1}{e^{d_{23}}} = \sqrt{\frac{1}{e^{d_{12}}e^{d_{13}}e^{d_{23}}} } = \frac{\theta_2}{e^{d_{13}}} = \frac{\theta_3}{e^{d_{12}}} \]
For the ``Second Law of Cosines'' above we simply multiply the formulas for $\theta_2$ and $\theta_3$ and solve for $e^{d_{23}}$.
\end{proof}

%%%%%%%%%%%%%%%%%%%%%%%%%%%%%%%
\section{Dimension three: transversals of truncated tetrahedra}\label{three}
%%%%%%%%%%%%%%%%%%%%%%%%%%%%%%

We turn now to dimension three, in which hyperplanes are \textit{planes}, ie.~two-dimensional totally geodesic copies of $\mathbb{H}^2$. Here we consider the \textit{truncated tetrahedron} determined by a collection of pairwise disjoint and non-parallel planes $P_1$, $P_2$, $P_3$, $P_4$ such that for each $i$, a single half-space $H_i$ bounded by $P_i$ contains $P_j$ for all $j\ne i$. We define truncated tetrahedra and their \textit{transversals} in \ref{trunch teth}, and subsequently prove trigonometric formulas about transversal lengths.

It is a standard fact, proved in eg.~\cite[Lemma 2.3]{DeSha}, that for any collection of three disjoint planes in $\mathbb{H}^3$ there is a unique fourth plane meeting each of the original three at right angles. The next result re-establishes this in the present setting, for completeness, and it identifies a key half-space bounded by such a plane.

\begin{lemma}\label{one side} Suppose $P_1$, $P_2$, $P_3$, $P_4$ are pairwise disjoint and non-parallel planes in $\mathbb{H}^3$ such that for each $i$, a single half-space $H_i$ bounded by $P_i$ contains $P_j$ for all $j\ne i$. For any fixed $i$, there is a unique plane $\widehat{P}_i$ that intersects $P_j$ at right angles for each $j\neq i$. If $P_i$ does not meet $\widehat{P}_i$ orthogonally then there is a single half-space $\widehat{H}_i$ bounded by $\widehat{P}_i$ such that for all $j\ne i$, $\widehat{H}_i$ contains the shortest geodesic arc from $P_j$ to $P_i$.\end{lemma}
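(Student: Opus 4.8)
The plan is to work entirely with the unit space-like normal vectors and reduce the statement to a short sign computation. For each index $k$, let $\by_k$ be the unit outward normal to $H_k$, so that $H_k = \{\bx\in\mathbb{H}^3 \co \bx\circ\by_k\le 0\}$ by Lemma \ref{space-like corresp}, and let $\widehat{\by}_i$ be a unit normal to $\widehat{P}_i$. Fix $i$. Since $\widehat{P}_i$ meets $P_j$ at right angles for each $j\ne i$, case (1) of Lemma \ref{geometry of inner product} gives $\widehat{\by}_i\circ\by_j = 0$ for all $j\ne i$; and since $P_i$ does not meet $\widehat{P}_i$ orthogonally, $\widehat{\by}_i\circ\by_i\ne 0$ (were it zero, case (1) of that lemma would force $P_i$ and $\widehat{P}_i$ to meet orthogonally), so I may normalize the sign of $\widehat{\by}_i$ so that $\widehat{\by}_i\circ\by_i < 0$. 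I then claim the half-space $\widehat{H}_i \doteq \{\bx\in\mathbb{H}^3 \co \bx\circ\widehat{\by}_i\le 0\}$ has the required property.

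The first step is to record that $\by_i\circ\by_j < 0$ for each $j\ne i$. As $P_i$ and $P_j$ are disjoint and non-parallel, they fall into case (2) of Lemma \ref{geometry of inner product}, so they have a common perpendicular geodesic $\gamma$ whose unit tangent at its foot on $P_k$ is $\pm\by_k$, for $k\in\{i,j\}$. Because $\by_i$ is outward to $H_i$ while $P_j\subset H_i$, moving from the foot of $\gamma$ on $P_i$ in the direction $-\by_i$ leads into $H_i$, hence toward $P_j$; symmetrically, $-\by_j$ at the foot on $P_j$ leads toward $P_i$. Thus, with respect to the orientation of $\gamma$ pointing from $P_i$ to $P_j$, the tangent is $-\by_i$ at one foot and $+\by_j$ at the other, so $\by_i$ and $\by_j$ are oppositely oriented tangent vectors to $\gamma$ and case (2) of Lemma \ref{geometry of inner product} yields $\by_i\circ\by_j < 0$.

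Next, for each $j\ne i$ let $\bu_j\in P_j$ and $\bv_j\in P_i$ be the endpoints of the common perpendicular of $P_i$ and $P_j$, so that the shortest geodesic arc from $P_j$ to $P_i$ is the segment from $\bu_j$ to $\bv_j$ (Lemma \ref{perp point}). Applying Lemma \ref{perp point} with $\|\by_i\|=\|\by_j\|=1$, where its sign rule resolves via $\bu_j\in P_j\subset H_i$ and $\bv_j\in P_i\subset H_j$, gives
\[ \bu_j = \frac{\by_i - (\by_i\circ\by_j)\by_j}{\sqrt{(\by_i\circ\by_j)^2-1}}, \qquad \bv_j = \frac{\by_j - (\by_i\circ\by_j)\by_i}{\sqrt{(\by_i\circ\by_j)^2-1}}. \]
Pairing with $\widehat{\by}_i$ and using $\widehat{\by}_i\circ\by_j = 0$, one finds
\[ \bu_j\circ\widehat{\by}_i = \frac{\by_i\circ\widehat{\by}_i}{\sqrt{(\by_i\circ\by_j)^2-1}}, \qquad \bv_j\circ\widehat{\by}_i = \frac{-(\by_i\circ\by_j)(\by_i\circ\widehat{\by}_i)}{\sqrt{(\by_i\circ\by_j)^2-1}}, \]
both of which are negative, since $\by_i\circ\widehat{\by}_i<0$ and $\by_i\circ\by_j<0$. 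Every point of the segment from $\bu_j$ to $\bv_j$ is a non-negative combination of its endpoints---the point at arclength $t\in[0,\ell_j]$ along $\gamma$ being $\tfrac{\sinh(\ell_j-t)}{\sinh\ell_j}\,\bu_j + \tfrac{\sinh t}{\sinh\ell_j}\,\bv_j$, with $\ell_j$ the length of the arc---so its Lorentz product with $\widehat{\by}_i$ is negative as well. Hence the shortest arc from $P_j$ to $P_i$ lies in $\widehat{H}_i$ for every $j\ne i$, which is the assertion.

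The sign bookkeeping is where I expect the real work to lie: one must extract $\by_i\circ\by_j<0$ from the containment hypothesis on the half-spaces $H_k$, and pin down the $\pm$ of Lemma \ref{perp point} based on which side of each plane the relevant foot lies. It is exactly the combination of these two facts with the chosen orientation of $\widehat{\by}_i$ that forces $\bu_j\circ\widehat{\by}_i$ and $\bv_j\circ\widehat{\by}_i$ to come out negative simultaneously for all three $j\ne i$; everything else is routine.
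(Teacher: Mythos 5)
Your proof is correct and follows essentially the same route as the paper's: both hinge on Lemma \ref{perp point} together with the orthogonality $\widehat{\by}_i\circ\by_j=0$ (for $j\ne i$) to show that the Lorentz pairing of each perpendicular foot with the normal to $\widehat{P}_i$ is a positive multiple of $\by_i\circ\widehat{\by}_i$ and hence has a sign independent of $j$. The only cosmetic differences are that you fix the sign of $\widehat{\by}_i\circ\by_i$ up front from the non-orthogonality hypothesis (the paper derives it a posteriori from containment of a single foot), and you get the whole arc into $\widehat{H}_i$ by writing it as a nonnegative combination of its endpoints where the paper argues synthetically that an arc perpendicular to $P_j$ with foot off $\widehat{P}_i$ cannot cross $\widehat{P}_i$.
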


\begin{proof} For each $j\in\{1,2,3,4\}$ let $\by_j$ be an outward unit normal, in the sense described below Lemma \ref{space-like corresp}, to the half-space bounded by $P_j$ that contains each other $P_{j'}$. As observed in the proof of Lemma \ref{perp point}, for any $j\ne j'$, $\by_j$ and $\by_{j'}$ are tangent vectors to the hyperbolic geodesic intersecting $P_j$ and $P_{j'}$ perpendicularly, which has the form $\mathrm{Span}\{\by_j,\by_{j'}\}\cap\mathbb{H}^n$. By their choices they are oppositely-oriented, so $\by_j\circ\by_{j'} < 0$ by Lemma \ref{geometry of inner product}. 

For a fixed $i\in\{1,2,3,4\}$, let $\widehat{P}_i$ be the intersection with $\mathbb{H}^3$ of the span of the set of $\by_j$ for $j\ne i$. By the above, $\widehat{P}_i$ contains the mutual perpendicular geodesic to $P_j$ and $P_{j'}$, for any such distinct $j$ and $j'$, so it meets each such $P_j$ at right angles. Conversely, any plane $\widehat{P}$ that intersects any such $P_j$ at right angles contains its normal $\by_j$, since this is the tangent vector to a geodesic in $\widehat{P}$ that is perpendicular to $P_j$ at its point of intersection with $P_j$. Therefore $\widehat{P} = \widehat{P}_i$.

For each $j\ne i$, let $\bv_j$ be the point of intersection between $P_j$ and the geodesic intersecting it and $P_i$ perpendicularly. For each $j$, Lemma \ref{perp point} gives:\begin{align}\label{perp point spec}
	\bv_j = - \frac{\left(\by_i\circ \by_j\right)\by_j - \by_i}{\sqrt{(\by_i\circ \by_j)^2 - 1}}. \end{align}
Note that if any such $\bv_j$ was contained in $\widehat{P}_i$ then, since both $\widehat{P}_i$ and the shortest geodesic arc from $\bv_j$ to $P_i$ intersect $P_j$ at right angles, this entire geodesic arc would be contained in $\widehat{P}_i$. But then $P_i$ would also intersect $\widehat{P}_i$ at right angles, at the other endpoint of this geodesic arc. So because $P_i$ does not intersect $\widehat{P}_i$ at right angles by hypothesis, no such $\bv_j$ is contained in $\widehat{P}_i$.

Now fix some $j\ne i$, let $\widehat{H}_i$ be the half-space bounded by $\widehat{P}_i$ that contains $\bv_j$, and let $\bz_i$ be its outward normal, as described in Lemma \ref{space-like corresp}. As noted in the first paragraph above, for any $j'\ne j,i$, $\by_j$ is a tangent vector to the geodesic meeting $P_j$ and $P_{j'}$ perpendicularly. This geodesic lies in $\widehat{P}_i$, so $\by_j$ is a tangent vector to $\widehat{P}_i$ and is therefore orthogonal to $\bz_i$. Thus by (\ref{perp point spec}):
\[ \bz_i \circ \bv_j = \frac{\bz_i\circ\by_i}{\sqrt{(\by_i\circ \by_j)^2 - 1}} \]
Since $\bv_j$ is in the interior of $\widehat{H}_i$, $\bz_i\circ\bv_j<0$ by Lemma \ref{space-like corresp}. The equation above therefore gives $\bz_i\circ\by_i < 0$ as well. But the latter quantity does not depend on $j$, so this implies that $\bz_i\circ\bv_{j'}<0$, and hence that $\bv_{j'}\in\widehat{H}_i$ for all $j'\ne i$. The Lemma now follows from the fact that the shortest geodesic arc from any $P_j$ to $P_i$ does not not cross $\widehat{P}_i$, since each intersects $P_j$ at right angles.\end{proof}

We now consider the complementary case to that of Lemma \ref{one side}, for planes $P_1,P_2,P_3,P_4$ satisfying its hypotheses: if there exists an $i$ such that $P_i$ intersects $\widehat{P}_i$, defined as in the Lemma, at right angles, then the single plane $\widehat{P} \doteq \widehat{P}_i$ intersects all four planes at right angles and thus also equals $\widehat{P}_j$ for each $j\ne i$. The definition below accommodates both cases.

\begin{definition}\label{trunch teth} Suppose $P_1$, $P_2$, $P_3$, $P_4$ are pairwise disjoint and non-parallel planes in $\mathbb{H}^3$ such that for each $i$, a single half-space $H_i$ bounded by $P_i$ contains $P_j$ for all $j\ne i$. If for some (hence all) $i$, $\widehat{P}_i$ as in Lemma \ref{one side} does not meet $P_i$ orthogonally for any $i$, the \textit{truncated tetrahedron} determined by the $P_i$ is
\[ \Delta = \left(\bigcap_{i=1}^4 H_i  \right)\cap \left(\bigcap_{i=1}^4 \widehat{H}_i \right), \]
where $\widehat{H}_i$ is the half-space supplied by Lemma \ref{one side} for each $i$.

If $\widehat{P}_i$ does meet $P_i$ orthogonally for some $i$, then taking $\widehat{P} = \widehat{P}_i$ to be the unique plane that intersects each $P_i$ at right angles, and renumbering the $P_i$ so that the perpendicular geodesic to $P_1$ and $P_3$ separates $P_2\cap\widehat{P}$ from $P_4\cap\widehat{P}$, we define $\Delta$ as a \textit{degenerate truncated tetrahedron}  by:
\[ \Delta = \widehat{P}\cap \left(\bigcap_{i=1}^4 H_i  \right)\cap h_{12} \cap h_{23}\cap h_{34}\cap h_{14}, \] 
where $h_{12}$ is the half-plane in $\widehat{P}$ that is bounded by the perpendicular geodesic to $P_1$ and $P_2$ that contains $P_4\cap\widehat{P}$ (hence also $P_3\cap \widehat{P}$); and so on.

For each $i<j\le 4$, denote the shortest arc in $\mathbb{H}^3$ joining $P_i$ to $P_j$ as $\lambda_{ij}$ and call it an \textit{internal edge} of $\Delta$. The internal edge \textit{opposite} $\lambda_{ij}$ is $\lambda_{kl}$, where $k<l\in\{1,2,3,4\}-\{i,j\}$. For each $i$, the  \textit{internal face opposite $P_i$} is the right-angled hexagon in $\Delta\cap\widehat{P}_i$ bounded by the internal edges $\lambda_{jk}$, for each pair $j<k\in\{1,2,3,4\}-\{i\}$, and arcs of the $P_j$, $j\ne i$. The non-internal faces and edges of $\Delta$ are \textit{external}. Each of these is entirely contained in $P_i$ for some $i$. 

The \textit{transversal} of $\Delta$ joining an internal edge $\lambda_{ij}$ to its opposite $\lambda_{kl}$ is the shortest geodesic arc with one endpoint on each edge; or if these edges intersect, it is their point of intersection.\end{definition}

Note that if for some $i< j$, $\lambda_{ij}$ intersects its opposite edge $\lambda_{kl}$, then $\Delta$ is degenerate since the plane containing both $\lambda_{ij}$ and $\lambda_{kl}$ intersects all four $P_i$ orthogonally. Conversely, if $\Delta$ is degenerate then it is a right-angled octagon in $\widehat{P}$, and with the $P_i$ numbered as in this case of Definition \ref{trunch teth}, the opposite edges $\lambda_{13}$ and $\lambda_{24}$ do intersect.

In the non-degenerate case, each internal face of $\Delta$ is of the form $\Delta\cap\widehat{P}_i$ for some $i$, and each edge $\lambda_{ij}$ is the intersection of the internal faces contained in $\widehat{P}_k$ and $\widehat{P}_{l}$ for $k<l\in\{1,2,3,4\}-\{i,j\}$. In this case, $\Delta$ is homeomorphic to the complement in a tetrahedron of the union of small regular neighborhoods of the vertices; see Figure \ref{trunc tet}. 

\begin{figure}

\begin{tikzpicture}[scale=0.75]

\draw [dashed] (0,0) -- (1.5,2) -- (3,0) -- (2,-1) -- cycle;
\draw [dashed] (1.5,2) -- (2,-1);
\draw [dashed] (0,0) -- (3,0);

\fill [color=white] (0.24,0) circle [radius=0.1];
\fill [color=white] (2.56,0) circle [radius=0.1];

\draw [very thick, dashed] (0.24,-0.12) -- (0.44,0) -- (2.56,0) -- (2.76,-0.24);
\draw [very thick, dashed] (0.44,0) -- (0.24,0.32);
\draw [very thick, dashed] (2.56,0) -- (2.76,0.32);
\fill [color=white] (1.83,0) circle [radius=0.1];
\draw [very thick] (1.2,1.6) -- (1.8,1.6) -- (2.76,0.32) -- (2.76,-0.24) -- (2.2,-0.8) -- (1.6,-0.8) -- (0.24,-0.12) -- (0.24,0.32) -- cycle;
\draw [very thick] (1.2,1.6) -- (1.59,1.46) -- (1.8,1.6);
\draw [very thick] (1.6,-0.8) -- (1.93,-0.58) -- (2.2,-0.8);
\draw [very thick] (1.59,1.46) -- (1.93,-0.58);

\node [right] at (1.5,2) {\small $\by_1$};
\node [right] at (3,0) {\small $\by_4$};
\node [left] at (0,0) {\small $\by_2$};
\node [left] at (2,-1) {\small $\by_3$};

\fill (1.2,1.6) circle [radius=0.07];
\node [left] at (1.2,1.6) {\scriptsize $\bv_1$};
\fill (2.2,-0.8) circle [radius=0.07];
\node [right] at (2.2,-0.8) {\scriptsize $\bv_3$};

\end{tikzpicture}

\caption{A truncated tetrahedron, with missing ``vertices'' labeled by space-like vectors. Notation as in the proof of Lemma \ref{shrinking transversals}.}
\label{trunc tet}
\end{figure}
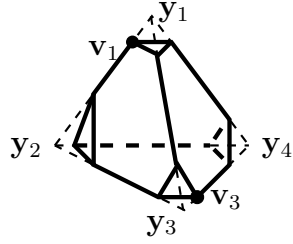

The main results of this section record some observations about the lengths of transversals of truncated tetrahedra. Before embarking on this we record the following basic calculus fact.

\begin{lemma}\label{or test sharc} Any positive linear combination $D(s,t) = C_{++} e^se^t + C_{+-}e^s e^{-t} + C_{-+} e^{-s}e^t + C_{--} e^{-s}e^{-t}$ of the functions $e^se^t$, $e^se^{-t}$, $e^{-s}e^t$, and $e^{-s}e^{-t}$ is strictly convex and has a unique critical point $(s_0,t_0)$, at which it attains an absolute minimum. It satisfies:
\begin{align}\label{crit loc}
	& e^{s_0} = \left(\frac{C_{-+}C_{--}}{C_{+-}C_{++}}\right)^{1/4}, & & e^{t_0} = \left( \frac{C_{+-}C_{--}}{C_{-+}C_{++}} \right)^{1/4}, && \frac{D(s_0,t_0)}{2} = \sqrt{C_{++}C_{--}} + \sqrt{C_{+-}C_{-+}}.
\end{align}
Furthermore, if $0 < C_{++} < C_{+-}, C_{-+} < C_{--}$ then $(s_0,t_0)\in (0,\infty)^2$.\end{lemma}

\begin{proof} The Hessian of each of $e^se^t$ and $e^{-s}e^{-t}$ is a scalar multiple of $\left(\begin{smallmatrix} 1 & 1 \\ 1 & 1 \end{smallmatrix}\right)$, and that of each of $e^se^{-t}$ and $e^{-s}e^t$ is a multiple of $\left(\begin{smallmatrix} 1 & -1 \\ -1 & 1 \end{smallmatrix}\right)$. Each of these matrices has $0$ and $2$ as eigenvalues and $\left(\begin{smallmatrix} 1 \\ 1 \end{smallmatrix}\right)$ and $\left(\begin{smallmatrix} 1 \\ -1 \end{smallmatrix}\right)$ as eigenvectors, but the null eigenvector of either is associated to the eigenvalue $2$ of the other. It follows that $D(s,t)$, being a positive linear combination of the four functions, has positive-definite Hessian and therefore is strictly convex.

We note that $D(s,t)\to\infty$ as $(s,t)\to\infty$. One can see this using the first summand $C_{++}e^se^t$ for $(s,t)$ in the first quadrant; the summand $C_{-+}e^{-s}e^t$ for $(s,t)$ in the second; $C_{--}e^{-s}e^{-t}$ for $(s,t)$ in the third; and $C_{+-}e^se^{-t}$ for $(s,t)$ in the fourth. $D(s,t)$ therefore attains an absolute minimum at a critical point, which is unique by convexity. To identify the critical point of $D(s,t)$ as defined above, we compute:\begin{align*}
	\partial_s D(s,t) & = C_{++} e^se^t + C_{+-}e^se^{-t} - C_{-+}e^{-s}e^t - C_{--}e^{-s}e^{-t} \\
	\partial_t D(s,t) & = C_{++} e^se^t - C_{+-}e^se^{-t} + C_{-+}e^{-s}e^t - C_{--}e^{-s}e^{-t}. \end{align*}
Setting each of these equal to $0$, subtracting bottom from top, and multiplying by $e^{s_0}e^{t_0}/2$ yields $0 = C_{+-} e^{2s_0} - C_{-+} e^{2t_0}$, giving $e^{2s_0} = e^{2t_0}C_{-+}/C_{+-}$ for the critical point $(s_0,t_0)$. Adding the two equations and multiplying through by $e^{s_0}e^{t_0}/2$ yields $0 = C_{++} e^{2s_0}e^{2t_0} - C_{--}$. After substituting for $e^{2s_0}$ we solve for $e^{2t_0}$, then plug the result back in to obtain the formulas of (\ref{crit loc}). 

The sequence of inequalities in this result's statement that starts with $0 < C_{++}$ implies that $C_{--}/C_{++}$ is greater than both $C_{+-}/C_{-+}$ and its reciprocal $C_{-+}/C_{+-}$, which in turn implies that the quantities in parenthesis in (\ref{crit loc}) are each greater than $1$. Therefore if these inequalities hold then each of $s_0$ and $t_0$ is greater than $0$. 
\end{proof}

\begin{lemma}\label{shrinking transversals} Suppose $P_1$, $P_2$, $P_3$, $P_4$ are pairwise disjoint and non-parallel planes in $\mathbb{H}^3$ such that for each $i$, a single half-space $H_i$ bounded by $P_i$ contains $P_j$ for all $j\ne i$, and let $\Delta$ be the truncated tetrahedron determined by the $P_i$ as in Definition \ref{trunch teth}. For any fixed $i<j\in\{1,2,3,4\}$ and $k<l\in\{1,2,3,4\}-\{i,j\}$, let $\tilde\lambda_{ij}$ and $\tilde\lambda_{kl}$ be the geodesics respectively intersecting $P_i$ to $P_j$, and $P_k$ and $P_l$, at right angles. Fixing parametrizations $\tilde\lambda_{ij}(s)$ and $\tilde\lambda_{kl}(t)$ by arclength:\begin{enumerate}
	\item the function $D(s,t)$ that records the hyperbolic cosine of the distance from $\tilde\lambda_{ij}(s)$ to $\tilde\lambda_{kl}(t)$ has a unique critical point $(s_0,t_0)$ in $\mathbb{R}^2$, at which it attains an absolute minimum;
	\item the absolute minimum value $D(s_0,t_0)$ depends only on the pairwise distances $\ell_{i'j'}$ between the $P_{i'j'}$, for $i'\ne j'\in\{1,2,3,4\}$;
	\item the points $\tilde\lambda_{ij}(s_0)$ and $\tilde\lambda_{kl}(t_0)$ lie in $\Delta$, so $D(s_0,t_0)$ is the hyperbolic cosine of a transversal length of $\Delta$; and
	\item written as $T(x,y;a,b,c,d)$, where $x = \cosh\ell(\lambda_{ij})$, $y = \cosh \ell(\lambda_{kl})$, and $a$, $b$, $c$, $d$ are hyperbolic cosines of the other four internal edge lengths of $\Delta$, this transversal length is invariant under even involutions of $\{a,b,c,d\}$, and strictly increasing in each of these variables, for any fixed $x,y>1$.
\end{enumerate}\end{lemma}

Convexity of the distance between two parametrized geodesics is well-known in broad generality (see eg.~\cite[Prop.~2.2]{BridsonHaefl} for the CAT(0) context). Conclusion (3) above is implied by \cite[Prop.~2.7]{FrigePet}. However our unified proof of all conclusions, which follows the general perspective taken in this note, will set us up to prove this section's main result.

\begin{proof} Similarly to the proof of Lemma \ref{one side}, for each $i\in\{1,2,3,4\}$ let $\by_i$ be an outward unit normal, in the sense described below Lemma \ref{space-like corresp}, to the half-space $H_i$ bounded by $P_i$ that contains each other $P_j$. For any $j\ne i$, $\by_i$ and $\by_{j}$ are then oppositely-oriented tangent vectors to the hyperbolic geodesic intersecting $P_i$ and $P_{j}$ perpendicularly, so $\by_i\circ\by_{j} < 0$ by Lemma \ref{geometry of inner product}. Furthermore, by this result the hyperbolic cosine of the distance from $P_i$ to $P_j$, which we will here denote $L_{ij}$, satisfies $L_{ij} = -\by_i\circ\by_j$ for each $i<j\in\{1,2,3,4\}$. Since the $P_i$ are labeled arbitrarily, we may take $(i,j,k,l) = (1,2,3,4)$ without loss of generality. 

Let $\bv_1=\tilde\lambda_{12}(0)$ and $\bv_3= \tilde\lambda_{34}(0)$ be the points of intersection $\tilde\lambda_{12}\cap P_1$ and $\tilde\lambda_{34}\cap P_3$, respectively. Then $\tilde\lambda_{12}$ is parametrized by arclength as $\tilde\lambda_{12}(s) = \cosh s\,\bv_1 - \sinh s\, \by_1$ starting at $\bv_1$ and running into $H_1$ using (\ref{arbitrary}), since $\by_1$ is outward-pointing from $H_1$, and likewise $\tilde\lambda_{34}(t) = \cosh t\,\bv_3 - \sinh t\,\by_3$ starts at $\bv_3$ and runs into $H_3$. $D(s,t)$ described above thus satisfies
\begin{align} D(s,t) & = - (\cosh s\,\bv_1 - \sinh s\, \by_1)\circ(\cosh t\,\bv_3 - \sinh t\,\by_3) \nonumber\\
	& = -\cosh s\cosh t\, (\bv_1\circ\bv_3) + \cosh s\sinh t\, (\bv_1\circ\by_3) \label{suzie schmuckaroozie}\\
	& \qquad + \sinh s \cosh t\, (\by_1\circ\bv_3) - \sinh s\sinh t\, (\by_1\circ\by_3) \nonumber \\
	& = C_{++} e^se^t + C_{+-}e^se^{-t} + C_{-+}e^{-s}e^t + C_{--}e^{-s}e^{-t}, \nonumber \end{align}
where:\begin{align}
	C_{++} & = \frac{1}{4}[-\bv_1\circ\bv_3 + \bv_1\circ\by_3 + \by_1\circ\bv_3 - \by_1\circ\by_3] \label{C plus plus} \\
	C_{+-} & = \frac{1}{4}[-\bv_1\circ\bv_3 - \bv_1\circ\by_3 + \by_1\circ\bv_3 + \by_1\circ\by_3] \nonumber \\
	C_{-+} & = \frac{1}{4}[-\bv_1\circ\bv_3 + \bv_1\circ\by_3 - \by_1\circ\bv_3 + \by_1\circ\by_3] \nonumber \\
	C_{--} & = \frac{1}{4}[-\bv_1\circ\bv_3 - \bv_1\circ\by_3 - \by_1\circ\bv_3 - \by_1\circ\by_3] \nonumber \end{align}
We now record each inner product above in terms of the distances $L_{ij}$ from $P_i$ to $P_j$ by substituting the formulas for $\bv_1$ and $\bv_3$ from Lemma \ref{perp point}.
\begin{align}
	& \bv_1\circ\bv_3 = - \frac{L_{12}L_{13}L_{34} + L_{23}L_{34} + L_{12}L_{14} + L_{24}}{\sqrt{(L_{12}^2 - 1)(L_{34}^2-1)}}, \label{blib} \\
	& \bv_1\circ\by_3 = - \frac{L_{12}L_{13}+L_{23}}{\sqrt{L_{12}^2 - 1}}, \quad 
		\by_1\circ\bv_3 = - \frac{L_{13}L_{34}+L_{14}}{\sqrt{L_{34}^2-1}}, \quad \by_1\circ\by_3 = -L_{13} \nonumber \end{align}
We claim that $0< C_{++} < C_{+-}, C_{-+} < C_{--}$. We first consider $C_{++}$. Plugging in for the terms of (\ref{C plus plus}) and rearranging yields:\begin{align*}
	\frac{1}{4} & \left[  L_{13}\left( \frac{L_{12}L_{34}}{\sqrt{(L_{12}^2-1)(L_{34}^2-1)}} - \frac{L_{12}}{\sqrt{L_{12}^2-1}} - \frac{L_{34}}{\sqrt{L_{34}^2-1}} + 1\right) \right. \\
		& \quad \left. + \frac{L_{23}}{\sqrt{L_{12}^2-1}} \left(\frac{L_{34}}{\sqrt{L_{34}^2-1}}-1\right) + \frac{L_{14}}{\sqrt{L_{34}^2-1}} \left(\frac{L_{12}}{\sqrt{L_{12}^2-1}}-1\right) + 
			\frac{L_{24}}{\sqrt{(L_{12}^2-1)(L_{34}^2-1)}} \right] \end{align*}
Here, bearing in mind that each $L_{ij}$ is greater than one since it is the hyperbolic cosine of a positive length, we conclude that each term in parentheses is positive, hence that $C_{++}>0$. Now:
\[ C_{-+} - C_{++} = -2\bv_1\circ\by_3 + 2\by_1\circ\by_3 = 2L_{13}\left(\frac{L_{12}}{\sqrt{L_{12}^2 - 1}} - 1 \right) + \frac{2L_{23}}{\sqrt{L_{12}^2-1}} > 0. \]
A similar argument shows that $C_{+-} > C_{++}$. And since each of the four summands of $C_{--}$ has positive sign, it is greater than the other coefficients, proving the claim.

Given the claim, Lemma \ref{or test sharc} implies assertion (1) of this result. The fact that the coefficients $C_{\pm\pm}$ depend only on the $L_{ij}$ implies the same for the coordinates of the critical point $(s_0,t_0)$ of $D$, and hence for the absolute minimum value $D(s_0,t_0)$. Lemma \ref{or test sharc} also gives that $(s_0,t_0)\in(0,\infty)^2$; because $\tilde\lambda_{ij}$ was parametrized pointing into $H_i$ with $\tilde\lambda_{ij}(0)\in P_i$, the fact that $s_0>0$ implies that the closest point $\tilde\lambda_{ij}(s_0)$ to $\tilde\lambda_{kl}$ lies in $H_i$. Likewise, $\tilde\lambda_{kl}(t_0)\in H_k$. But by swapping the roles of $i$ and $j$, and of $k$ and $l$---which only re-parametrizes $\tilde\lambda_{12}$ and/or $\tilde\lambda_{34}$---and re-running the argument above, we find that the closest point of $\tilde\lambda_{ij}$ to $\tilde\lambda_{kl}$ also lies in $H_j$. Therefore it lies in the edge $\lambda_{ij}$ of $\Delta$. Likewise, the closest point of $\tilde\lambda_{kl}$ to $\tilde\lambda_{ij}$ lies in the edge $\lambda_{kl}$. Therefore the geodesic arc joining these two points is the relevant transversal of $\Delta$, as asserted in (3) above.

The transversal's length $T(x,y;a,b,c,d)$ is therefore the arccosh of the minimum value of $D(s,t)$, so the fact that it depends only on the internal edge lengths follows directly from descriptions of $D$ and of $(s_0,t_0)$ in terms of the $C_{\pm\pm}$---these depend only on the internal edge lengths ie.~the pairwise distances between planes. The symmetry property of $T$ follows again from the fact that swapping the indicex of $P_i$ with $P_j$, and/or of $P_k$ with $P_l$, does not change its value. Each such swap acts on the collection $\{a,b,c,d\}$ as an even involution.

It remains to show that $T(x,y;a,b,c,d)$ is increasing in each of its last four variables. For this (again taking $i = 1$, $j = 2$, $k = 3$, and $l = 4$ after renumbering the $P_i$ if necessary), we claim that for any fixed $(s,t)$ the value $D(s,t)$---with itself depends on the values $x = L_{12}$, $y = L_{34}$, $a = L_{13}$, $b = L_{14}$, $c = L_{23}$, and $d = L_{24}$---increases with $d=L_{24}$ (taking the other $L_{ij}$ as fixed). To see this, we plug the values from (\ref{blib}) into the formula of (\ref{suzie schmuckaroozie}). Dividing out by $\cosh s\cosh t$ yields:\begin{align*}
  \frac{D(s,t)}{\cosh s\cosh t} & = \frac{L_{12}L_{13}L_{34} + L_{12}L_{14} + L_{23}L_{34} + L_{24}}{\sqrt{(L_{12}^2-1)~(L_{34}^2-1)}}  \\
  &\qquad - \tanh t \frac{L_{12}L_{13}+L_{23}}{\sqrt{L_{12}^2-1}} - \tanh s \frac{L_{13}L_{34} + L_{14}}{\sqrt{L_{34}^2-1}} + \tanh s \tanh t\ L_{13}.  \end{align*}
The value $d = L_{24}$ appears only once in this formula, in the numerator of the first summand above, with a positive sign. The claim is thus clear. It follows that the absolute minimum of $D(s,t)$, and hence also the transversal length $T(x,y;a,b,c,d)$, also increases with $d$. Since values of $T(x,y;a,b,c,d)$ are invariant under a transitive group action on $\{a, b,c,d\}$, the same then holds for $a$, $b$ and $c$.\end{proof}

\begin{proposition}\label{tetra transversals} Suppose $P_1$, $P_2$, $P_3$, $P_4$ are pairwise disjoint and non-parallel planes in $\mathbb{H}^3$ such that for each $i$, a single half-space $H_i$ bounded by $P_i$ contains $P_j$ for all $j\ne i$, and let $\Delta$ be the truncated tetrahedron determined by the $P_i$ as in Definition \ref{trunch teth}. For any fixed $i<j\in\{1,2,3,4\}$ and $k<l\in\{1,2,3,4\}-\{i,j\}$, let $T(x,y;a,b,c,d)$ record the length of the transversal of $\Delta$ joining $\lambda_{ij}$ to $\lambda_{kl}$ as in Lemma \ref{shrinking transversals}, where $x = \cosh\ell(\lambda_{ij})$, $y = \cosh \ell(\lambda_{kl})$, and $a$, $b$, $c$, $d$ are hyperbolic cosines of the other four internal edge lengths of $\Delta$. For some fixed $L>1$, if each of $a$, $b$, $c$, and $d$ is at least $L$, then
\[ \cosh T(x,y;a,b,c,d) \ge \frac{2L}{\sqrt{(x-1)(y-1)}}, \]
with equality if and only if $a = b = c = d = L$.\end{proposition}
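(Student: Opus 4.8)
The plan is to show that, with $x$ and $y$ held fixed, $\cosh T$ is minimized over the region $\{a,b,c,d\ge L\}$ exactly at $a=b=c=d=L$, and then to evaluate $\cosh T$ there in closed form. Take $(i,j,k,l)=(1,2,3,4)$ without loss of generality, so that $x=L_{12}$, $y=L_{34}$, and $(a,b,c,d)=(L_{13},L_{14},L_{23},L_{24})$ in the notation of the proof of Lemma~\ref{shrinking transversals}. By Lemmas~\ref{shortest arc} and \ref{shrinking transversals}, $\cosh T=\min_{(s,t)\in\mathbb{R}^2}D(s,t)$, where $D$ is the function whose closed form is displayed in the proof of Lemma~\ref{shrinking transversals}. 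I will regard $D$ as a function of the six parameters and compare its values at $(x,y,a,b,c,d)$ with those at $(x,y,L,L,L,L)$.

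The first step is to note that $D(s,t)/(\cosh s\cosh t)$ is an affine function of each of $a$, $b$, $c$, $d$ individually (with $x,y,s,t$ fixed) whose coefficients are everywhere positive. Indeed, the coefficient of $a$ is $\bigl(x/\sqrt{x^2-1}-\tanh s\bigr)\bigl(y/\sqrt{y^2-1}-\tanh t\bigr)$, which is positive since $x/\sqrt{x^2-1}>1\ge|\tanh s|$ and likewise with $y$ and $t$; and the coefficients of $b$, $c$, $d$ are, respectively, a positive multiple of $x/\sqrt{x^2-1}-\tanh s$, a positive multiple of $y/\sqrt{y^2-1}-\tanh t$, and a positive constant. (This is the pointwise assertion underlying the monotonicity recorded in Lemma~\ref{shrinking transversals}.) Consequently, if $a,b,c,d\ge L$ then $D(s,t)\ge D_L(s,t)$ for every $(s,t)$, where $D_L$ denotes $D$ with all four of these parameters replaced by $L$, and equality at any single $(s,t)$ already forces $a=b=c=d=L$. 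Taking minima over $(s,t)$ gives $\cosh T=\min D\ge\min D_L$; and if $\cosh T=\min D_L$ then, choosing a minimizer $(s_0,t_0)$ of $D$ (which exists by Lemma~\ref{shortest arc}), the chain $\min D_L\le D_L(s_0,t_0)\le D(s_0,t_0)=\min D=\min D_L$ forces $D_L(s_0,t_0)=D(s_0,t_0)$, whence $a=b=c=d=L$. The converse implication is immediate.

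It remains to compute $\min D_L$. Setting $a=b=c=d=L$ in the formula for $D$, the numerator of its first summand collapses to $L(L_{12}L_{34}+L_{12}+L_{34}+1)=L(x+1)(y+1)$, and, using $\sqrt{(x^2-1)(y^2-1)}=\sqrt{x^2-1}\,\sqrt{y^2-1}$ together with the analogous simplifications of the remaining summands, the whole expression factors as
\[ \frac{D_L(s,t)}{\cosh s\cosh t}=L\,(p-\tanh s)(q-\tanh t),\qquad p=\sqrt{\frac{x+1}{x-1}},\quad q=\sqrt{\frac{y+1}{y-1}}, \]
equivalently $D_L(s,t)=L\,(p\cosh s-\sinh s)(q\cosh t-\sinh t)$. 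Since $p,q>1$, each factor is a strictly positive function of one variable, and $p\cosh s-\sinh s$ attains its global minimum $\sqrt{p^2-1}=\sqrt{2/(x-1)}$ at $\tanh s=1/p$, with the analogous statement for the $t$-factor. Hence
\[ \min D_L=L\sqrt{\frac{2}{x-1}}\,\sqrt{\frac{2}{y-1}}=\frac{2L}{\sqrt{(x-1)(y-1)}}, \]
which, combined with the previous paragraph, is exactly the assertion of the Proposition.

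The only genuinely nontrivial point is the factorization in the last paragraph: the algebraic collapse of $L_{12}L_{13}L_{34}+L_{12}L_{14}+L_{23}L_{34}+L_{24}$ to $L(x+1)(y+1)$ when $L_{13}=L_{14}=L_{23}=L_{24}=L$, which renders $\min D_L$ computable in closed form. Everything else is routine bookkeeping. One small subtlety worth care is that the tuple $(x,y,L,L,L,L)$ need not be realized by an actual truncated tetrahedron---indeed $\min D_L$ could be less than $1$---so the comparison above is carried out in terms of the explicit function $D$ and its minimum, rather than by invoking the monotonicity of $T$ as a geometric length directly.
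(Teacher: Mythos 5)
Your proof is correct, and it follows the same overall strategy as the paper---reduce to the symmetric case $a=b=c=d=L$ by monotonicity of $D$ in those four variables, then compute the minimum there in closed form---but the execution of both steps differs in ways worth noting. For the reduction, the paper simply invokes the monotonicity of $T$ recorded in Lemma \ref{shrinking transversals}; you instead verify directly that all four coefficients of $a,b,c,d$ in $D(s,t)/(\cosh s\cosh t)$ are positive (the paper's lemma only checks the coefficient of $d$ and appeals to symmetry), which gives you the pointwise inequality $D\ge D_L$ and a clean argument for the equality case. This also lets you sidestep, and correctly flag, a subtlety the paper glosses over: $(x,y,L,L,L,L)$ need not be realized by an actual truncated tetrahedron, so the comparison must be made at the level of the explicit function $D$ rather than of geometric transversal lengths, and your argument does exactly that. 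For the computation, the paper asserts by symmetry that the minimizer of $D_L$ sits at the midpoints $s_0=\ell(\lambda_{12})/2$, $t_0=\ell(\lambda_{34})/2$ (to be checked against the partial-derivative formulas) and evaluates $D_L$ there; your factorization $D_L(s,t)=L\,(p\cosh s-\sinh s)(q\cosh t-\sinh t)$ with $p=\sqrt{(x+1)/(x-1)}$, $q=\sqrt{(y+1)/(y-1)}$ reduces the two-variable minimization to two one-variable ones, each with minimum $\sqrt{p^2-1}=\sqrt{2/(x-1)}$ (resp.\ $\sqrt{2/(y-1)}$), recovering the same critical point ($\tanh s=1/p$ gives $\cosh s=\sqrt{(x+1)/2}$) without having to guess it. The factorization is arguably the cleaner route and makes the final formula transparent.
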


The Proposition's proof rests on the relative convenience of explicitly writing down the critical point of the function $D(s,t)$ from Lemma \ref{or test sharc}, and hence explictly computing values of $T$, in this highly symmetric case.

\begin{proof} We re-record the formulas of (\ref{blib}) in the special case that $L_{13} = L_{14} = L_{23} = L_{24} = L$, and $L_{12} = x$, $L_{34} = y$:\begin{align*}
	& \bv_1\circ\bv_3 = - L\sqrt{\frac{(x+1)(y+1)}{(x-1)(y-1)}}, & 
	& \bv_1\circ\by_3 = - L\sqrt{\frac{x+1}{x-1}}, & 
	& \by_1\circ\bv_3 = - L\sqrt{\frac{y+1}{y-1}}, & & \by_1\circ\by_3 = -L \end{align*}
Plugging these into the formula (\ref{crit loc}) from Lemma \ref{or test sharc}, we find that $\cosh s_0 = \sqrt{\frac{1}{2}(x+1)}$ and $\cosh t_0 = \sqrt{\frac{1}{2}(y+1)}$. (This means that $s_0 = \ell(\lambda_{12})/2$ and $t_0 = \ell(\lambda_{34})/2$, which one would expect from considerations of symmetry.) Plugging this point into the formula for $D$ from the Lemma yields:
\begin{align*} D(s_0,t_0) & = -\cosh s_0\cosh t_0\, (\bv_1\circ\bv_3) + \cosh s_0\sinh t_0\, (\bv_1\circ\by_3) \\
	& \qquad + \sinh s_0 \cosh t_0\, (\by_1\circ\bv_3) - \sinh s_0\sinh t_0\, (\by_1\circ\by_3) \\
	& = \frac{L}{2} \left[ \frac{(x+1)(y+1)}{\sqrt{(x-1)(y-1)}} - \frac{(x+1)\sqrt{y-1}}{\sqrt{x-1}} - \frac{\sqrt{x-1}(y+1)}{\sqrt{y-1}} + \sqrt{(x-1)(y-1)} \right] \end{align*}
When simplified, this yields the formula of the Proposition statement. It now follows from Lemma \ref{shrinking transversals} that this bounds the value of $\cosh T(x,y;a,b,c,d)$ below when $a$, $b$, $c$, and $d$ are all at least $L$, and that equality holds if and only if $a = b = c = d = L$.
\end{proof}

%%%%%%%%%%%%%%%
\section{Dimension three: \emph{partially} truncated tetrahedra}\label{PT}

We now consider cases in which, for a fixed $k\in\{1,2,3\}$, the hyperplane $P_i$ of the previous section is replaced by a horoball $B_i$ for each $i>k$. We again assume that the planes $P_1,\hdots,P_k$ are pairwise disjoint and non-parallel; require the ideal points of $B_{k+1},\hdots,B_4$ to be pairwise distinct and not contained in the ideal boundary of any $P_i$; and for each $i\le k$, assume that a single half-space $H_i$ bounded by $P_i$ contains all $P_j$, $j\ne i$, and (in its ideal boundary) the ideal point of each $B_{j'}$. Our first result is the analog of Lemma \ref{one side} in this setting.

\begin{lemma}\label{one side prime} For a fixed $k\in\{1,2,3\}$, suppose $P_1,\hdots,P_k$ are pairwise disjoint and non-parallel planes, and $B_{k+1},\hdots, B_4$ are horoballs defined by positive light-like vectors $\bx_{k+1},\hdots,\bx_4$ such that the ideal points $[\bx_j]$ are pairwise distinct and each not contained in the ideal boundary of any $P_i$. Further, suppose for each $i\le k$ that there is a single half-space $H_i$ bounded by $P_i$ which contains $P_j$ for all $j\ne i\le k$, and whose ideal boundary contains each ideal point $[\bx_{j'}]$. 

For any fixed $i$, there is a unique plane $\widehat{P}_i$ that intersects $P_j$ at right angles for each $j\neq i\le k$, and whose ideal boundary contains each ideal point $[\bx_{j'}]$ for $j'\ne i > k$ (whence $\widehat{P}_i$ also intersects $B_{j'}$ at a right angle).\begin{enumerate}
	\item For $i\le k$, if $\widehat{P}_i$ does not meet $P_i$ orthogonally then there is a single half-space $\widehat{H}_i$ bounded by $\widehat{P}_i$ such that  $\widehat{H}_i$ contains the shortest geodesic arc from $P_j$ to $P_i$ for each $j\ne i\le k$, and $\widehat{H}_i$ contains the entire geodesic ray from $P_i$ in the direction of $\bx_{j'}$ for each $j'>k$.
	\item For $i>k$, if the ideal boundary of $\widehat{P}_i$ does not contain $[\bx_i]$ then there is a single half-space $\widehat{H}_i$ bounded by $\widehat{P}_i$ such that for each $j\le k$, $\widehat{H}_i$ contains the entire geodesic perpendicular to $P_j$ in the direction of $\bx_i$, and for each $j'\ne i >k$, the entire geodesic joining $\bx_i$ to $\bx_{j'}$.
\end{enumerate}\end{lemma}

\begin{proof} Let $k\in\{1,2,3\}$ be as in the Lemma's statement. For each $i\in\{1,\hdots,k\}$, as in the proof of Lemma \ref{one side} let $\by_i$ be an outward unit normal, in the sense described below Lemma \ref{space-like corresp}, to the half-space bounded by $P_i$ that contains each $P_{j}$ or $B_j$ for $j\ne i$. As in the proof of Lemma \ref{one side}, for any distinct $j, j'\in\{1,\hdots,k\}$, $\by_j\circ\by_{j'} < 0$. For $j\le k$ and $j'>k$, $\by_j\circ\bx_{j'} <0$ by Lemma \ref{to the hyperplane!} and the choice of $\by_j$. For distinct $j,j'>k$, we note that $\bx_j$ and $\bx_{j'}$ are linearly independent since their projective classes are distinct, so $\bx_j\circ\bx_{j'}<0$ by Fact \ref{CS}. For any fixed $i\in\{1,2,3,4\}$, let $\widehat{P}_i$ be the intersection with $\mathbb{H}^3$ of the span of the set of $\by_j$ or $\bx_j$ as above, taken over the three $j\ne i\in \{1,2,3,4\}$. 

As in the proof of Lemma \ref{one side}, $\widehat{P}_i$ contains the mutual perpendicular geodesic to $P_j$ and $P_{j'}$, for any distinct $j$ and $j'\le k$ and not $i$, so it meets each such $P_j$ at right angles. It contains the ideal points $[\bx_{j}]$, $j\ne i > k$, by construction. Moreover, for $j,j'\ne i$, if $j\le k$ and $j'>k$ then $\widehat{P}_i$ contains the geodesic perpendicular to $P_j$ in the direction of $\bx_{j'}$ described in Lemma \ref{to the hyperplane!}; and if $j,j'>k$ then $\widehat{P}_i$ contains the geodesic between $\bx_j$ and $\bx_{j'}$ described in Lemma \ref{to the other horoball!}. As Remarked below Lemma \ref{to the horoball!}, this implies that $\widehat{P}_i$ intersects each $B_j$ perpendicularly.

We now suppose that $i\le k$ and that $\widehat{P}_i$ does not intersect $P_i$ at a right angle. For any $j\ne i\le k$, arguing as in Lemma \ref{one side} we find that the point $\bv_j\in P_j$ at the foot of the shortest geodesic joining $P_j$ to $P_i$ does not lie in $\widehat{P}_i$, and that for the outward unit normal $\bz_i$ to the half-space $\widehat{H}_i$ bounded by $\widehat{P}_i$ that contains $\bv_j$, $\by_i\circ\bz_i<0$. For any $j'>k$, the geodesic perpendicular to $P_i$ in the direction of $\bx_{j'}$ is described by Lemma \ref{to the hyperplane!} as $\gamma(t) = e^{-h}\cosh t\,\bx_{j'} + e^{-t}\by_i$, where $e^h = -\bx_{j'}\circ\by_i$. Since $\bx_{j'}\circ\bz_i = 0$ by the choice of $\bz_i$, $\gamma(t)\circ\bz_i = e^{-t}\by_i\circ\bz_i < 0$ for any $t$. Therefore $\gamma(t)$ lies in $\widehat{H}_i$. This establishes the Lemma's conclusion (1).

Now take $i>k$ and suppose that the ideal boundary of $\widehat{P}_i$ does not contain $[\bx_i]$. For any $j\leq k$, the geodesic perpendicular to $P_j$ and in the direction of $\bx_i$ is described as $\gamma_j(t) =  e^{-h}\cosh t\,\bx_{i} + e^{-t}\by_j$ by Lemma \ref{to the hyperplane!}, where $e^h = -\bx_i\circ\by_j$. For a unit normal $\bz_i$ to $\widehat{P}_i$ and any $t\in\mathbb{R}$, $\gamma_j(t)\circ\bz_i = e^{-h}\cosh t\, \bx_i\circ\bz_i$ has the same sign as $\bx_i\circ\bz_i$. Choosing $\bz_i$ to be the outward unit normal to the half-space $\widehat{H}_i$ containing $\bx_i$ in its ideal boundary, it follows that $\gamma_j(t)$ is contained in $\widehat{H}_i$ for all $t\in\mathbb{R}$. For any $j'\ne i>k$, the geodesic joining $\bx_i$ to $\bx_{j'}$ has the form $\gamma_{j'}(t) = \frac{1}{2}e^{-d/2}\left(e^t\,\bx_i + e^{-t}\,\bx_{j'}\right)$ by Lemma \ref{to the other horoball!}, where $e^d = -\frac{1}{2}\bx_i\circ\bx_{j'}$. Thus for $\bz_i$ as above, since $\bx_{j'}$ is in the ideal boundary of $\widehat{P}_i$, $\gamma_{j'}(t)\circ\bz_i = \frac{1}{2}e^{-d/2}e^t\,\bx_i\circ\bz_i < 0$. Therefore again $\gamma_{j'}(t)\in\widehat{H}_i$ for all $t$. This establishes the Lemma's conclusion (2).
\end{proof}

Again if there is any $i$ such that either $\widehat{P}_i$ intersects $P_i$ at right angles (if $i\le k$) or contains $\bx_i$ in its ideal boundary ($i>k$), then in fact $\widehat{P}_j = \widehat{P}_i$ has the same property for all $j$, and the definition below produces a degenerate tetrahedron that lies entirely in this single plane.

\begin{definition}\label{french tuth} For a fixed $k\in\{1,2,3\}$, suppose $P_1,\hdots,P_k$ are pairwise disjoint and non-parallel planes, and $B_{k+1},\hdots, B_4$ are horoballs defined by positive light-like vectors $\bx_{k+1},\hdots,\bx_4$ such that the ideal points $[\bx_j]$ are pairwise distinct and each not contained in the ideal boundary of any $P_i$. Further, suppose for each $i\le k$ that there is a single half-space $H_i$ bounded by $P_i$ which contains $P_j$ for all $j\ne i\le k$, and whose ideal boundary contains each ideal point $[\bx_{j'}]$. 

If $\widehat{P}_1$ as in Lemma \ref{one side prime} does not meet $P_1$ orthogonally, taking $\widehat{H}_i$ as the half-space supplied by Lemma \ref{one side prime} for each $i$, define the \textit{partially truncated tetrahedron} determined by the $P_i$ and $\bx_j$ as
\[ \Delta = \left(\bigcap_{i=1}^k H_i  \right)\cap \left(\bigcap_{i=1}^4 \widehat{H}_i \right). \]

If $\widehat{P}_1$ does meet $P_1$ orthogonally, then taking $\widehat{P} = \widehat{P}_i$ to be the unique plane that intersects each $P_i$ at right angles for $i\le k$, and contains $\bx_j$ for each $j>k$; and renumbering the $P_i$ so that the perpendicular geodesic to $P_1$ and $P_3$ (if $k=3$) or to $P_1$ in the direction of $\bx_3$ (if $k<3$) separates $P_2\cap\widehat{P}$ or $\bx_2$ from $\bx_4$, we define $\Delta$ as a \textit{degenerate partially truncated tetrahedron}  by:
\[ \Delta = \widehat{P}\cap \left(\bigcap_{i=1}^k H_i  \right)\cap h_{12} \cap h_{23}\cap h_{34}\cap h_{14}, \] 
where $h_{12}$ is the half-plane in $\widehat{P}$ that is bounded by the perpendicular geodesic to $P_1$ and $P_2$---or, if $k=1$, in the direction of $\bx_2$---and contains  $\bx_4$ in its ideal boundary (hence also contains $P_3\cap \widehat{P}$, or $\bx_3$ in its ideal boundary); and so on.

In all cases, for each $i>k$, we say the projective class $[\bx_i]$ of $\bx_i$ is an \textit{ideal vertex} of $\widehat{P}_i$. For each $i<j\le k$, the \textit{internal edge} $\lambda_{ij}$ of $\Delta$ is the shortest arc in $\mathbb{H}^3$ joining $P_i$ to $P_j$; for $i\le k<j$, the \textit{ray edge} $\lambda_{ij}$ is the geodesic ray perpendicular to $P_i$ in the direction of $\bx_j$; and for $ k < i< j$, the \textit{bi-infinite edge} $\lambda_{ij}$ is the geodesic joining $\bx_i$ to $\bx_j$. The edge \textit{opposite} $\lambda_{ij}$ is $\lambda_{i'j'}$, where $i'<j'\in\{1,2,3,4\}-\{i,j\}$. For each $i$, the  \textit{internal face opposite} $P_i$ or $\bx_i$ is the polygon in $\Delta\cap\widehat{P}_i$ bounded by the internal edges $\lambda_{jj'}$, for each pair $j<j'\in\{1,2,3,4\}-\{i\}$, and arcs of the $P_j$, $j\ne i\le k$. The non-internal faces and edges of $\Delta$ are \textit{external}. Each of these is entirely contained in $P_i$ for some $i\le k$. 

The \textit{transversal} of $\Delta$ joining an edge $\lambda_{ij}$ to its opposite $\lambda_{kl}$ is the shortest geodesic arc with one endpoint on each edge; or if these edges intersect, it is their point of intersection.\end{definition}

The rest of the section is divided into subsections addressing the different possible $k\in\{1,2,3\}$.

%%%%%%%%%%%%%
\subsection{One horoball} Here we take $k = 1$, the case of partially truncated tetrahedra having a single ideal vertex. We first give the analog of Lemma \ref{shrinking transversals} in this case.

\begin{lemma}\label{shortest arc one} Suppose $P_1$, $P_2$, and $P_3$ are pairwise disjoint and non-parallel planes in $\mathbb{H}^3$, and $B_4$ is a horoball defined by a vector $\bx_4$ not in the ideal boundary of any $P_i$, such that for each $i<4$, a single half-space $H_i$ bounded by $P_i$ contains $P_j$ for all $j\ne i<4$ and $\bx_4$ in its ideal boundary. Let $\Delta$ be the partially truncated tetrahedron determined by the $P_i$ and $B_4$ as in Definition \ref{one side prime}.  For any $i< j\in\{1,2,3\}$, let $\tilde\lambda_{ij}$ be the geodesic intersecting $P_i$ and $P_j$ at right angles; and for $i<4$ let $\tilde\lambda_{i4}$ be the perpendicular geodesic to $P_i$ in the direction of $\bx_4$. Fixing such an $i<j\le 3$ and $k\in\{1,2,3\}-\{i,j\}$, and fixing parametrizations $\tilde\lambda_{ij}(s)$ and $\tilde\lambda_{k4}(t)$ by arclength:\begin{enumerate} 
	\item the function $D(s,t)$ that records the hyperbolic cosine of the distance from $\tilde\lambda_{ij}(s)$ to $\tilde\lambda_{k4}(t)$ has a unique critical point $(s_0,t_0)$ in $\mathbb{R}^2$, at which it attains an absolute minimum;
	\item the absolute minimum value $D(s_0,t_0)$ depends only on the pairwise distances $\ell_{i'j'}$ between the $P_{ij}$ and signed distances $h_{k'4}$ from the $P_i$ to $B_4$; 
	\item the points $\tilde\lambda_{ij}(s_0)$ and $\tilde\lambda_{k4}(t_0)$ lie in $\Delta$, so $D(s_0,t_0)$ is the hyperbolic cosine of a transversal length of $\Delta$; and
	\item written as $T_1(x,y;a,b;c,d)$, where $x = \cosh \ell_{ij}$, $y = e^{h_{k4}}$, $a$ and $b$ are hyperbolic cosines of the other internal edge lengths, and $c$ and $d$ are of the form $e^{h_{i4}}$ for the signed distance between $B_4$ and the external faces in $P_i$, $i\ne k$, its value is invariant under the involution that swaps its inputs $a$ with $b$ and $c$ with $d$, and increasing in each of $a$, $b$, $c$, $d$ individually.
\end{enumerate}\end{lemma}

\begin{proof} For each $i\in\{1,2,3\}$ let $\by_i$ be an outward unit normal, in the sense described below Lemma \ref{space-like corresp}, to the half-space $H_i$ bounded by $P_i$ that contains each other $P_j$. For any $j\ne i$, $\by_i$ and $\by_{j}$ are then oppositely-oriented tangent vectors to the hyperbolic geodesic intersecting $P_i$ and $P_{j}$ perpendicularly, so $\by_i\circ\by_{j} < 0$ by Lemma \ref{geometry of inner product}. Furthermore, by this result the hyperbolic cosine of the distance $\ell_{ij}$ from $P_i$ to $P_j$, which we will here denote $L_{ij}$, satisfies $L_{ij} = -\by_i\circ\by_j$ for each $i<j\in\{1,2,3\}$. Also by Lemma \ref{to the hyperplane!}, $\by_i\circ\bx_4 <0$ for each $i\in\{1,2,3\}$, and the signed distance $h_{i4}$ from $P_i$ to $B_4$ satisfies $e^{h_{i4}} = -\by_i\circ\bx_4$.

Since the $P_i$ are labeled arbitrarily, we may take $(i,j,k) = (1,2,3)$ without loss of generality. We will also prove the Lemma's conclusion for particular parametrizations of $\tilde\lambda_{12}$ and $\tilde\lambda_{34}$ below. This will imply the general result, since for any other parametrization the resulting $D(s,t)$ will be obtained from this one by precomposing with a translation of $\mathbb{R}^2$ and a map of the form $(s,t) \mapsto (\pm s,\pm t)$. 
By Lemma \ref{perp point}, since $P_1\subset H_2$ the point of intersection $\tilde\lambda_{12}\cap P_1$ is: 
\[ \bv_1 = -\frac{(\by_1\circ\by_2)\by_1 - \by_2}{\sqrt{(\by_1\circ\by_2)^2-1}} = \frac{L_{12}\,\by_1+\by_2}{\sinh \ell_{12}}. \] 
Using the parametrization (\ref{arbitrary}) with $\bv_1$ as starting point and tangent vector $-\by_1$ (since $\by_1$ is outward-pointing from $H_1$), we obtain $\tilde\lambda_{12}(s) = \cosh s\,\bv_1-\sinh s\,\by_1$, an arclength parametrization for $\tilde\lambda_{12}$ having $\tilde\lambda_{12}(0) = \bv_1$ and with $\tilde\lambda_{12}(s)\in H_1$ for small $s>0$. By Lemma \ref{to the hyperplane!}, $\tilde\lambda_{34}$ is parametrized by $\tilde\lambda_{34}(t) = e^{-h_{34}}\cosh t\,\bx_4 + e^{-t}\by_3$.

$D(s,t)$ described above thus satisfies
\begin{align}\label{archibald} D(s,t) & = - (\cosh s\,\bv_1 - \sinh s\, \by_1)\circ(e^{-h_{34}}\cosh t\,\bx_4 + e^{-t}\,\by_3) \nonumber\\
	& = -e^{-h_{34}}\cosh s\cosh t\, \bv_1\circ\bx_4 - \cosh se^{-t}\, (\bv_1\circ\by_3) \\
	& \qquad\qquad\qquad + e^{-h_{34}}\sinh s \cosh t\,\by_1\circ\bx_4 + \sinh se^{-t}\, (\by_1\circ\by_3) \nonumber \\
	& = C_{++} e^se^t + C_{+-}e^se^{-t} + C_{-+} e^{-s}e^t + C_{--} e^{-s}e^{-t}, \nonumber 
%	& = \cosh s\cosh t \frac{L_{12} e^{h_{14}} + e^{h_{24}}}{\sinh \ell_{12}e^{h_{34}}} + \cosh s e^{-t}\frac{L_{12}L_{13} + L_{23}}{\sinh \ell_{12}} \\
%	& \qquad\qquad\qquad - \sinh s \cosh t \frac{e^{h_{14}}}{e^{h_{34}}} - \sinh s e^{-t}\,L_{13} \nonumber 
\end{align}
where:\begin{align}\label{dribble}
	& C_{++} = \frac{e^{-h_{34}}}{4}\left(-\bv_1\circ\bx_4+\by_1\circ\bx_4\right) && C_{+-} = C_{++} + \frac{1}{2}\left(-\bv_1\circ\by_3 + \by_1\circ\by_3\right) \\
	& C_{-+} = \frac{e^{-h_{34}}}{4}\left(-\bv_1\circ\bx_4 - \by_1\circ\bx_4\right) && C_{--} = C_{-+} + \frac{1}{2}\left(-\bv_1\circ\by_3 - \by_1\circ\by_3\right) \nonumber
\end{align}
We claim that $0 < C_{++} < C_{+-}, C_{-+} < C_{--}$. Substituting into the formulas above, and recalling that $L_{12} > 1$, being the hyperbolic cosine of a positive length, we obtain:\begin{align*}
	C_{++}  = \frac{e^{-h_{34}}}{4} & \left( e^{h_{14}}\left(\frac{L_{12}}{\sqrt{L_{12}^2-1}} - 1\right) + \frac{e^{h_{24}}}{\sqrt{L_{12}^2-1}} \right) > 0 \\
	C_{+-} - C_{++} = \frac{1}{2} & \left( L_{13}\left(\frac{L_{12}}{\sqrt{L_{12}^2-1}} - 1 \right) + \frac{L_{23}}{\sqrt{L_{12}^2-1}} \right) > 0 \end{align*}
The formulas for $C_{-+}$ and $C_{++}$ are similar to those above, but with negative signs flipped to positive, from which it follows that $C_{-+} > C_{++}$ and that $C_{--}$ is the largest of the four. 

The claim is thus proved, so Lemma \ref{or test sharc} directly implies the present result's conclusion (1). Since the $C_{\pm\pm}$ all depend only on the distances $\ell_{i'j'}$ and $h_{k'4}$, that result's description of the critical point $(s_0,t_0)$ of $D(s,t)$ in terms of these coefficients, plus the description above of $D(s,t)$ itself, imply that the minimum value $D(s_0,t_0)$ depends only on these lengths---the present result's conclusion (2). Also by Lemma \ref{or test sharc}, $s_0$ and $t_0$ are both positive, so given the parameterizations specified for $\tilde\lambda_{12}(s)$ and $\tilde\lambda_{34}(t)$, this implies that $\tilde\lambda_{12}(s_0)\in H_1$ and $\tilde\lambda_{34}(t_0)\in H_3$.

Swapping the indices $1$ and $2$ in the argument above, while fixing $3$ and $4$, has the effect of re-parametrizing $\tilde\lambda_{12}$ and shows that $\tilde\lambda_{12}(s_0)$---representing the same closest point to $\tilde\lambda_{34}$---also lies in $H_2$. The present result's conclusion (3) follows, as does the fact (part of conclusion (4)) that the function $T(x,y;a,b;c,d)$ defined there is invariant under exchanging $a$ with $b$ and $c$ with $d$. (This is the effect on the inputs of the index swap $1\leftrightarrow 2$ for the $P_i$.)

For the final part of conclusion (4), that the function $T$ is increasing in each of $L_{13}$, $L_{23}$, $e^{h_{14}}$, and $e^{h_{24}}$, we rearrange the terms of equation (\ref{archibald}) and divide by $\cosh s$, yielding:
\begin{align}\label{D one} \frac{D(s,t)}{\cosh s} = \left(\cosh t\,\frac{e^{h_{14}}}{e^{h_{34}}} + e^{-t}\,L_{13}\right)\left(\coth\ell_{12}-\tanh s\right) + \frac{1}{\sinh\ell_{12}} \left( \cosh t \frac{e^{h_{24}}}{e^{h_{34}}} + e^{-t}L_{23} \right). \end{align}
From this we see that ``$D$'', now taken to represent a \emph{family} of functions of $(s,t)$ parametrized by $(L_{12},e^{h_{34}},L_{13},L_{23},e^{h_{14}},e^{h_{24}})$, increases pointwise with each of the last four quantities for any fixed $(s,t)$; therefore its absolute minimum $T(x,y;L_{13},L_{23};e^{h_{14}},e^{h_{24}})$ does as well.
\end{proof}

\begin{proposition}\label{tetra transversals one} Suppose $P_1$, $P_2$, and $P_3$ are pairwise disjoint and non-parallel planes in $\mathbb{H}^3$, and $B_4$ is a horoball defined by a vector $\bx_4$ not in the ideal boundary of any $P_i$, such that for each $i<4$, a single half-space $H_i$ bounded by $P_i$ contains $P_j$ for all $j\ne i<4$ and has $\bx_4$ in its ideal boundary. For any $i< j\in\{1,2,3\}$, let $T_1(x,y;a,b;c,d)$ record the transversal length of $\Delta$ as in Lemma \ref{shortest arc one}, where $x = \cosh \ell_{ij}$, $y = e^{h_{k4}}$, $a$ and $b$ are hyperbolic cosines of the other internal edge lengths, and $c$ and $d$ are of the form $e^{h_{i4}}$ for the signed distance between $B_4$ and the external faces in $P_i$, $i\ne k$. If $a$ and $b$ are both at least $L$ and $c$ and $d$ are both at least $H$, for some fixed $L>1$ and $H\ge 1$ then:
\[ \cosh T_1(x,y;a,b;c,d) \ge \sqrt{\frac{2H}{y(x-1)} \left(\frac{H}{y}+2L\right)}, \]
with equality if $a = b = L$ and $c = d = H$.
\end{proposition}

\begin{proof} We compute the equality case first. Taking $(i,j,k) = (1,2,3)$ as in the proof of Lemma \ref{shortest arc one}, so that $x = L_{12}$, $y = e^{h_{34}}$, and taking $a = b = L$ and $c=d=H$, the values for $C_{++}$ and $C_{+-}$ described in (\ref{dribble}) become:\begin{align*}
	& C_{++} = \frac{H}{4y}\left(\sqrt{\frac{x+1}{x-1}} - 1 \right) && C_{+-} = \left(\frac{H}{4y}+\frac{L}{2}\right)\left(\sqrt{\frac{x+1}{x-1}} - 1 \right)
\end{align*}
Values of $C_{-+}$ and $C_{--}$ are like those of $C_{++}$ and $C_{+-}$, respectively, but with negative signs flipped to positive. We therefore obtain:
\[ C_{++}C_{--} = C_{+-}C_{-+} = \frac{H}{4y} \left(\frac{H}{4y}+\frac{L}{2}\right) \frac{2}{x-1}. \]
Therefore by (\ref{crit loc}), in this case $D(s_0,t_0) = 4\sqrt{C_{++}C_{--}}$ is given by the formula in the Proposition statement. That it bounds the value of $\cosh T_1$ below, for $a,b\ge L$ and $c,d\ge H$, follows from the fact proved in Lemma \ref{shortest arc one}, that $T_1$ is increasing in $a$ and $b$, and separately in $c$ and $d$.
\end{proof}

%%%%%%%%%%%%%%
\subsection{Two horoballs} We now consider the case $k= 2$, of partially truncated tetrahedra with exactly two ideal vertices. Here there are two qualitatively different transversals. Proposition \ref{tetra transversals two} considers the length of a ``ray-ray'' transversal joining a pair of opposite internal edges that each has one ideal endpoint. Proposition \ref{shortest arc two cb} then treats the length of the ``compact-bi-infinite'' transversal joining the unique compact and bi-infinite edges. Notably, we obtain fully general explicit formulas in both cases.

\begin{proposition}\label{tetra transversals two} Suppose $P_1$ and $P_2$ are pairwise disjoint and non-parallel planes in $\mathbb{H}^3$, and $B_3$ and $B_4$ are horoballs respectively defined by vectors $\bx_3$ and $\bx_4$, neither in the ideal boundary of either $P_i$, such that for each $i\in\{1,2\}$, a single half-space $H_i$ bounded by $P_i$ contains $P_{3-i}$ and, in its ideal boundary, each of $\bx_3$ and $\bx_4$. Let $\tilde\lambda_{12}$ be the geodesic intersecting $P_1$ and $P_2$ at right angles; for $i\in\{1,2\}$ and $j\in\{3,4\}$ let $\tilde\lambda_{ij}$ be the perpendicular geodesic to $P_i$ in the direction of $\bx_j$; and let $\tilde\lambda_{34}$ be the geodesic with ideal endpoints at $\bx_3$ and $\bx_4$. Fixing parametrizations $\tilde\lambda_{13}(s)$ and $\tilde\lambda_{24}(t)$ by arclength:\begin{enumerate} 
	\item the function $D(s,t)$ that records the hyperbolic cosine of the distance from $\tilde\lambda_{13}(s)$ to $\tilde\lambda_{24}(t)$ has a unique critical point $(s_0,t_0)$ in $\mathbb{R}^2$, at which it attains an absolute minimum;
	\item the absolute minimum value $D(s_0,t_0)$ depends only on the pairwise distance $\ell_{12}$ between $P_1$ and $P_2$, signed distances $h_{ij}$ from the $P_i$ ($i\in\{1,2\}$) to the $B_j$ ($j\in\{3,4\}$), and on the signed distance $d_{34}$ from $B_3$ to $B_4$; 
	\item the points $\tilde\lambda_{13}(s_0)$ and $\tilde\lambda_{24}(t_0)$ lie in the partially truncated tetrahedron $\Delta$ determined by the $P_i$ and $B_j$ as in Definition \ref{one side prime}, so $D(s_0,t_0)$ is the hyperbolic cosine of a transversal length of $\Delta$; and
	\item written as $T_2^{rr}(x,y;a;b,c;d)$, where $x = e^{h_{13}}$, $y = e^{h_{24}}$, $a = \cosh\ell_{12}$, $b$ and $c$ are of the form $e^{h_{ij}}$ for $(i,j) = (1,4)$ and $(2,3)$, and $d = e^{d_{34}}$, this transversal length is %symmetric in $b$ and $c$, and 
increasing in each of $a$, $b$, $c$, and $d$ individually.
\end{enumerate}
For $x$, $y$, $a$, $b$, $c$, and $a$ as above, $T_2^{rr}(x,y;a;b,c;d)$ is explicitly given by:\begin{align*}
	\cosh T_2^{rr}(x,y;a;b,c;d) = \sqrt{\frac{1}{xy}\left(\frac{d}{y} + c\right)\left(\frac{d}{x}+b\right)} + 
		\sqrt{\frac{d}{xy}\left(\frac{d}{xy}+\frac{c}{x} + \frac{b}{y} +  2a\right)}
	\end{align*}
\end{proposition}

\begin{proof} For $i = 1,2$, let $\by_i$ be the outward normal to the half-space $H_i$ bounded by $P_i$ that contains $P_{3-i}$. Then $\by_1$ and $\by_2$ are oppositely-oriented tangent vectors to the geodesic intersecting $P_1$ and $P_2$ perpendicularly, so by Lemma \ref{geometry of inner product} $\by_1\circ\by_2 <0$ and $-\by_1\circ\by_2 = L_{12}\doteq\cosh\ell_{12}$, where $\ell_{12}$ is the shortest distance between $P_1$ and $P_2$. By Lemma \ref{to the hyperplane!}, the geodesic $\tilde\lambda_{13}$ perpendicular to $P_1$ and in the direction of $\bx_3$ has the parametrization $\tilde\lambda_{13}(s) = e^{-h_{13}} \cosh s \bx_3 + e^{-s}\by_1$, where the signed distance $h_{13}$ from $P_1$ to $B_3$ satisfies $e^{h_{13}} = -\by_1\circ\bx_3$ (noting that $\by_1\circ\bx_3<0$, since $\bx_3$ is in the ideal boundary of $H_1$ by hypothesis). The hypothesis and Lemma \ref{to the hyperplane!} likewise imply that for any $i\in\{1,2\}$ and $j\in\{3,4\}$, the signed distance $h_{ij}$ from $P_i$ to $B_j$ satisfies $e^{h_{ij}} = -\by_i\circ\bx_j$, and that the geodesic $\tilde\lambda_{24}$ perpendicular to $P_2$ and in the direction of $\bx_4$ has the parametrization $\tilde\lambda_{24}(t) = e^{-h_{24}}\cosh t \bx_4 + e^{-t}\by_2$. Lemma \ref{to the other horoball!} implies that the signed distance $d_{34}$ from $B_3$ to $B_4$ satisfies $e^{d_{34}} = - \frac{1}{2}\bx_3\circ\bx_4$, so $-\bx_3\circ\bx_4 = 2e^{d_{34}}$.

The hyperbolic cosine $D(s,t)$ of the distance between $\tilde\lambda_{13}(s)$ and $\tilde\lambda_{24}(t)$ therefore satisfies:\begin{align*}
	D(s,t) & = - \left( e^{-h_{13}} \cosh s \bx_3 + e^{-s}\by_1 \right) \circ \left( e^{-h_{24}}\cosh t \bx_4 + e^{-t}\by_2 \right) \\
	& = \cosh s \cosh t \frac{2e^{d_{34}}}{e^{h_{13}+h_{24}}} + \cosh se^{-t} \frac{e^{h_{23}}}{e^{h_{13}}} 
		+  e^{-s}\cosh t \frac{e^{h_{14}}}{e^{h_{24}}} + e^{-s}e^{-t} L_{12} \\
	& = C_{++} e^s e^t + C_{+-} e^s e^{-t} + C_{-+} e^se^{-t} + C_{--} e^{-s}e^{-t},\end{align*}
where:\begin{align*}
	& C_{++} = \frac{1}{2}\frac{e^{d_{34}}}{e^{h_{13}+h_{24}}}, && C_{+-} = C_{++} + \frac{1}{2}\frac{e^{h_{23}}}{e^{h_{13}}}, &
	& C_{-+} = C_{++} + \frac{1}{2}\frac{e^{h_{14}}}{e^{h_{24}}}, % && C_{++} + \frac{1}{2}\frac{e^{h_{23}}}{e^{h_{13}}} + \frac{1}{2}\frac{e^{h_{14}}}{e^{h_{24}}} + L_{12}.
\end{align*}
and $C_{--} = C_{+-} + C_{-+} - C_{++} + L_{12}$. It is clear that $0 < C_{++} < C_{+-}, C_{-+} < C_{--}$, so by Lemma \ref{or test sharc}, $D(s,t)$ has a unique critical point $(s_0,t_0)$ at which it attains an absolute minimum; and moreover, that each coordinate of this critical point is positive. This establishes the Proposition's assertion (1), and (3) follows from the fact that each of $\tilde\lambda_{12}(s)$ and $\tilde\lambda_{34}(t)$ are parametrized pointing into $\Delta$ at $0$.

Assertion (2), regarding the dependencies of the minimum value $D(s_0,t_0)$, follows from its explicit description in terms of the $C_{\pm\pm}$ in formula (\ref{crit loc}) and their descriptions above---for instance, $C_{++} = D/(4xy)$. The explicit formula $T_2^{rr}(x,y;a;b,c;d)$ for this minimum value follows by direct substitution into (\ref{crit loc}), and assertion (4) regarding its monotonicity in certain variables can be seen by inspection of this formula.\end{proof}

\begin{proposition}\label{shortest arc two cb} Suppose $P_1$ and $P_2$ are pairwise disjoint and non-parallel planes in $\mathbb{H}^3$, and $B_3$ and $B_4$ are horoballs respectively defined by vectors $\bx_3$ and $\bx_4$, neither in the ideal boundary of either $P_i$, such that for each $i\in\{1,2\}$, a single half-space $H_i$ bounded by $P_i$ contains $P_{3-i}$ and, in its ideal boundary, each of $\bx_3$ and $\bx_4$. Let $\tilde\lambda_{12}$ be the geodesic intersecting $P_1$ and $P_2$ at right angles; for $i\in\{1,2\}$ and $j\in\{3,4\}$ let $\tilde\lambda_{ij}$ be the perpendicular geodesic to $P_i$ in the direction of $\bx_j$; and let $\tilde\lambda_{34}$ be the geodesic with ideal endpoints at $\bx_3$ and $\bx_4$. Fixing parametrizations $\tilde\lambda_{12}(s)$ and $\tilde\lambda_{34}(t)$ by arclength:\begin{enumerate} 
	\item the function $D(s,t)$ that records the hyperbolic cosine of the distance from $\tilde\lambda_{12}(s)$ to $\tilde\lambda_{34}(t)$ has a unique critical point $(s_0,t_0)$ in $\mathbb{R}^2$, at which it attains an absolute minimum;
	\item the absolute minimum value $D(s_0,t_0)$ depends only on the pairwise distance $\ell_{12}$ between $P_1$ and $P_2$, signed distances $h_{ij}$ from the $P_i$ ($i\in\{1,2\}$) to the $B_j$ ($j\in\{3,4\}$), and on the signed distance $d_{34}$ from $B_3$ to $B_4$; 
	\item the points $\tilde\lambda_{12}(s_0)$ and $\tilde\lambda_{34}(t_0)$ lie in the partially truncated tetrahedron $\Delta$ determined by the $P_i$ and $B_j$ as in Definition \ref{one side prime}, so $D(s_0,t_0)$ is the hyperbolic cosine of a transversal length of $\Delta$; and
	\item written as $T_2^{cb}(x,y;a,b;c,d)$, where $x = \cosh \ell_{12}$, $y = e^{d_{34}}$, $a = e^{h_{13}}$, $b = e^{h_{14}}$, $c = e^{h_{23}}$, and $d = e^{h_{24}}$, this transversal length is increasing in each of $a$, $b$, $c$, $d$ individually and invariant under any product of two disjoint transpositions of $\{a,b,c,d\}$.
\end{enumerate}
Given explicitly, $\cosh T_2^{cb} (x,y;a,b;c,d)$ equals
\begin{align*}
	\frac{1}{2\sqrt{y}} \left[ \sqrt{\left(\frac{xa+c}{\sqrt{x^2-1}} - a\right)\left(\frac{xb + d}{\sqrt{x^2-1}} + b \right) } + \sqrt{\left(\frac{xa+c}{\sqrt{x^2-1}} + a\right)\left(\frac{xb + d}{\sqrt{x^2-1}} - b \right) }\right]
\end{align*}
\end{proposition}

\begin{proof} For $i = 1$ and $2$, let $\by_i$ be an outward unit normal, in the sense described below Lemma \ref{space-like corresp}, to the half-space $H_i$ bounded by $P_i$ that contains $P_{3-i}$. Thus $\by_1$ and $\by_2$ are oppositely-oriented tangent vectors to the hyperbolic geodesic intersecting $P_1$ and $P_2$ perpendicularly, so $\by_1\circ\by_2 < 0$ by Lemma \ref{geometry of inner product}. Furthermore, by this result the hyperbolic cosine of the distance $\ell_{12}$ from $P_1$ to $P_2$, which we will here denote $L_{12}$, satisfies $L_{12} = -\by_1\circ\by_2$. By Lemma \ref{perp point}, since $P_1\subset H_2$ the point of intersection $\tilde\lambda_{12}\cap P_1$ is: 
\[ \bv_1 = -\frac{(\by_1\circ\by_2)\by_1 - \by_2}{\sqrt{(\by_1\circ\by_2)^2-1}} = \frac{L_{12}\,\by_1+\by_2}{\sinh \ell_{12}}. \] 
Using the parametrization (\ref{arbitrary}) with $\bv_1$ as starting point and tangent vector $-\by_1$ (since $\by_1$ is outward-pointing from $H_1$), we obtain $\tilde\lambda_{12}(s) = \cosh s\,\bv_1-\sinh s\,\by_1$, an arclength parametrization for $\tilde\lambda_{12}$ having $\tilde\lambda_{12}(0) = \bv_1$ and with $\tilde\lambda_{12}(s)\in H_1$ for small $s>0$.

By Lemma \ref{to the hyperplane!}, $\by_i\circ\bx_j <0$ for each $i\in\{1,2\}$ and $j\in\{3,4\}$, and the signed distance $h_{ij}$ from $P_i$ to $B_j$ satisfies $e^{h_{ij}} = -\by_i\circ\bx_j$. By Lemma \ref{to the other horoball!}, the signed distance $d_{34}$ from $B_3$ to $B_4$ satisfies $2e^{d_{34}} = - \bx_3\circ\bx_4$, and the geodesic $\tilde\lambda_{34}$ perpendicular to each horoball and pointing toward $\bx_3$ has parametrization $\tilde\lambda_{34}(t) = \frac{1}{2}e^{-d_{34}/2}\left(e^t\bx_3 + e^{-t}\bx_4\right)$.

The hyperbolic cosine of the distance $D(s,t)$ between $\tilde\lambda_{12}(s)$ and $\tilde\lambda_{34}(t)$ satisfies:\begin{align*}
	D(s,t) & =  - \left( \cosh s\,\bv_1-\sinh s\,\by_1 \right) \circ \frac{e^{-d_{34}/2}}{2}\left(e^t\bx_3 + e^{-t}\bx_4\right) \\
%	& = \frac{e^{-d_{34}/2}}{2}\left[ \cosh s e^t (-\bv_1\circ\bx_3) \right. \\
%	& \hspace{1in} \left. + \cosh s e^{-t} (-\bv_1\circ\bx_4) + \sinh s e^t \by_1\circ\bx_3 + \sinh s e^{-t} \by_1\circ \bx_4 \right] \\
%	& = \frac{e^{-d_{34}/2}}{2}\left[ \cosh s e^t \left(\frac{L_{12}\,e^{h_{13}}+e^{h_{23}}}{\sinh \ell_{12}}\right)  \right. \\
%	& \hspace{1in} \left. + \cosh s e^{-t} \left( \frac{L_{12}\,e^{h_{14}}+e^{h_{24}}}{\sinh \ell_{12}}\right) - \sinh s e^{t} e^{h_{13}} - \sinh s e^{-t} e^{h_{14}} \right] \\
	& = \frac{e^{-d_{34}/2}}{4}\left[ e^s e^t \left(\frac{L_{12}\,e^{h_{13}}+e^{h_{23}}}{\sinh \ell_{12}} - e^{h_{13}} \right) + e^s e^{-t} \left( \frac{L_{12}\,e^{h_{14}}+e^{h_{24}}}{\sinh \ell_{12}} - e^{h_{14}} \right) \right. \\
	& \hspace{1in} \left. + e^{-s} e^t \left(\frac{L_{12}\,e^{h_{13}}+e^{h_{23}}}{\sinh \ell_{12}} + e^{h_{13}} \right)
	 + e^{-s} e^{-t} \left( \frac{L_{12}\,e^{h_{14}}+e^{h_{24}}}{\sinh \ell_{12}} + e^{h_{14}} \right) \right]
	\end{align*}
Let $C_{++}$ be the coefficient of $e^se^t$ in the expression for $D(s,t)$ above, and name the coefficients of the other addends as $C_{+-}$, $C_{-+}$, $C_{--}$ correspondingly. We have:
\[ C_{++} =  \frac{e^{-d_{34}/2}}{4}\left[ e^{h_{13}}\left(\frac{L_{12}}{\sinh \ell_{12}} - 1 \right) +  \frac{e^{h_{23}}}{\sinh \ell_{12}} \right] > 0. \]
We correspondingly observe that $C_{+-}>0$, that $C_{-+}>C_{++}$, and that $C_{--}>C_{+-}$. Therefore by Lemma \ref{or test sharc}, $D(s,t)$ is convex and attains an absolute minimum at a unique critical point $(s_0,t_0)$. Plugging the computed coefficients $C_{\pm\pm}$ into the formula (\ref{crit loc}) for $D(s_0,t_0)$ yields:\begin{align*}
	D(s_0,t_0) & = \frac{1}{2e^{d_{34}/2}}\left[ \sqrt{\left(\frac{L_{12}\,e^{h_{13}}+e^{h_{23}}}{\sinh \ell_{12}} - e^{h_{13}} \right) \left( \frac{L_{12}\,e^{h_{14}}+e^{h_{24}}}{\sinh \ell_{12}} + e^{h_{14}} \right) } \right. \\
	& \hspace{1in} \left. + \sqrt{  \left(\frac{L_{12}\,e^{h_{13}}+e^{h_{23}}}{\sinh \ell_{12}} + e^{h_{13}} \right) \left( \frac{L_{12}\,e^{h_{14}}+e^{h_{24}}}{\sinh \ell_{12}} - e^{h_{14}} \right) } \right]
	\end{align*}
This yields the explicit formula for $T_2^{cb}(x,y;a,b,c,d)$ given in the Proposition statement. The symmetry property of $T_2^{cb}$ described in (4) there reflects that swapping the labels of $P_1$ and $P_2$, and/or those of $B_3$ and $B_4$, does not change the minimum value. The effect on the variables $a$, $b$, $c$, $d$  of doing either or both of these label swaps is to act as a product of disjoint transpositions. That $T_2(x,y;a,b,c,d)$ is increasing in each of its final four variables follows from the fact that $D$, considered as a family of functions parametrized by $(x,y;a,b,c,d)$, is pointwise increasing.

For the Proposition's assertion (3), we note that since $C_{-+}>C_{++}$, and that $C_{--}>C_{+-}$, from the formula (\ref{crit loc}) we obtain $e^{s_0} > 1 \Rightarrow s_0 > 0$. It follows that $s_0>0$, and hence that $\tilde\lambda_{12}(s_0)$ lies in the same half-space bounded by $P_1$ as the tetrahedron $\Delta$. Since the assignment of indices to $P_1$ and $P_2$ was arbitrary, it follows that $\lambda_{12}(s_0)$ also lies in the same half-space bounded by $P_2$ as the tetrahedron $\Delta$. Therefore $T_2^{cb}(x,y;a,b,c,d)$ is a transversal length of $\Delta$.
\end{proof}

\subsection{Three horoballs} The final partially truncated case.

\begin{proposition}\label{tetra transversals three} Suppose $P_1$ is a plane in $\mathbb{H}^3$ bounding a half-space $H_1$, and that $B_2$, $B_3$ and $B_4$ are horoballs respectively defined by pairwise linearly independent vectors $\bx_2$, $\bx_3$ and $\bx_4$, such that each $\bx_i$ lies in the ideal boundary of $H_1$ but not of $P_1$, for $i>1$. Let $\tilde\lambda_{12}$ be the geodesic with ideal endpoint at $\bx_2$ that intersects $P_1$ perpendicularly, and let $\tilde\lambda_{34}$ be the geodesic with its ideal endpoints at $\bx_3$ and $\bx_4$. Fixing parametrizations $\tilde\lambda_{12}(s)$ and $\tilde\lambda_{34}(t)$ by arclength:\begin{enumerate} 
	\item the function $D(s,t)$ that records the hyperbolic cosine of the distance from $\tilde\lambda_{12}(s)$ to $\tilde\lambda_{34}(t)$ has a unique critical point $(s_0,t_0)$ in $\mathbb{R}^2$, at which it attains an absolute minimum;
	\item the absolute minimum value $D(s_0,t_0)$ depends only on the pairwise signed distances $h_{1j}$ from $P_1$ to the $B_j$ ($j\in\{2,3,4\}$), and $d_{jk}$ from $B_j$ to $B_k$, $j < k\in\{2,3,4\}$; 
	\item the points $\tilde\lambda_{12}(s_0)$ and $\tilde\lambda_{34}(t_0)$ lie in the partially truncated tetrahedron $\Delta$ determined by $P_1$ and the $B_j$ as in Definition \ref{one side prime}, so $D(s_0,t_0)$ is the hyperbolic cosine of a transversal length of $\Delta$; and
	\item written as $T_3(x,y;a,b;c,d)$, where $x = e^{h_{12}}$, $y = e^{d_{34}}$, $a = e^{h_{13}}$, $b = e^{h_{14}}$, $c = e^{h_{23}}$, and $d = e^{h_{24}}$, this transversal length is increasing in each of $a$, $b$, $c$, $d$ individually and invariant under the involution exchanging $a$ with $b$ and $c$ with $d$.
\end{enumerate}
Given explicitly, $\displaystyle{\cosh T_3(x,y;a,b;c,d) = \frac{1}{\sqrt{xy}} \left[ \sqrt{c\left(\frac{d}{x}+b\right)} + \sqrt{d\left(\frac{c}{x}+a\right)}. \right] }$
\end{proposition}

\begin{proof} Let $\by_1$ be an outward unit normal, in the sense described below Lemma \ref{space-like corresp}, to the half-space $H_1$ bounded by $P_1$ that contains the $\bx_j$ for $j=2,3,4$. By Lemma \ref{to the hyperplane!}, the geodesic $\tilde\lambda_{12}$ perpendicular to $P_1$ and in the direction of $\bx_2$ has the parametrization $\tilde\lambda_{12}(s) = e^{-h_{12}} \cosh s \bx_2 + e^{-s}\by_1$, where the signed distance $h_{12}$ from $P_1$ to $B_2$ satisfies $e^{h_{12}} = -\by_1\circ\bx_2$ (noting that $\by_1\circ\bx_2<0$, since $\bx_2$ is in the ideal boundary of $H_1$ by hypothesis). The hypothesis and Lemma \ref{to the hyperplane!} likewise imply that for any $j\in\{3,4\}$, the signed distance $h_{1j}$ from $P_1$ to $B_j$ satisfies $e^{h_{1j}} = -\by_1\circ\bx_j$,

By Lemma \ref{to the other horoball!}, for $j<k\in\{2,3,4\}$ the signed distance $d_{jk}$ from $B_j$ to $B_k$ satisfies $2e^{d_{jk}} = - \bx_j\circ\bx_k$. Furthermore, the geodesic $\tilde\lambda_{34}$ perpendicular to the horoballs $B_3$ and $B_4$ and pointing toward $\bx_3$ has parametrization $\tilde\lambda_{34}(t) = \frac{1}{2}e^{-d_{34}/2}\left(e^t\bx_3 + e^{-t}\bx_4\right)$. The hyperbolic cosine $D(s,t)$ of the distance between $\tilde\lambda_{12}(s)$ and $\tilde\lambda_{34}(t)$ therefore satisfies:\begin{align*}
	D(s,t) & = - \left( e^{-h_{12}} \cosh s \bx_2 + e^{-s}\by_1 \right) \circ \left(  \frac{1}{2}e^{-d_{34}/2}\left(e^t\bx_3 + e^{-t}\bx_4\right)\right) \\
		& = \frac{1}{2e^{d_{34}/2}}\left[ \cosh s e^t \frac{2e^{d_{23}}}{e^{h_{12}}} + \cosh s e^{-t} \frac{2e^{d_{24}}}{e^{h_{12}}} + e^{-s} e^t e^{h_{13}} + e^{-s}e^{-t} e^{h_{14}} \right] \\
		& = C_{++} e^se^t + C_{+-}e^se^{-t} + C_{-+} e^{-s}e^t + C_{--} e^{-s}e^{-t},
\end{align*}
where:\begin{align*}
	& C_{++} = \frac{e^{d_{23}}}{2e^{h_{12}}e^{d_{34}/2}} && C_{+-} = \frac{e^{d_{24}}}{2e^{h_{12}}e^{d_{34}/2}} && C_{-+} = C_{++} + \frac{e^{h_{13}}}{2e^{d_{34}/2}} && C_{--} = C_{+-} +  \frac{e^{h_{14}}}{2e^{d_{34}/2}} 
\end{align*}
Since each coefficient $C_{\pm\pm}$ is visibly positive, by Lemma \ref{or test sharc} $D(s,t)$ is convex and attains an absolute minimum at a unique critical point $(s_0,t_0)$. This establishes the present result's assertion (1). Assertion (2) follows from the description of the minimum value $D(s_0,t_0)$, in (\ref{crit loc}), and those of the coefficients $C_{\pm\pm}$ above. We note that $C_{--} > C_{+-}$ and $C_{-+} > C_{++}$, so by the descriptions in (\ref{crit loc}) we have $e^{s_0} > 1 \Rightarrow s_0 > 0$. Since $\tilde\lambda_{12}$ is parametrized pointing into $H_1$ and so that $\tilde\lambda_{12}(0) \in P_1$, this implies that $\tilde\lambda_{12}(s_0)$ lies in $H_1$ and hence in $\Delta$. Therefore $D(s_0,t_0)$ is the hyperbolic cosine of a transversal length of $\Delta$, confirming assertion (3).

The explicit formula for $T_3(x,y;a,b;c,d) = D(s_0,t_0)$ given in the present result's statement is obtained by directly applying the formula from (\ref{crit loc}) to the coefficients $C_{\pm\pm}$ computed above. Its properties listed in assertion (4) are visible from this formula.
\end{proof}

\subsection{Four horoballs} Our techniques also apply to the case of (fully) ideal tetrahedra.

\begin{proposition}\label{ideal tet trans} Let $\Delta\subset\mathbb{H}^3$ be the ideal tetrahedron determined by positive, pairwise-linearly independent light-like vectors $\bx_1$, $\bx_2$, $\bx_3$, $\bx_4$, and for each $i<j$ let $d_{ij}$ be the signed distance between the horoballs $B_i$ and $B_j$ determined by $\bx_i$ and $\bx_j$. Let $\tilde\lambda_{12}$ be the geodesic joining $\bx_1$ to $\bx_2$, and let $\tilde\lambda_{34}$ be the geodesic joining $\bx_3$ to $\bx_4$. Taking $x = e^{d_{12}}$, $y = e^{d_{34}}$, $a = e^{d_{13}}$, $b = e^{d_{14}}$, $c = e^{d_{23}}$, and $d = e^{d_{24}}$, the length of the transversal of $\Delta$ joining $\tilde\lambda_{12}$ to $\tilde\lambda_{34}$ is given by
\[ \cosh T_4(x,y;a,b,c,d) = \frac{\sqrt{ad} + \sqrt{bc}}{\sqrt{xy}}. \]
\end{proposition}

Here are a couple of sanity checks for this formula:\begin{itemize}
	\item In the case that all $\bx_i$ lie in a single plane, ie.~if the ideal tetrahedron they determine degenerates to a quadrilateral, in which the line through $\bx_1$ and $\bx_2$ separates $\bx_3$ from $\bx_4$, applying Penner's ``ideal Ptolemy theorem'' \cite[Prop.~2.6(a)]{Penner} to the formula gives $\cosh T_4(x,y;a,b,c,d) = 1$, so the value of $T_4$ is $0$. This is correct, reflecting that in this case the edges $\lambda_{12}$ and $\lambda_{34}$ are crossing diagonals.
	\item The transversal length of a regular ideal tetrahedron with a maximal, fully symmetric horoball packing at its vertices is determined by the formula as $\cosh T_4(1,1;1,1,1,1) = 2$. This can be confirmed by the angle of parallelism (see eg.~\cite[Th.~7.9.1(ii)]{Beardon}), since by symmetry considerations the transversal is the compact edge of a hyperbolic triangle with angles $0$, $\pi/2$, and $\pi/6$.\end{itemize}

\begin{remarks}\label{rescale horoball} Note that $T_4$ is invariant under any product of disjoint transpositions of its inputs $\{a,b,c,d\}$. As in prior results here, this reflects its insensitivity to swapping the label of $\bx_1$ with $\bx_2$ and/or of $\bx_3$ with $\bx_4$. 

Note that $T_4$ is \emph{also} invariant under transforming $(a,b,x)\mapsto t(a,b,x)$ for any $t\in(0,\infty)$. This records the effect on the inputs to $T_4$ of rescaling $\bx_1$; ie.~of making a different choice of horoball centered at the same ideal point $[\bx_1]$. Correspondingly, $T_4$ is invariant under transformations $(c,d,x)\mapsto u(c,d,x)$, $(a,c,y)\mapsto v(a,c,y)$, and $(b,d,y)\mapsto w(b,d,y)$ for any $u$, $v$, or $w\in(0,\infty)$.

The upshot of the previous paragraph is that while Proposition \ref{ideal tet trans} requires the input of a set of horoball neighborhoods in order to compute transversal length, its output is independent of the particular neighborhoods chosen. The prior results of this section also exhibit analogous invariance under horoball rescaling.\end{remarks}

\begin{proof} We parametrize $\tilde\lambda_{12}$ pointing toward $\bx_1$ as $\tilde\lambda_{12}(s) = \frac{1}{2}e^{-d_{12}/2}\left(e^s\bx_1 + e^{-s}\bx_2\right)$, and $\tilde\lambda_{34}$ pointing toward $\bx_3$ as $\tilde\lambda_{34}(t) = \frac{1}{2}e^{-d_{34}/2}\left(e^t\bx_3 + e^{-t}\bx_4\right)$, recalling Lemma \ref{to the other horoball!}. The hyperbolic cosine of the distance from $\tilde\lambda_{12}(s)$ to $\tilde\lambda_{34}(t)$ is given by:\begin{align*}
	D(s,t) & = -\frac{1}{2}e^{-d_{12}/2}\left(e^s\bx_1 + e^{-s}\bx_2\right) \circ \frac{1}{2}e^{-d_{34}/2}\left(e^t\bx_3 + e^{-t}\bx_4\right) \\
		& = \frac{1}{4e^{d_{12}/2}e^{d_{34}/2}}\left[ 2e^{d_{13}}e^s e^t + 2e^{d_{14}}e^se^{-t} + 2e^{d_{23}}e^{-s}e^t + 2e^{d_{24}}e^{-s}e^{-t}\right] 
\end{align*}
As the coefficients of the summands $e^se^t$, $e^se^{-t}$, $e^{-s}e^t$, and $e^{-s}e^{-t}$ are all positive, Lemma \ref{or test sharc} implies that $D(s,t)$ is strictly convex and attains an absolute minimum at a unique critical point $(s_0,t_0)$. Furthermore, the formula (\ref{crit loc}) gives the minimum value $D(s_0,t_0)$ as the quantity recorded as $T_4(x,y;a,b,c,d)$ above.\end{proof}

\bibliographystyle{plain}

\end{document}